\documentclass{jnsao}
\usepackage[utf8]{inputenc}
\usepackage[finnish,english]{babel}
\usepackage[svgnames]{xcolor}
\usepackage[textsize=footnotesize,color=DarkOrange!40]{todonotes}
\usepackage{hyperref}
\usepackage{tikz, pgfplots}
\usepgfplotslibrary{colorbrewer}
\usepackage[inline]{enumitem}
\usepackage{algpseudocode}
\usepackage[nameinlink,capitalise]{cleveref}
\usepackage{nicematrix}
\usepackage{booktabs}
\usepackage{pgfplotstable}

\makeatletter
\let\@the@H@page\relax
\makeatother

\definecolor{hrefcolor}{rgb}{0.0,0.4,0.7}
\definecolor{citecolor}{rgb}{0.0,0.35,0.2}
\definecolor{structure}{rgb}{0.09,0.09,0.44}
\hypersetup{
    colorlinks=true,
    linkcolor=citecolor,
    citecolor=citecolor,
    filecolor=hrefcolor,
    urlcolor=hrefcolor,
    pdfencoding=auto,
    hypertexnames=false,
}

\newtheorem{assumption}[definition]{Assumption}
\counterwithin{assumption}{section}

\crefname{assumption}{Assumption}{Assumptions}

\manuscriptcopyright{}
\manuscriptlicense{}

\usepackage{float}

\usepackage{xcolor,cancel}

\floatstyle{ruled}
\floatname{algorithm}{Algorithm}
\newfloat{algorithm}{tbp!}{loa}
\numberwithin{algorithm}{section}
\usepackage{changes-simple}

\tikzset{notestyleraw/.append style={align=justify}}

\usepackage{tikz}
\usepackage{pgfplots}
\usepgfplotslibrary{colorbrewer}

\def\ignorelegendentry#1{}
\pgfplotsset{
    every axis label/.append style = {font = \scriptsize},
    every tick label/.append style = {font = \scriptsize},
    ignore legend/.style={
        every axis legend/.code={}
    },
    log x ticks with fixed point/.style={
        xticklabel={
            \pgfkeys{/pgf/fpu=true}
            \pgfmathparse{exp(\tick)}%
            \pgfmathprintnumber[fixed relative, precision=3]{\pgfmathresult}
            \pgfkeys{/pgf/fpu=false}
        },
    },
    log y ticks with fixed point/.style={
        yticklabel={
            \pgfkeys{/pgf/fpu=true}
            \pgfmathparse{exp(\tick)}%
            \pgfmathprintnumber[fixed relative, precision=3]{\pgfmathresult}
            \pgfkeys{/pgf/fpu=false}
        },
    },
    compat=1.18,
    every axis/.append style={
        outer sep = 0pt,
        title style = {
            font = \normalsize
        },
        every x tick label/.append style={
          /pgf/number format/1000 sep={\ },
        },
        every y tick label/.append style={
          /pgf/number format/1000 sep={\ },
        }
    }
}

\author{%
    Jyrki Jauhiainen\thanks{%
        Department of Mathematics and Scientific Computing, University of Graz, Austria.
        \mbox{\email{jyrki.jauhiainen@uni-graz.at}},
        \orcid{0000-0001-6711-6997}
    }
    \and
    Yassine Nabou\thanks{%
        Department of Mathematics and Statistics, University of Helsinki, Finland.
        \mbox{\email{yassine.nabou@helsinki.fi}},
        \orcid{0009-0004-9805-8039}
    }
    \and
    Tuomo Valkonen\thanks{%
        MODEMAT Research Center in Mathematical Modeling and Optimization, Quito, Ecuador
        \emph{and}
        Department of Mathematics and Statistics, University of Helsinki, Finland.
        \email{tuomo.valkonen@iki.fi},
        \orcid{0000-0001-6683-3572}
    }
}

\shortauthor{J.~Jauhiainen, Y.~Nabou, T.~Valkonen}

\acknowledgements{%
    This research has been supported by the Academy of Finland grant 345486
    and the Special Research Area (SFB) “Mathematics of Reconstruction in Dynamical and Active Models” of the Austrian Science Fund (FWF).
}

\title{Dynamic inverse problems: Online regularisation theory}

\newcommand{\term}{\emph}

\newcommand{\field}[1]{\mathbb{#1}}
\newcommand{\N}{\mathbb{N}}

\newcommand{\R}{\field{R}}
\newcommand{\extR}{\overline \R}
\newcommand{\B}{B}

\newcommand{\norm}[1]{\|#1\|}

\newcommand{\inv}[1]{#1^{-1}}

\newcommand{\grad}{\nabla}
\newcommand{\freevar}{\,\boldsymbol\cdot\,}
\newcommand{\Union}\bigcup
\newcommand{\Isect}\bigcap
\newcommand{\union}\cup
\newcommand{\isect}\cap
\newcommand{\bigunion}\bigcup
\newcommand{\bigisect}\bigcap

\newcommand{\defeq}{:=}

\newcommand{\subdiff}{\partial}

\DeclareRobustCommand{\downto}{{{\mathchoice%
                    {\rotatebox[origin=c]{-20}{$\to$}}
                    {\rotatebox[origin=c]{-20}{$\to$}}
                    {\rotatebox[origin=c]{-20}{\scalebox{0.75}{$\to$}}}
                    {\rotatebox[origin=c]{-20}{\scalebox{0.6}{$\to$}}}
                }}}
\DeclareRobustCommand{\upto}{{{\mathchoice%
                    {\rotatebox[origin=c]{20}{$\to$}}
                    {\rotatebox[origin=c]{20}{$\to$}}
                    {\rotatebox[origin=c]{20}{\scalebox{0.75}{$\to$}}}
                    {\rotatebox[origin=c]{20}{\scalebox{0.6}{$\to$}}}
                }}}

\DeclareMathOperator*{\argmin}{arg\,min}

\DeclareMathOperator{\interior}{int}

\DeclareMathOperator{\closure}{cl}

\DeclareMathOperator{\Dom}{dom}

\DeclareMathOperator{\conv}{conv}

\DeclareMathOperator{\dist}{dist}

\newcommand{\iprod}[2]{\langle #1,#2\rangle}

\def\WOp{\Sigma^{-1/2}}
\def\EITmeas{\mathscr{I}}

\def \weaktostarSym{\setbox0=\hbox{$\rightharpoonup$}\rlap{\hbox
        to\wd0{\hss\raise1ex\hbox{$\scriptscriptstyle{*\,}$}\hss}}\box0}

\def\linear{\mathbb{L}}
\newcommand{\setto}{\rightrightarrows}

\def\extR{\overline \R}

\def\dualprod#1#2{\langle #1|#2\rangle}

\def\infconv{\mathop{\Box}}

\let\phi=\varphi
\let\epsilon=\varepsilon

\DeclareMathOperator{\Id}{Id}

\def\BVspace{\mathop{\mathrm{BV}}}

\def\this#1{#1^k}

\let\opt\hat

\def\estdiff#1#2{\widetilde{#1'_{\phantom{k\,}}}\!\!\!\!{\lower2pt\hbox{$_{#2}$}}}

\def\optx{\opt x}

\def\thisy{\this y}
\def\opty{\opt y}

\def \approxinSym{\setbox0=\hbox{$\in$}\rlap{\hbox
        to\wd0{\hss\raise1ex\hbox{$\sim$}\hss}}\box0}

\usepackage{ifthen}

\newcommand{\dis}[4][]{%
    d_{#2}%
    \ifthenelse{\equal{#1}{2}}{^2}{}%
    (#3, #4)%
}

\newcommand{\adaptdis}[4][]{%
    d_{#2}%
    \ifthenelse{\equal{#1}{2}}{^2}{}%
    \left(#3, #4\right)%
}

\newcommand{\trackingressum}[1][p]{\varsigma_p}
\newcommand{\tildetrackingressum}[1][p]{\tilde\varsigma_p}

\newcommand{\Urho}[2][\rho]{U_{#1}^{#2}}
\newcommand{\Ux}[2][\delta]{V_{#2,#1}}

\def\subdifffactor#1#2{\gamma_{#1,#2}}
\def\epsilonerror#1#2{\epsilon_{#1,#2}}
\def\boundconst#1#2{r_{#1,#2}}
\def\Rto#1{R_{:#1}}
\newcommand{\xto}[1]{x^{#1}}
\newcommand{\vto}[1]{v^{#1}}
\newcommand{\yto}[1]{y^{#1}}
\newcommand{\zto}[1]{z^{#1}}
\newcommand{\tildeyto}[1]{\tilde y^{#1}}
\newcommand{\tildeytoind}[2]{\tilde y^{#1}_{#2}}
\newcommand{\xtoind}[2]{x^{#1}_{#2}}
\newcommand{\ytoind}[2]{y^{#1}_{#2}}
\newcommand{\ztoind}[2]{z^{#1}_{#2}}
\newcommand{\Xto}[1]{X_{:#1}}
\newcommand{\Yto}[1]{Y_{:#1}}

\newcommand{\Kto}[1]{K_{:#1}}
\newcommand{\Ystarto}[1]{Y_{:#1}^*}
\newcommand{\barXto}[2]{\bar X_{:#1}^{#2}}
\newcommand{\optXto}[1]{\hat X_{:#1}}
\newcommand{\optxto}[1]{\hat x^{#1}}
\newcommand{\optxtostar}[1]{\hat x_{:#1}^*}

\newcommand{\xsolto}[2]{x^{#1}_{#2}}
\newcommand{\tildexsolto}[2]{\tilde x^{#1}_{#2}}
\newcommand{\optwto}[1]{\hat w^{#1}}
\newcommand{\Ato}[1]{A_{:#1}}
\newcommand{\bto}[2]{b^{#1}_{#2}}
\newcommand{\optbto}[1]{\hat b^{#1}}
\newcommand{\ellto}[1]{\ell_{:#1}}
\newcommand{\errto}[2]{e_{#1,#2}}
\newcommand{\Qto}[2]{Q_{:#1,#2}}
\newcommand{\Jto}[2]{J_{:#1,#2}}
\newcommand{\bregRto}[2]{B_{\Rto{N}}^{#2}}
\newcommand{\bregellto}[1]{B_{\ellto{N}}}
\def\DpredictConstrTo#1{\mathscr{Y}_{:#1}}
\newcommand{\Tto}[1]{T_{:#1}}
\newcommand{\ringGto}[1]{\mathring G_{:#1}}

\def\Npat{N_{\text{pat}}}
\def\Nelec{N_{\text{elec}}}

\begin{document}

\maketitle

\begin{abstract}
    We develop regularisation theory for dynamic inverse problems, solved using online methods with an infinite time horizon.
    Using concepts of subregularity to treat nonsmooth regularisers, we prove that time-averaged reconstruction errors converge to zero as noise, algorithmic errors, and regularisation vanish as the horizon grows. We illustrate the theory numerically with a dynamic electrical impedance tomography example.
\end{abstract}

\section{Introduction}
\label{sec:intro}

In dynamical inverse problems \cite{ip_special_issue_2028,holland2010reducing,hunt2014weighing,lipponen2011nonstationary,tuomov2024online-eit,alsaker2023multithreaded}, such as flow monitoring using electrical impedance tomography (EIT), one often considers a sequence of nonlinear inverse problems indexed by time. At each time step $k\in\N$, the parameter $x_k$ and the forward operator $A_k$ may change, and only noisy data $b_{k,\delta}$ are available. The underlying \term{infinite time horizon} operator equations take the form
\begin{equation}
    \label{eq:intro:infinite-time-horizon-operator-equation}
    (x_1, x_2\dots)\in\mathscr{X}
    \quad\text{and}\quad
    A_k(x_k) = \hat b_k
    \quad\text{for all}\quad k \ge 1,
\end{equation}
where the set $\mathscr{X}$ encodes a dynamic constraint.
It is practically impossible to solve this problem exactly: all of the data will never be available.
This naturally leads to the concept of an \term{online inverse problem}, where reconstructions must be updated sequentially as new data arrive.
To place this into context, consider first the classical \emph{static} inverse problem, where the operator and the data do not depend on time:
\[
    A(x) = \hat b,
\]
where again only noisy measurements $b_\delta$ of the exact data $\hat b$ are available.
A standard Tikhonov-type regularisation scheme reads
\begin{equation}
    \label{eq:static_problem}
    \min_{x\in X}
    J_{\delta}(x) + \alpha_\delta R(x),
    \qquad
    J_{\delta}(x) = \ell(A(x) - b_{\delta}),
\end{equation}
where $\ell$ is a data fidelity term and $R$ is a regularisation functional.
The central asymptotic question is whether minimisers $x_\delta$ converge, as $\delta \downto 0$, to an $R$-minimising solution of the exact equation, that is,
\[
    \hat x \in \argmin \{ R(x) \mid A(x) = \hat b \}.
\]
Classical convergence results rely on source conditions, Bregman divergences \cite{burger2004convergence,bredies2010total,artacho2013metric,ioffe2017variational,clasonvalkonen2020nonsmooth}, while we have in \cite{tuomov-subreg} exploited concepts of metric subregularity.
However, these theories are typically formulated for static problems with fixed operators and data.
In contrast, in the dynamic setting both the operators and the data evolve with time.
A natural, \emph{formal}, infinite time horizon minimum norm formulation is, therefore
\begin{equation}
    \label{eq:intro:infinite-time-horizon-minimal-norm}
    \min_{(x_1, x_2\dots)\in\mathscr{X}}~
    \sum_{j=1}^\infty R_j(x_j)
    \quad
    \text{subject to}
    \quad
    A_j(x_j)
    =
    \hat b_j
    \quad\text{for all}\quad
    j=1,2,\ldots.
\end{equation}
In the presence of noise, the corresponding Tikhonov-type problem becomes
\begin{equation}
    \label{eq:intro:infinite-time-horizon-tikhonov}
    \min_{(x_1, x_2\dots)\in\mathscr{X}}~
    \sum_{j=1}^\infty \ell_j\big(A_j(x_j) - b_{j,\delta}\big)
    +
    \sum_{j=1}^\infty \alpha_{j,\delta} R_j(x_j).
\end{equation}
This formulation highlights the structural differences between static and dynamic inverse problems and motivates the development of an online regularisation theory that extends classical convergence concepts to time-evolving operators and data.

\paragraph{Contributions}

In \cref{sec:dynamic-regtheory}, we develop \textbf{online regularisation theory} for dynamic inverse problems of the form \eqref{eq:intro:infinite-time-horizon-operator-equation}, where only noisy measurements $b_{k,\delta}$ of the exact data $\hat b_k$ are available.
Let  $\optxto{N}=(\hat x_0^N,\ldots,\hat x_N^N)$ solve an initial segment of length $N+1$ of \eqref{eq:intro:infinite-time-horizon-minimal-norm}, and let $\xsolto{N}{\delta}=(x_{\delta,0}^N,\ldots,x_{\delta,N}^N)$ \emph{approximately} solve the corresponding initial segment of \eqref{eq:intro:infinite-time-horizon-tikhonov}.
Denote the corresponding solution quality errors by $e_{k,\delta}$.
Under mild assumptions on the temporal evolution of the data and of the forward operators $A_k$, we prove that the average
\[
    \frac{\norm{\optxto{N}-\xsolto{N}{\delta}}^2}{N+1} \to 0
    \quad\text{as all}
    \quad
    \begin{cases}
        \delta \downto 0,        & \text{noise reduces,}
        \\
        \alpha_\delta \downto 0, & \text{regularisation reduces},
        \\
        N \upto \infty,          & \text{time advances},
        \\
        \lim_{\delta \to 0} \lim_{N \to \infty} \sum_{k=0}^{N} \frac{e_{k,\delta}}{\alpha_\delta(N+1)} = 0,
                                 & \text{solution quality increases}.
    \end{cases}
\]
This extends classical static regularisation theory
\cite{engl1996regularization,burger2004convergence,valkonen2021regularisation,schuster2012regularization}
to the time-dependent setting.
Other authors have studied regularisation methods for dynamic inverse problems \cite{sarnighausen2026regularization}, even with inexact solutions \cite{blanke2020inverse}, however, no works, before ours, appear to treat infinite time-horizon problems, and online regularisation methods.

As in \cite{valkonen2021regularisation}, the solution quality errors $e_{k,\delta}$, allow
incorporating information on how practical \emph{online optimisation} methods \cite{tuomov-better-predict,tuomov-predict,tuomov2024online-eit} solve \eqref{eq:intro:infinite-time-horizon-tikhonov} inexactly.
Using such methods from our companion paper \cite{online-tracking}, in \cref{sec:numerical}, we numerically illustrate the above regularisation theory on an example from dynamic EIT.
In these methods, we would have $\xsolto{N}{\delta} = (x_{\delta,0},\ldots,x_{\delta,N})$ for all $N \in \N$ for some $\{x_{\delta,k}\}_{k \in \N}$.
That is, the algorithms do not revisit history; they do not refine the solution for time index $k$ after first forming it.
Our theory applies to such methods, but equally to $\xsolto{N}{\delta}$ formed computationally more na\"ively by exactly solving initial segments of length $N$ of \eqref{eq:intro:infinite-time-horizon-tikhonov}.
Such approximate solutions refine history as $N \upto \infty$.
Even in the static case, our results relax some of the assumptions of \cite{valkonen2021regularisation}.

Our work is dependent on regularity concepts of set-valued maps, such as metric subregularity, which we recall in the next \cref{sec:infconvularity}.
In \cref{sec:infconv}, we study the metric subregularity of spatiotemporal infimal convolutions, with the intent of providing ways of verifying the conditions of the theory for regularisation terms arising from the algorithms of \cite{tuomov2024online-eit,online-tracking}.

\paragraph{An issue of time indexing}

We generally use a superscript $N$ to denote a vector $\xto{N}$ that contains information until time index $N \in \N$.
To avoid confusion with exponentiation, and to contrast with the notation $A_k$ for a single time index $k$, we use the notation $\Ato{N}$ to denote quantities indexed up to time $N$.
In particular, $\xto{N} \in \Xto{N}$.
We will in typical applications have
\[
    x^N := (x_0^N,\dots,x_N^N) \in \Xto{N}
    \quad\text{with}\quad
    \Xto{N} = \prod_{k=0}^N X_k,
\]
but the general theory does not require this.
Even when this structure holds, there is not necessarily any correspondence between $\xtoind{N}{k}$ and $\xtoind{M}{k}$ for $0 \le k \le \min\{N,M\}$. (See the note about revisiting history above.)
When we use the notation $\Ato{N}$, and each $A_k$ for $k=0,\ldots,N$ do exist, such a relationship is, however, implied.

\paragraph{Further notation and elementary results}

We denote the extended reals by $\extR \defeq [-\infty,\infty]$.
We write $H: X \setto Y$ when $H$ is a set-valued map from the space $X$ to $Y$.
We write $\mathbb{L}(X;Y)$ for the space of bounded linear operators between the normed spaces $X$ and $Y$.
On a normed space $X$, for a point $x \in X$ and a set $U \subset X$, we write $\dist(x, U) \defeq \inf_{x' \in U} \norm{x-x'}_X$, where $\norm{\freevar}_X$ is the norm on $X$. We also write $\dist^2(x, U) \defeq \dist(x, U)^2$.
We write $\iprod{x}{x'}$ for the inner product between two elements $x$ and $x'$ of a Hilbert space $X$, and $\dualprod{x^*}{x} \defeq x^*(x)$ for the dual product or dual pairing in a normed space.
We write $\Id: X \to X$ for the identity operator on $X$ and $\delta_A: X \to \extR$ for the $\{0,\infty\}$-valued indicator function of a set $A \subset X$.

For Fréchet differentiable $F:X \to R$, we write $F'(x) \in X^*$ for the Fréchet derivative at $x \in X$. Here $X^*$ is the dual space to $X$.
For a convex function $F: X \to \extR$, we write $\subdiff F: X \setto X^*$ for its (set-valued) subdifferential map. The Fenchel conjugate is
\begin{equation}
    \label{eq:intro:fenchel-conjugate}
    F^*; X^* \to \extR,
    \qquad
    F^*(x^*) \defeq \sup_{x \in X}\left( \dualprod{x^*}{x}_{X^*,X} - F(x)\right).
\end{equation}
We write $\Dom F \defeq \{ x \in X \mid F(x) < \infty \}$ for the effective domain.

For $X$ a Hilbert space, we will frequently use Pythagoras' three-point identity
\begin{equation}
    \label{eq:intro:three-point}
    \iprod{x-y}{x-z}_X = \frac{1}{2}\norm{x-y}_X^2 - \frac{1}{2}\norm{y-z}_X^2 + \frac{1}{2}\norm{x-z}_X^2
    \quad
    (x,y,z\in X)
\end{equation}
and (inner product) Young's inequality
\begin{equation}
    \label{eq:intro:young}
    \iprod{x}{y}
    \le
    \norm{x}_X\norm{y}_X \le
    \frac{1}{2\alpha}\norm{x}_X^2 + \frac{\alpha}{2}\norm{y}_X^2
    \quad(x,y \in X,\, \alpha>0).
\end{equation}

\section{Metric subregularity and local subdifferentiability}
\label{sec:infconvularity}

We briefly recall the main notions of (strong) metric subregularity following \cite{artacho2008characterization,artacho2013metric,tuomov-subreg}.
A set-valued mapping $H: X \setto Y$ is said to be \emph{metrically subregular} at $\optx$ for $\opty$ if $\opty \in H(\optx)$ and there exist neighbourhoods $U \ni \optx$, $V \ni \opty$, and a constant $\kappa>0$ such that
\[
    \dist(x, \inv{H}(\opty)) \le \kappa \dist(\opty, H(x) \isect V)
    \quad\text{for all}\quad
    x \in U.
\]
If, in addition, $\optx$ is an isolated point of $\inv{H}(\opty)$ and the stronger inequality
\[
    \norm{x-\optx}_X \le \kappa \dist(\opty, H(x) \isect V)
    \quad\text{for all}\quad
    x \in U
\]
holds, then $H$ is said to be \emph{strongly metrically subregular} at $\optx$ for $\opty$.

For convex functionals $F: X \to \extR$, the metric and strong metric subregularity of the subdifferential mapping $\subdiff F$ can be equivalently characterised in terms of local quadratic growth conditions \cite{artacho2008characterization}. In particular,
\begin{align}
    \label{eq:subreg:local:strong:0}
    F(x) & \ge F(\optx) + \dualprod{\optx^*}{x-\optx} + \gamma \norm{x-\optx}_X^2
    \quad\text{for all}\quad
    x \in U
    \quad\text{and}
    \\
    \label{eq:subreg:local:semistrong:0}
    F(x) & \ge F(\optx) + \dualprod{\optx^*}{x-\optx} + \gamma \dist^2(x, \inv{[\subdiff F]}(\optx^*))
    \quad\text{for all}\quad
    x \in U,
\end{align}
correspond, respectively, to \emph{strong metric subregularity} and \emph{metric subregularity}
of $\subdiff F$ \emph{at $\optx$ for $\optx^* \in \subdiff F(\optx)$}, with the factor $\gamma>0$ in the neighbourhood $U$ of $\optx$.
The first condition~\eqref{eq:subreg:local:strong:0} expresses a localised version of strong subdifferentiability (equivalent to strong convexity in Hilbert spaces), ensuring isolated minimisers and quadratic stability.
By Taylor expansion, the inequality \eqref{eq:subreg:local:strong:0} holds even for nonconvex $F$ that is twice continously differentiable.
The second condition \eqref{eq:subreg:local:semistrong:0} squeezes the set with the same subderivative---in particular, the set of minimisers---into a single point.

For use in our regularisation theory, we extend these concepts to be with respect to an arbitrary seminorm $\norm{\freevar}_\circ$ on $X$, and to incorporate an inexactness $\epsilon \ge 0$.
We also employ the concepts to nonconvex functions.
Thus, we call $F: X \to \extR$, $\norm{\freevar}_\circ$-\term{strongly locally $\epsilon$-subdifferentiable} at $\optx \in X$ for an $\optx^* \in X^*$, if there exists a $\gamma>0$ and a neighbourhood $U$ of $\optx$ such that
\begin{align}
    \label{eq:subreg:local:strong}
    F(x) & \ge F(\optx) + \dualprod{\optx^*}{x-\optx} + \gamma \norm{x-\optx}_\circ^2 - \epsilon
    \quad\text{for all}\quad
    x \in U.
    \\
    \intertext{It is $\norm{\freevar}_\circ$-\term{semi-strongly locally $\epsilon$-subdifferentiable} at $\optx \in X$ for $\optx^* \in X^*$ if}
    \label{eq:subreg:local:semistrong}
    F(x) & \ge F(\optx) + \dualprod{\optx^*}{x-\optx} + \gamma \dist_\circ^2(x, \inv{[\subdiff F]}(\optx^*)) - \epsilon
    \quad\text{for all}\quad
    x \in U,
\end{align}
where
\[
    \dist_\circ(x, \hat X) \defeq \inf_{\tilde x \in \hat X} \norm{x-\tilde x}_\circ
    \quad\text{for any}\quad \hat X \subset X.
\]
We will omit the mention of $\epsilon$ if it is zero.
In practise, as we will see in \cref{sec:infconv}, $\epsilon$ will be proportional to the noise level $\delta$.

We also introduce a variant more suitable for use with non-linear Fréchet differentiable forward-operator $A: X \to Y$.
Extending the defnition of \cite{tuomov-subreg} to semi-norms, we then call $F: X \to \extR$ $(A, \norm{\freevar}_\circ)$-\emph{semi-strongly locally $\epsilon$-subdifferentiable} at $\optx \in X$ for an $\optx^* \in X^*$ with respect to a set $\opt X \subset X$ if there exist a neighbourhood $U \ni \optx$ and a factor $\gamma>0$ such that
\begin{equation}
    \label{eq:subreg:strong-subdiff}
    F(x) \ge F(\optx) + \dualprod{\optx^*}{x-\optx}
    + \gamma \norm{A'(\optx)(x -\optx)}_Y^2
    + \gamma \dist_\circ^2(x, \opt X)
    - \epsilon
    \quad (x \in U).
\end{equation}
This condition interpolates between \cref{eq:subreg:local:semistrong,eq:subreg:local:strong} in settings where $F$ may only exhibit growth in certain subspaces (captured by $A$) and stability with respect to a possibly non-singleton set of ground-truths $\opt X$.
This property plays a key role in establishing convergence and stability results for dynamic and online regularisation methods in the subsequent sections.

\begin{remark}
    Throughout \cref{sec:dynamic-regtheory}, the semi-norm distance $(x, \optx) \to \norm{x-\optx}_\circ$ could be extended to an arbitrary non-negative function; its structure plays no particular role there.
    In \cref{sec:infconv}, we will minimally exploit the specific structure, but that section could also be extended to arbitrary functions if we were content with less explicit results.
\end{remark}

\section{Dynamic regularisation theory}
\label{sec:dynamic-regtheory}

We now develop a subregularity-based regularisation theory for dynamic inverse problems and their online solution methods.
Our proofs do not require the precise sum-of-data-fidelities-in-product-spaces structure of the introduction, so to reduce notational burden, we slightly relax the structure.

We assume that at each time step $N \in \N$, encompassing the entire history up to that time step, the uncorrupted data $\optbto{N} \in \Yto{N}$ is generated from a ground-truth $\optxto{N} \in \Xto{N}$ through the ideal model
\begin{equation}
    \label{eq:dynamic-regtheory:ideal-problem}
    \Ato{N}(\optxto{N}) = \optbto{N},
\end{equation}
where $\Xto{N}$ is a normed space, and $\Yto{N}$ a Hilbert space.
We require that the forward operator $\Ato{N} \in C^1(\Xto{N}; \Yto{N})$.
The true data is not available to us. Instead, we only observe a corrupted measurement
\[
    \bto{N}{\delta} \in \Yto{N}.
\]
Our goal, therefore, is not to solve \eqref{eq:dynamic-regtheory:ideal-problem} at each time step, but rather to track the unknown trajectory $\optxto{N}$ in an evolving, online fashion.
To this end, for the finite but advancing time horizon $N$, we consider inexact, sequentially constructed, solutions $\xsolto{N}{\delta}$ to the regularised cumulative reconstruction problem
\begin{subequations}
    \label{eq:reg:problem:regularised}
    \begin{equation}
        \label{eq:reg:problem:regularised:problem}
        \min_{\xto{N} \in \Xto{N}}
        \;\Jto{N}{\delta}(\xto{N})
        \;+\;
        \alpha_\delta\,\Rto{N}(\xto{N}),
    \end{equation}
    where $\Rto{N}: \Xto{N} \to \extR$ is a convex spatiaotemporal regulariser, $\alpha_\delta>0$ a regularisation parameter corresponding to the corruption level $\delta>0$, and
    \begin{equation}
        \label{eq:reg:problem:regularised:j}
        \Jto{N}{\delta}(\xto{N})
        =
        \ellto{N}(\Ato{N}(\xto{N}) - \bto{N}{\delta})
    \end{equation}
    for some data fidelity function $\ellto{N}: \Yto{N} \to \R$.
    The regulariser $\Rto{N}$ can be used to incorporate the dynamic constraint $\mathscr{X}$ from \eqref{eq:intro:infinite-time-horizon-tikhonov} (upto time index $N$).
\end{subequations}

The next example explains how, in the typical case, the unknowns and observations encompass the whole history.

\begin{example}[Simple sequential observations]
    \label{ex:reg:simple}
    For a normed space $X$ of unknowns, and a Hilbert space $Y$ of measurements (often $\R^n$), let us be given  $A_k \in C^1(X; Y)$, a data fidelity function $\ell: Y \to \R$, and corrupted measurements $b_{k,\delta}$ of $A_k \hat x_k = \hat b_k$.
    For
    \[
        \xto{N} =(\xtoind{N}{0},\ldots,\xtoind{N}{N}) \in \Xto{N} \defeq X^{N+1}
        \quad\text{and}\quad
        \yto{N} = (\ytoind{N}{0},\ldots,\ytoind{N}{N}) \in \Yto{N} \defeq Y^{N+1},
    \]
    we then set
    \[
        \Ato{N}(\xto{N}) = (A_0(\xtoind{N}{0}), \ldots, A_N(\xtoind{N}{N}))
        \quad\text{and}\quad
        \ellto{N}(\yto{N}) = \sum_{k=0}^N \ell(\ytoind{N}{k})
    \]
    as well as
    \[
        \bto{N}{\delta} = (b_{0,\delta},\ldots,b_{N,\delta)}
        \quad\text{and}\quad
        \optbto{N} = (\hat b_{0},\ldots,\hat b_{\delta}).
    \]
    Then \eqref{eq:reg:problem:regularised:j} gives
    \[
        \Jto{N}{\delta}(\xto{N})
        =
        \sum_{k=0}^{N} \ell_k(A_k(x_k) - b_{k,\delta}).
    \]
\end{example}

The regulariser $\Rto{N}$ will typically have a more complex structure:

\begin{example}[Spatiotemporal regularisers arising from online algorithms]
    \label{ex:reg:r}
    For some linear operators $K_k \in \linear(X_k; Y_k)$, ($k=0,\ldots,N$), write
    \[
        \Kto{N}\xto{N} \defeq (K_{0} \xtoind{N}{0}, \ldots, K_N \xtoind{N}{N}).
    \]
    Also let $T_k: Y^k \to \extR$ be convex, proper, and lower semicontinuous.
    In the online algorithms of \cite{tuomov2024online-eit,online-tracking}, the spatiotemporal regulariser $\Rto{N}$ arises as the specific instance\footnotemark[1]
    \[
        \Rto{N}(\xto{N})
        = \mathring T_{:N}(\Kto{N} \xto{N})
    \]
    where, for an initial slice $\DpredictConstrTo{N} \subset \Yto{N}$ of a \emph{dual} dynamic constraint,\footnotemark[2]
    \[
        \mathring T_{:N}(\yto{N})
        \defeq
        \sup_{\tildeyto{N} \in \DpredictConstrTo{N}}
        \sum_{k=1}^N
        \bigl[
            \iprod{\ytoind{N}{k}}{\tildeytoind{N}{k}} - T_k^*(\tildeytoind{N}{k})
            \bigr].
    \]
    If $\DpredictConstrTo{N}$ were convex, we would have \cite{tuomov-better-predict} the infimal convolution
    \[
        \mathring T_{:N}(\yto{N}) = [\Tto{N} \infconv \delta_{\DpredictConstrTo{N}}^*](\yto{N})
        \defeq \inf_{\tildeyto{N}} \left(
        \Tto{N}(\yto{N} - \tildeyto{N}) + \delta_{\DpredictConstrTo{N}}^*(\tildeyto{N})
        \right)
    \]
    where
    \[
        \Tto{N}(\yto{N}) \defeq \sum_{k=1}^{N} T_{k}(\inv\eta_k \thisy)
        \quad\text{and}\quad
        \delta_{\DpredictConstrTo{N}}^*(\yto{N}) = \sup_{\tildeyto{N} \in \DpredictConstrTo{N}} \iprod{\yto{N}}{\tildeyto{N}}.
    \]
    In typical applications of interest to us, each $T_k=\norm{\freevar}_{2,1}$ in a suitable finite element space, and $K_k=\grad$, so that $T_k \circ K_k$ models an isotropic total variation regulariser for the frame $k$.
    We recall that $\norm{\freevar}_{2,1}$ models the global $L^1$-norm of vector-valued functions with pointwise $2$-norms, and $B_{2,\infty}$ is the corresponding dual ball.
    Thus $\Rto{N}$ would be a spatiotemporal infimal convolution of the (dual) dynamic constraint, and the framewise total variation regularisers.
    When $\DpredictConstrTo{N}$ is not convex, we instead get a “sub-infimal” convolution: we have $\mathring T_{:N} \le \Tto{N} \infconv \delta_{\DpredictConstrTo{N}}^*$.
    We will in \cref{ex:subreg:r-improvement} show that equality actually holds here.
\end{example}

\footnotetext[1]{%
    To be more precise, in the algorithmic works \cite{tuomov2024online-eit,online-tracking}, where the corruption level $\delta$ plays no special role, one works instead $T_k$ with $G_k=\inv\alpha_\delta T_k$ that already incporate the regalarisation parameter.
    Then $G_k^*=\alpha_\delta T_k^*(\freevar/\alpha_\delta)$ \cite[Lemma 5.8]{clasonvalkonen2020nonsmooth}, and one defines $\ringGto{N}$ analogously to $\mathring T_{:N}$.
    For  $T_k=\norm{\freevar}_{2,1}$, in particular,  $G_k^*=\delta_{\alpha_\delta B_{2,\infty}}$.
    Hence the dual variables are scaled by $\alpha_\delta$.
    Correspondingly scaling the dual dynamic constraint $\DpredictConstrTo{N}$ by $\alpha_\delta$, it follows that $\ringGto{N} = \alpha_\delta \mathring T_{:N}$, so $\alpha_\delta\Rto{N}(\Xto{N}) = \ringGto{N}$.
}

\footnotetext[2]{In \cite[Corollary 2.13]{tuomov2024online-eit}, the definition of $\ringGto{N}$ has had the order of the sum and the supremum interchanged. The correct definition is found before \cite[Theorem 2.6]{tuomov-better-predict}, on which the former is based.}

We start with our general assumptions and constructions in \cref{sec:reg:general}.
We then provide a basic Bregman estimate with minimal assumptions in \cref{sec:reg:bregman}.
To provide norm-convergence to a specific minimum-$\Rto{N}$ solution of the noise-free problem, we consider in \cref{sec:reg:strong} a \term{strong source condition}.
We relax this in \cref{sec:reg:semistrong} to \term{semi-strong source condition} that gives convergence to the \emph{set of} minimum-$\Rto{N}$ solutions.

\subsection{General assumptions and concepts}
\label{sec:reg:general}

Let $N \in \N$. We denote by $\optXto{N} \subset \Xto{N}$ the set of $\Rto{N}$-minimising solutions
\begin{equation}
    \label{eq:reg:problem:truth:regularised}
    \optXto{N} \defeq \argmin_{x\in \Xto{N}}~ \Rto{N}(x)
    \quad \text{subject to} \quad \Ato{N} x=\optbto{N}
\end{equation}
We can derive necessary optimality conditions for this problem through the theory of Clarke subdifferentials \cite[Section 2.9]{clarke1990optimization}.
Since $\Ato{N} \in C^1(\Xto{N}; \Yto{N})$ and $\Rto{N}$ is convex and lower semicontinuous, both the functional $x \mapsto \Rto{N}(x)$ and the indicator function of the constraint set in \eqref{eq:reg:problem:truth:regularised} are regular in the sense of Clarke.\footnote[3]{That is, for locally Lipschitz functions the Clarke directional derivative $f^\circ$ agrees with the standard directional derivative $f'$, which is assumed to exist; see \cite[Definitions 2.3.4]{clarke1990optimization}. For infinite-valued functions, the Clarke tangent cone and the contingent cone of the epigraph have to agree \cite[Definitions 2.4.10]{clarke1990optimization}.}
Consequently, the sum and chain rules for the generalized gradient apply as equalities \cite[Thms.~2.9.8--2.9.9]{clarke1990optimization}.
The Fermat principle yields the desired optimality condition, known in the inverse problems literature as a source condition:

\begin{assumption}[Basic source condition]
    \label{ass:reg:basic-sc}
    For given $N \ge 0$ and $\optxto{N} \in \optXto{N}$, there exists $\optwto{N} \in \Ystarto{N}$ such that
    \begin{equation}
        \label{eq:reg:basic-sc}
        \Ato{N}(\optxto{N}) =  \optbto{N}
        \quad\text{and}\quad
        - \Ato{N}'(\optxto{N})^*\optwto{N} \in \subdiff  \Rto{N}(\optxto{N}).
    \end{equation}
\end{assumption}

For corruption levels $\delta>0$, we also require inexact regularised solutions that satisfy certain quality guarantees:

\begin{assumption}
    \label{ass:reg:general}
    For a given $N \ge 0$, let $\Xto{N}$ be a normed space and $\Yto{N}$ a Hilbert space, and the regularisation functional $\Rto{N}: \Xto{N} \to \extR$ convex, proper, and lower semicontinuous. Moreover, let $\Ato{N} \in C^1(\Xto{N}; \Yto{N})$, and let $\ellto{N} \in C^1(\Yto{N})$.
    For all corruption levels $\delta \in (0, \delta^*)$ for some $\delta^*>0$, we then require:
    \begin{enumerate}[label=(\roman*)]
        \item \label{item:reg:general:noise}
              \textbf{Noise levels:}
              The corrupted measurements $\bto{N}{\delta} \in \Yto{N}$ of the ground-truth $\optbto{N} \in \Yto{N}$ satisfy for some fixed constants $C'>0$ and $q>0$
              \begin{equation}
                  \label{eq:reg:general:noise}
                  \frac{1}{N+1} \norm{\ellto{N}'(\bto{N}{\delta}-\optbto{N})}^2_{\Yto{N}^*} \le \delta
                  \quad\text{and}\quad
                  \frac{1}{N+1} \ellto{N}(\bto{N}{\delta}-\optbto{N}) \leq C' \delta^q.
              \end{equation}
        \item\label{item:reg:general:accuracy}
              \textbf{Accuracy of solution of Tikhonov-type problems:}
              We are given $\xsolto{N}{\delta} \in \Xto{N}$ that, for some regularisation parameter $\alpha_\delta>0$, minimise
              \[
                  \Qto{N}{\delta}(\xto{N})
                  \defeq
                  \Jto{N}{\delta}(\xto{N}) + \alpha_\delta \Rto{N}(\xto{N})
                  \quad\text{where}\quad
                  \Jto{N}{\delta}(\xto{N}) \defeq \ellto{N}(\Ato{N}(\xto{N})-\bto{N}{\delta})
              \]
              to an accuracy $\errto{N}{\delta} \ge 0$ in the sense that, for some (and hence any) minimum-$\Rto{N}$ solution $\optxto{N} \in \optXto{N}$, we have
              \begin{equation}
                  \label{eq:reg:general:accuracy}
                  \Qto{N}{\delta}(\xsolto{N}{\delta})
                  \le
                  \Qto{N}{\delta}(\optxto{N})
                  +
                  \errto{N}{\delta}.
              \end{equation}
        \item \label{item:reg:general:quadratic}
              \textbf{Quadratic bounds:}
              For some $C>0$, we have
              \begin{equation}
                  \label{eq:reg:general:quadratic}
                  \ellto{N}(\Ato{N}(\xsolto{N}{\delta}) - \Ato{N}(\optxto{N})) \le C\left(\ellto{N}(\Ato{N}(\xsolto{N}{\delta}) - \bto{N}{\delta}) + \norm{\ellto{N}'(\bto{N}{\delta} - \optbto{N})}^2 \right).
              \end{equation}
    \end{enumerate}
\end{assumption}

\begin{remark}[Quality of algorithmic solutions]
    In \eqref{eq:reg:general:accuracy}, we compare the (typically algorithm-construted) approximate solution $\xsolto{N}{\delta}$ for the corruption level $\delta>0$ against a minimum-$\Rto{N}$ solution $\optxto{N}$ for corruption-free data. We could, instead, compare it against a minimiser  $\tildexsolto{N}{\delta}$ of $\Qto{N}{\delta}$, i.e., the best regularised solution for the problem with corruption level $\delta$ and finite time horizon $N$.
    However, the comparison against the ground-truth is, by definition, more permissive, and sufficient for our purposes.
\end{remark}

\begin{remark}[Quadratic bound]
    The bound \eqref{eq:reg:general:quadratic} is variant of the pseudo-Hölder estimate of \cite[§5]{valkonen2021regularisation}.
    If $\ellto{N}=\frac{1}{2}\norm{\freevar}^2$, we have
    \[
        \begin{split}
            \ellto{N}(\Ato{N}(\xsolto{N}{\delta}) - \optbto{N}) - \ellto{N}(\Ato{N}(\xsolto{N}{\delta}) - \bto{N}{\delta})
             &
            =
            \iprod{\Ato{N}(\xsolto{N}{\delta})}{\bto{N}{\delta}-\optbto{N}}_{\Yto{N}}
            +
            \frac{1}{2}\norm{\optbto{N}}_{\Yto{N}}^2 - \frac{1}{2}\norm{\bto{N}{\delta}}_{\Yto{N}}^2
            \\
             &
            =
            \iprod{\Ato{N}(\xsolto{N}{\delta})_{\Yto{N}}-\bto{N}{\delta}}{\bto{N}{\delta}-\optbto{N}}_{\Yto{N}}
            +
            \frac{1}{2}\norm{\optbto{N}-\bto{N}{\delta}}_{\Yto{N}}^2
            \\
             &
            \le
            \ellto{N}(\Ato{N}(\xsolto{N}{\delta}) - \bto{N}{\delta}) + \norm{\optbto{N}-\bto{N}{\delta}}_{\Yto{N}}^2,
        \end{split}
    \]
    where in the last step we have used Young's inequalty.
    This verifies \eqref{eq:reg:general:quadratic}.
\end{remark}

\begin{remark}[Linearisation]
    Recall that  $\ellto{N}': \Yto{N} \to \Ystarto{N}$ is the Fr\'echet derivative of $\ellto{N}$, and $\bregellto{N}(y,y') \defeq \ellto{N}(y) - \ellto{N}(y') - \dualprod{\ellto{N}'(y')}{y-y'}$ is the Bregman divergence generated by $\ellto{N}$.
    The condition \cref{eq:reg:general:linearisation} is satisfied, for instance, when $\ellto{N}(z)=\frac{1}{2}\norm{z}_{\Yto{N}}^2$ is quadratic and the forward operator $\Ato{N}$ is linear (e.g., \cite{valkonen2021regularisation}).
    A static version of \cref{eq:reg:general:quadratic} has also been considered in \cite{valkonen2021regularisation} when $\ell$ is quadratic. Thus, our setting allows for more general data fidelity terms.
\end{remark}

\subsection{A basic Bregman estimate}
\label{sec:reg:bregman}

As a starting point for our analysis, we derive preliminary bounds on the distance of $\xsolto{N}{\delta}$ to $\optxto{N}$ in terms of Bregman divergences.
These results extend the work of \cite{valkonen2021regularisation,burger2004convergence} to the online setting.
Minding the basic source condition of \cref{ass:reg:basic-sc}, we define the Bregman divergence generated by $\Rto{N}$ at $\optxto{N}$ for $v \in\partial \Rto{N}(\optxto{N})$ by
\[
    \bregRto{N}{v}(\xsolto{N}{\delta}, \optxto{N}) := \Rto{N}(\xsolto{N}{\delta}) - \Rto{N}(\optxto{N}) + \dualprod{v}{x_{ \delta}^N - \optxto{N}}_{\Xto{N}^*,\Xto{N}}.
\]
We also define the Bregman divergence generated by $\ellto{N}$,
\[
    \bregellto{N}(y,\bar y)
    := \ellto{N}(y) - \ellto{N}(\bar y) - \dualprod{\ellto{N}'(\bar y)}{y-\bar y}_{\Yto{N}^*,\Yto{N}}.
\]
For the basic estimates of this subsection, in addition to \cref{ass:reg:basic-sc,ass:reg:general}, we require the the following approximate linearity assumption.

\begin{assumption}[Approximate linearised data fidelity condition]
    \label{ass:reg:general:linearisation}
    There exist $\eta > 0$ such that
    \begin{equation}
        \label{eq:reg:general:linearisation}
        \begin{split}
             & \dualprod{\ellto{N}'(\optbto{N} - \bto{N}{\delta})}{\Ato{N}(\xsolto{N}{\delta}) - \Ato{N}(\optxto{N}) - \Ato{N}'(\optxto{N})(\xsolto{N}{\delta} - \optxto{N})}_{\Yto{N}}
            \\
             & \qquad + \bregellto{N}(\Ato{N}(\xsolto{N}{\delta}) - \bto{N}{\delta}, \Ato{N}(\optxto{N}) - \bto{N}{\delta})
            \geq \eta \norm{\Ato{N}'(\optxto{N})(\xsolto{N}{\delta} - \hat x_k^N)}_{\Yto{N}}^2
        \end{split}
    \end{equation}
    for all $N \in \N$ and $\delta \in (0, \delta^*)$.
\end{assumption}

\begin{remark}[Approximate linearity]
    For $\ellto{N}(z)=\frac{1}{2}\norm{z}_{\Yto{N}}^2$, \eqref{eq:reg:general:linearisation} reads
    \begin{equation}
        \label{eq:reg:general:linearisation:quadratic}
        \begin{split}
             & \dualprod{\optbto{N} - \bto{N}{\delta}}{\Ato{N}(\xsolto{N}{\delta}) - \Ato{N}(\optxto{N}) - \Ato{N}'(\optxto{N})(\xsolto{N}{\delta} - \optxto{N})}_{\Yto{N}}
            \\
             & \qquad + \frac{1}{2}\norm{\Ato{N}(\xsolto{N}{\delta}) - \Ato{N}(\optxto{N})}_{\Yto{N}}^2
            \geq \eta \norm{\Ato{N}'(\optxto{N})(\xsolto{N}{\delta} - \hat x_k^N)}_{\Yto{N}}^2
            \quad
            \text{for all}
            \quad
            N \in \N.
        \end{split}
    \end{equation}
    This condition was used in \cite{valkonen2021regularisation} for conventional static regularisation.
    For the quadratic data fidelity, it follows that \cref{ass:reg:general:linearisation} holds, for instance, when the forward operator $\Ato{N}$ is linear.
\end{remark}

\begin{example}
    Suppose $\Ato{N}$ is twice continuously differentiable, and $\gamma M \le \Ato{N}'(\xto{N})^*\Ato{N}'(\xto{N}) \le C M$ for all $\xto{N} \in \Xto{N}$ and some $C,\gamma>0$ and a positive semi-definite operator $M$.
    Then, using the mean value theorem to write $\Ato{N}(\xsolto{N}{\delta}) - \Ato{N}(\optxto{N})=\Ato{N}'(\zeta)(\xsolto{N}{\delta} - \optxto{N})$ for some $\zeta \in [\xsolto{N}{\delta}, \optxto{N}]$, it is not difficult to see that \eqref{eq:reg:general:linearisation:quadratic} holds when $\optbto{N} - \bto{N}{\delta}$ is small, i.e., $\delta>0$ is small.
\end{example}

We now have the following bound.

\begin{theorem}
    \label{thm:reg:bregman}
    Suppose that \cref{ass:reg:basic-sc,ass:reg:general,ass:reg:general:linearisation} hold for a $\optxto{N} \in \optXto{N}$.
    If $\delta \in (0, \delta^*)$, then
    \[
        0 \le\frac{1}{N+1}\bregRto{N}{\hat v}(\xsolto{N}{\delta} , \optxto{N})
        \leq
        \frac{\delta}{2\eta\alpha_\delta}
        +
        \frac{\errto{N}{\delta}}{\alpha_\delta(N+1)}
        + \frac{\alpha_\delta}{2\eta(N+1)}\norm{\optwto{N}}_{\Yto{N}}^2,
    \]
    where we write
    $
        \hat v \defeq - \Ato{N}'(\optxto{N}) \optwto{N}.
    $
\end{theorem}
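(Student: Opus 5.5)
The plan is the classical Burger–Osher-type argument: combine the accuracy condition \eqref{eq:reg:general:accuracy} with the source condition \eqref{eq:reg:basic-sc}, use \cref{ass:reg:general:linearisation} to produce a negative quadratic term in $\Ato{N}'(\optxto{N})h$, and absorb the cross terms by Young's inequality. Write $h \defeq \xsolto{N}{\delta} - \optxto{N}$. Nonnegativity of the Bregman divergence is immediate from $\hat v \in \subdiff \Rto{N}(\optxto{N})$ and convexity of $\Rto{N}$. Here $\hat v$ is understood as $-\Ato{N}'(\optxto{N})^*\optwto{N}$, and the divergence as the usual one, $\Rto{N}(\xsolto{N}{\delta}) - \Rto{N}(\optxto{N}) - \dualprod{\hat v}{h}$. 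With this sign convention,
\[
    \bregRto{N}{\hat v}(\xsolto{N}{\delta},\optxto{N}) = \Rto{N}(\xsolto{N}{\delta}) - \Rto{N}(\optxto{N}) + \dualprod{\optwto{N}}{\Ato{N}'(\optxto{N})h}.
\]

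First, I rewrite \eqref{eq:reg:general:accuracy} as
\[
    \alpha_\delta\bigl(\Rto{N}(\xsolto{N}{\delta}) - \Rto{N}(\optxto{N})\bigr) \le \errto{N}{\delta} - \bigl(\Jto{N}{\delta}(\xsolto{N}{\delta}) - \Jto{N}{\delta}(\optxto{N})\bigr).
\]
Next I expand the fidelity difference using $\Ato{N}(\optxto{N}) = \optbto{N}$ and the definition of $\bregellto{N}$:
\[
    \Jto{N}{\delta}(\xsolto{N}{\delta}) - \Jto{N}{\delta}(\optxto{N})
    = \bregellto{N}\bigl(\Ato{N}(\xsolto{N}{\delta}) - \bto{N}{\delta}, \optbto{N} - \bto{N}{\delta}\bigr)
    + \dualprod{\ellto{N}'(\optbto{N} - \bto{N}{\delta})}{\Ato{N}(\xsolto{N}{\delta}) - \Ato{N}(\optxto{N})}.
\]
In the inner product I add and subtract $\Ato{N}'(\optxto{N})h$. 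The nonlinear remainder together with the Bregman term is then exactly the left-hand side of \eqref{eq:reg:general:linearisation}. Hence
\[
    \Jto{N}{\delta}(\xsolto{N}{\delta}) - \Jto{N}{\delta}(\optxto{N}) \ge \eta\norm{\Ato{N}'(\optxto{N})h}^2 + \dualprod{\ellto{N}'(\optbto{N}-\bto{N}{\delta})}{\Ato{N}'(\optxto{N})h}.
\]

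Adding $\alpha_\delta\dualprod{\optwto{N}}{\Ato{N}'(\optxto{N})h}$ to both sides gives
\[
    \alpha_\delta \bregRto{N}{\hat v}(\xsolto{N}{\delta},\optxto{N})
    \le \errto{N}{\delta} - \eta\norm{\Ato{N}'(\optxto{N})h}^2 + \dualprod{\alpha_\delta\optwto{N} - \ellto{N}'(\optbto{N}-\bto{N}{\delta})}{\Ato{N}'(\optxto{N})h}.
\]
I apply Young's inequality \eqref{eq:intro:young} to the last term with the weight chosen so that the quadratic part is exactly $\eta\norm{\Ato{N}'(\optxto{N})h}^2$, which cancels. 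This leaves $\frac{1}{4\eta}\norm{\alpha_\delta\optwto{N} - \ellto{N}'(\optbto{N}-\bto{N}{\delta})}^2$. Then $\norm{a+b}^2\le 2\norm{a}^2+2\norm{b}^2$ bounds this by $\frac{1}{2\eta}\bigl(\alpha_\delta^2\norm{\optwto{N}}^2 + \norm{\ellto{N}'(\optbto{N}-\bto{N}{\delta})}^2\bigr)$. Finally I divide by $\alpha_\delta(N+1)$ and invoke the noise bound \eqref{eq:reg:general:noise}, which yields the three stated terms.

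The main obstacle is bookkeeping rather than analysis. One point is the sign convention for $\hat v$ and for the Bregman divergence, which I fix as described above. The other is that \eqref{eq:reg:general:noise} controls $\ellto{N}'(\bto{N}{\delta}-\optbto{N})$, whereas the argument produces $\ellto{N}'(\optbto{N}-\bto{N}{\delta})$. For even $\ellto{N}$, such as the quadratic case, the two norms coincide. Otherwise I would state that the noise assumption is read with the argument $\optbto{N}-\bto{N}{\delta}$. Beyond these two points, every step is the direct combination of the cited assumptions.
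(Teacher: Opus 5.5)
Your proposal is correct and follows essentially the same route as the paper's proof. It expands the fidelity difference via $\bregellto{N}$ at $\optbto{N}-\bto{N}{\delta}$, invokes \eqref{eq:reg:general:linearisation}, absorbs the cross term $\iprod{\ellto{N}'(\optbto{N}-\bto{N}{\delta})-\alpha_\delta\optwto{N}}{\Ato{N}'(\optxto{N})h}$ by Young's inequality with weight $\eta$, splits with $\norm{a+b}^2\le 2\norm{a}^2+2\norm{b}^2$, and combines with \eqref{eq:reg:general:accuracy} and \eqref{eq:reg:general:noise}. Your convention $\hat v=-\Ato{N}'(\optxto{N})^*\optwto{N}$ with $\Rto{N}(\xsolto{N}{\delta})-\Rto{N}(\optxto{N})-\dualprod{\hat v}{h}$ is what the paper's proof actually uses despite its typographical sign, and the paper silently makes the same identification of $\ellto{N}'(\optbto{N}-\bto{N}{\delta})$ with the quantity bounded in \eqref{eq:reg:general:noise} that you flag.
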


\begin{proof}
    For brevity, since $N$ is fixed throughout the proof, we drop the indexing by $N$, writing $\hat x \defeq \optxto{N}$, $\hat w \defeq \optwto{N}$, etc. By \cref{ass:reg:general} and the chosen notation, $A(\hat x) = \Ato{N}(\hat x) = \optbto{N} = \opt b$. The definition of the Bregman divergence then gives
    \[
        \begin{split}
            \Jto{N}{\delta}(x_\delta) - \Jto{N}{\delta}(\hat x )
             &
            =
            \ell(A(x_\delta) - b_\delta) - \ell(A(\hat x) - b_\delta)
            \\
             &
            =
            \iprod{\ell'(A(\hat x) - b_\delta)}{A(x_\delta) - A(\hat x)}
            + B_\ell(A(x_\delta) - b_\delta, A(\hat x) - b_\delta)
            \\
             &
            =
            \iprod{\ell'(\hat b - b_\delta)}{A(x_\delta ) - A(\hat x)} + B_\ell(A(x_\delta) - b_\delta, A(\hat x) - b_\delta)
            \\
            \MoveEqLeft[-2]
            -  \alpha_\delta\iprod{A'(\hat x)^* \hat w}{x_\delta - \hat x}
            +  \alpha_\delta\iprod{A'(\hat x)^*\hat w}{x_\delta - \hat x}
            \\
             &
            =
            \iprod{\ell'(\hat b - b_\delta)}{A(x_\delta ) - A(\hat x) -  A'(\hat x)(x_\delta - \hat x)}
            + B_\ell(A(x_\delta) - b_\delta , A(\hat x) - b_\delta)
            \\
            \MoveEqLeft[-1]
            + \iprod{\ell'(\hat b - b_\delta)}{A'(\hat x)(x_\delta - \hat x )}
            -  \alpha_\delta\iprod{A'(\hat x)^*\hat w}{x_\delta - \hat x} +  \alpha_\delta\iprod{A'(\hat x)^*\hat w}{x_\delta - \hat x}
            \\
             &
            \overset{\eqref{eq:reg:general:linearisation}}{\ge}
            \eta \norm{A'(\hat x)(x_\delta - \hat x)}^2
            + \langle  \ell'(\hat b - b_\delta) - \alpha_\delta\hat w , A'(\hat x)(x_\delta - \hat x ) \rangle
            \\
            \MoveEqLeft[-1]
            +  \alpha_\delta\iprod{A'(\hat x)^*\hat w}{x_\delta - \hat x}.
        \end{split}
    \]
    Continuing with Young's inequality and the triangle inequality, it follows that
    \[
        \begin{split}
            \Jto{N}{\delta}(x_\delta) - \Jto{N}{\delta}(\hat x )
             &
            \ge
            -\frac{1}{4\eta} \norm{  \ell'(\hat b - b_\delta) - \alpha_\delta \hat w}^2
            +  \alpha_\delta\iprod{A'(\hat x)^*\hat w}{x_\delta - \hat x}
            \\
             &
            \ge
            -\frac{1}{2\eta} \norm{\ell'(\hat b - b_\delta)}^2
            - \frac{\alpha_\delta^2}{2\eta}\norm{ \hat w}^2
            + \alpha_\delta\iprod{A'(\hat x)^*\hat w}{x_\delta - \hat x}
        \end{split}
    \]
    Using the definition of $\bregRto{N}{\hat v}$ and \cref{ass:reg:general}\,\cref{item:reg:general:noise,item:reg:general:accuracy}, it, therefore, follows that
    \[
        \begin{split}
            \errto{N}{\delta}
             &
            \ge
            -\frac{1}{2\eta} \norm{\ell'(\hat b - b_\delta)}^2
            - \frac{\alpha_\delta^2}{2\eta}\norm{ \hat w}^2
            + \alpha_\delta \left(\Rto{N}(x_{\delta}) - \Rto{N}(\hat x ) + \iprod{A'(\hat x)^*\hat w}{x_\delta - \hat x} \right)
            \\
             &
            \ge
            -\frac{\delta(N+1)}{2\eta} - \frac{\alpha_\delta^2}{2\eta}\norm{\hat w}_{\Yto{N}}^2
            + \alpha_\delta\bregRto{N}{\hat v}(x_{ \delta} , \hat x),
        \end{split}
    \]
    Dividing by $\alpha_\delta(N+1)$, the claim follows.
    The Bregman divergence is $B^{\hat v}(x_{ \delta}, \hat x)$ non-negative due its definition and to the source condition \eqref{eq:reg:basic-sc}.
\end{proof}

As a corollary, we get a convergence result for the average performance over long segments, as the corruption level $\delta \downto 0$, and the errors and the regularisation parameter behave appropriately.

\begin{corollary}
    Suppose that \cref{ass:reg:basic-sc,ass:reg:general,ass:reg:general:linearisation} hold for a $\optxto{N} \in \optXto{N}$ for all $N \in \N$.
    If, moreover,
    \[
        \alpha_{\delta} \downto 0,
        \qquad
        \delta/\alpha_{\delta} \downto 0,
        \qquad
        \lim_{\delta \to 0} \lim_{N \to \infty} \frac{1}{N+1} \sum_{k=0}^{N} \frac{e_{k,\delta}}{\alpha_\delta} = 0,
        \quad\text{and}\quad
        \sup_{N \in \N}
        \frac{1}{N+1}\norm{\optwto{N}}_{\Yto{N}}^2 < \infty,
    \]
    then,
    \[
        \lim_{\delta \to 0} \lim_{N\to \infty} \frac{1}{N+1} \bregRto{N}{\hat v}(\xsolto{N}{\delta} , \optxto{N}) = 0.
    \]
\end{corollary}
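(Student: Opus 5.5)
The plan is to apply \cref{thm:reg:bregman} for each fixed $N\in\N$ and $\delta\in(0,\delta^*)$, and then pass to the limits in the stated order: first $N\to\infty$, then $\delta\to 0$. For every $N$ and $\delta$ the theorem gives the sandwich
\[
    0 \le \frac{1}{N+1}\bregRto{N}{\hat v}(\xsolto{N}{\delta}, \optxto{N})
    \le
    \frac{\delta}{2\eta\alpha_\delta}
    + \frac{\errto{N}{\delta}}{\alpha_\delta(N+1)}
    + \frac{\alpha_\delta}{2\eta(N+1)}\norm{\optwto{N}}_{\Yto{N}}^2 .
\]
Since the lower bound is zero, it suffices to show that the $\limsup_{N\to\infty}$ of the right-hand side tends to zero as $\delta\downto 0$. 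Because the inner limit in the claim need not exist a priori, I will read the conclusion as a statement about $\limsup$ and $\liminf$; the sandwich then forces both to vanish.

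I will treat the three terms separately. The first term does not depend on $N$ and tends to zero because $\delta/\alpha_\delta\downto 0$. For the third term, set $M\defeq\sup_{N}\frac{1}{N+1}\norm{\optwto{N}}^2<\infty$. This bounds the term uniformly in $N$ by $\alpha_\delta M/(2\eta)$, which tends to zero because $\alpha_\delta\downto 0$.

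For the middle term I need to match the accuracy hypothesis. The quantity $\errto{N}{\delta}$ in \cref{ass:reg:general} is the accuracy for the whole segment, whereas the hypothesis is stated through the per-step errors $e_{k,\delta}$. I will interpret the hypothesis as concerning the segment errors, that is, $\errto{N}{\delta}\le\sum_{k=0}^N e_{k,\delta}$. This is how the per-step errors enter in the online setting of the introduction; if the notation is meant literally as $\errto{N}{\delta}$ itself, no interpretation is needed. With this identification the hypothesis states exactly that $\lim_{\delta\to0}\limsup_{N\to\infty}\errto{N}{\delta}/(\alpha_\delta(N+1))=0$.

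Combining the three estimates gives $\limsup_{N}$ of the averaged Bregman divergence at most
\[
    \frac{\delta}{2\eta\alpha_\delta}
    + \limsup_{N}\frac{\errto{N}{\delta}}{\alpha_\delta(N+1)}
    + \frac{\alpha_\delta M}{2\eta},
\]
and this bound tends to zero as $\delta\to 0$, which proves the claim. The only delicate point is the bookkeeping between $\errto{N}{\delta}$ and $e_{k,\delta}$ and the existence of the inner limit; everything else follows directly from \cref{thm:reg:bregman}.
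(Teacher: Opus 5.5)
Your proposal is correct and matches the paper's implicit argument: the corollary follows by passing to the limits $N\to\infty$ and then $\delta\downto 0$ in the three-term bound of \cref{thm:reg:bregman}, using $\delta/\alpha_\delta\downto0$, the uniform bound on $\norm{\optwto{N}}_{\Yto{N}}^2/(N+1)$ together with $\alpha_\delta\downto0$, and the error hypothesis. Your $\limsup$ reading and the bridge $\errto{N}{\delta}\le\sum_{k=0}^N e_{k,\delta}$ are sound; under the literal notation $e_{k,\delta}=\errto{k}{\delta}$ the bridge even holds automatically, since all errors are nonnegative and $\errto{N}{\delta}$ is the last summand.
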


\subsection{Estimates based on a strong source condition}
\label{sec:reg:strong}

Bregman divergences may not be very informative; for example, the Bregman divergence of the absolute value function in $\R$ can only discern signs, but not magnitudes. We therefore next develop (semi-)norm-based estimates.
We begin with the following lemma, which will be used to show that the approximate regularised solutions $\xsolto{N}{\delta}$ remain close to $\optxto{N}$ when the noise level, regularisation parameter, and accuracy parameter are sufficiently small.

\begin{lemma}
    \label{thm:reg:a-convergence}
    Suppose \cref{ass:reg:general} holds at a $\optxto{N} \in \optXto{N}$ for an $N\geq 0$. Then
    \[
        \frac{1}{N+1}\ellto{N}(\Ato{N}(\xsolto{N}{\delta}) - A_k(\hat x_k))
        \le
        \alpha_\delta C \boundconst{N}{\delta}
        \quad\text{and}\quad
        \frac{1}{N+1}\Rto{N}(\xsolto{N}{\delta}) \le \boundconst{N}{\delta}
    \]
    for
    \begin{equation}
        \label{eq:reg:boundconst}
        \boundconst{N}{\delta}
        \defeq
        \frac{C' \delta^q +\delta}{\alpha_{\delta}}
        + \frac{\errto{N}{\delta} }{\alpha_{\delta}(N+1)}
        + \frac{1}{N+1}\Rto{N}(\optxto{N}),
    \end{equation}
    which is independent of the choice of $\optxto{N} \in \optXto{N}$.
\end{lemma}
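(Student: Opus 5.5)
The plan is to compare the objective at the approximate solution with the objective at the ground truth, using only the accuracy condition \eqref{eq:reg:general:accuracy}, the noise bounds \eqref{eq:reg:general:noise}, and the quadratic bound \eqref{eq:reg:general:quadratic}. Throughout, I read $A_k(\hat x_k)$ in the statement as $\Ato{N}(\optxto{N})=\optbto{N}$. I also implicitly assume $\ellto{N}\ge 0$, as for norm-type fidelities; this is needed to drop terms.

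\textbf{Step 1: the bound on $\Rto{N}$.} Since $\Ato{N}(\optxto{N})=\optbto{N}$, we have
\[
    \Qto{N}{\delta}(\optxto{N}) = \ellto{N}(\optbto{N}-\bto{N}{\delta}) + \alpha_\delta \Rto{N}(\optxto{N}).
\]
The noise condition \eqref{eq:reg:general:noise} is stated for $\ellto{N}(\bto{N}{\delta}-\optbto{N})$, not for the reversed argument. I would either assume $\ellto{N}$ is even, as for norm-based fidelities, or read the condition with that sign. Then $\ellto{N}(\optbto{N}-\bto{N}{\delta}) \le (N+1)C'\delta^q$.

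Combining this with \eqref{eq:reg:general:accuracy} gives
\[
    \Jto{N}{\delta}(\xsolto{N}{\delta}) + \alpha_\delta \Rto{N}(\xsolto{N}{\delta}) \le (N+1)C'\delta^q + \alpha_\delta \Rto{N}(\optxto{N}) + \errto{N}{\delta}.
\]
Since $\Jto{N}{\delta}\ge 0$, dividing by $\alpha_\delta(N+1)$ yields $\frac{1}{N+1}\Rto{N}(\xsolto{N}{\delta}) \le \boundconst{N}{\delta}$. The extra $\delta/\alpha_\delta$ in $\boundconst{N}{\delta}$ only loosens this bound.

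\textbf{Step 2: the bound on $\ellto{N}$.} The same inequality bounds the fidelity term. I would aim for
\[
    \Jto{N}{\delta}(\xsolto{N}{\delta}) \le (N+1)C'\delta^q + \errto{N}{\delta} + \alpha_\delta\bigl(\Rto{N}(\optxto{N}) - \Rto{N}(\xsolto{N}{\delta})\bigr).
\]
Here the obstacle is that $-\Rto{N}(\xsolto{N}{\delta})$ need not be nonpositive. I would assume $\Rto{N}\ge 0$, which holds for the TV-type regularisers of \cref{ex:reg:r}, and drop that term.

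Then \eqref{eq:reg:general:quadratic} together with the first part of \eqref{eq:reg:general:noise} gives
\[
    \ellto{N}\bigl(\Ato{N}(\xsolto{N}{\delta})-\optbto{N}\bigr) \le C\bigl((N+1)C'\delta^q + \errto{N}{\delta} + \alpha_\delta \Rto{N}(\optxto{N}) + (N+1)\delta\bigr).
\]
Dividing by $N+1$, the right-hand side equals $C\alpha_\delta\boundconst{N}{\delta}$. This accounts for the $\delta/\alpha_\delta$ term in $\boundconst{N}{\delta}$.

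\textbf{Step 3: independence of $\optxto{N}$.} Every element of $\optXto{N}$ is an $\Rto{N}$-minimiser on the same constraint set, so $\Rto{N}(\optxto{N})$ takes the common minimal value. Hence $\boundconst{N}{\delta}$ does not depend on which $\optxto{N}$ is chosen.

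\textbf{Main obstacle.} The delicate points are sign issues rather than estimates. These are the nonnegativity of $\ellto{N}$ and $\Rto{N}$, and the argument orientation in the noise bound. I would make these conventions explicit, as they are implicit in the paper's setting.
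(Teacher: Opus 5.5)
Your proposal is correct and is essentially the paper's argument. The paper chains the quadratic bound, the accuracy condition, and both noise bounds into a single estimate for $C^{-1}\ellto{N}(\Ato{N}(\xsolto{N}{\delta})-\optbto{N}) + \alpha_\delta\Rto{N}(\xsolto{N}{\delta})$ and then splits it using nonnegativity of both terms. You instead run two separate chains, which gives a slightly sharper $\Rto{N}$-bound without the $\delta/\alpha_\delta$ term. The conventions you flag, namely nonnegativity of $\ellto{N}$ and $\Rto{N}$ and applying the noise bound to $\ellto{N}(\optbto{N}-\bto{N}{\delta})$, are exactly the ones the paper's proof also relies on implicitly.
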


\begin{proof}
    That $\boundconst{N}{\delta}$ does not depend on the choice of $\optxto{N} \in \optXto{N}$ follows from the definition of $\optXto{N}$ as the set of $\Rto{N}$-minimising solutions.

    By \cref{ass:reg:general}\,\cref{item:reg:general:noise,item:reg:general:accuracy,item:reg:general:quadratic}, and that $\Ato{N}(\optxto{N}) = \optbto{N}$ for all $k\in \{0, 1, \ldots, N\}$, we have
    \[
        \begin{aligned}
            C^{-1} & \ellto{N}(\Ato{N}(\xsolto{N}{\delta}) - \Ato{N}(\optxto{N}))
            + \alpha_\delta\Rto{N}(\xsolto{N}{\delta})
            \\
                   &
            \overset{\eqref{eq:reg:general:quadratic}}{\le}
            \ellto{N}(\Ato{N}(\xsolto{N}{\delta}) - \bto{N}{\delta}) + \norm{\ell'(\bto{N}{\delta} - \optbto{N})}_{\Yto{N}}^2
            +\alpha_\delta\Rto{N}(\xsolto{N}{\delta})
            \\
                   &
            \overset{\eqref{eq:reg:general:accuracy}}{\le}
            \errto{N}{\delta}
            + \norm{\ellto{N}'(\bto{N}{\delta}-\optbto{N})}_{\Yto{N}}^2
            +  \ellto{N}(\optbto{N} - \bto{N}{\delta}) + \alpha_\delta\Rto{N}(\optxto{N})
            \\
                   &
            \overset{\eqref{eq:reg:general:noise}}{\le}
            \errto{N}{\delta} + (N+1)(C' \delta^q + \delta) + \alpha_\delta\Rto{N}(\optxto{N}).
        \end{aligned}
    \]
    Hence, after dividing by $N+1$, the assertion follows from the elementary observation that for nonnegative quantities  $A,B,C$, the inequality $A+B\leq C$ implies $A\leq C$ and $B\leq C$.
\end{proof}

We will need the following “strong source condition” based on strong metric subregularity.
We refer to \cite{tuomov-subreg} for examples of the satisfaction of the corresponding static condition, in particular, with regard to total variation regularisation.
We discuss ways to prove the condition for the spatiotemporal regularisers of \cref{ex:reg:r} in \cref{sec:infconv}.

\begin{assumption}[Strong source condition]
    \label{ass:reg:strong-sc}
    Let $N \in \N$.
    We say that $\optxto{N} \in \Xto{N}$ satisfies the \term{strong source condition} if $\Xto{N}$ is equipped with a semi-norm $\norm{\freevar}_{N,\circ}$, and the following hold:
    \begin{enumerate}[label=(\roman*)]
        \item The basic source condition \eqref{eq:reg:basic-sc} holds for some for some $\optwto{N} \in \Ystarto{N}$.
        \item\label{item:reg:strong-sc:subdiff}
              There exists a $\delta^*>0$ such that for all $\delta \in (0, \delta^*)$, for given $\alpha_{\delta}$, the function $\Qto{N}{\delta}$ is \emph{\mbox{$\norm{\freevar}_{N,A,\circ}$}-strongly locally $\epsilonerror{N}{\delta}$-subdifferentiable} (see \cref{eq:subreg:local:strong}) at $\optxto{N}$ for $\optxtostar{N} \defeq \Jto{N}{\delta}'(\optxto{N}) - \alpha_\delta\Ato{N}'(\optxto{N})\optwto{N}$
              in a neighbourhood $\Ux{\optxto{N}}$ of $\optxto{N}$ with the factor $\subdifffactor{N}{\delta}>0$.
              Here the semi-norm
              \[
                  \norm{x^N}_{N,A,\circ}
                  \defeq
                  \sqrt{\norm{\Ato{N}'(\optxto{N}) x^N}_{\Yto{N}}^2 + \norm{x^N}_{N,\circ}^2}.
              \]

    \end{enumerate}
\end{assumption}

While not included as part of the source condition, to pass to the limit, we also require:

\begin{assumption}
    \label{ass:reg:strong-misc}
    Let \cref{ass:reg:strong-sc} hold. We also require:
    \begin{enumerate}[label=(\roman*)]
        \item \label{item:strong-misc:nbd}
              For some $\rho>0$, for all $\delta \in (0, \delta^*)$, we have
              \begin{equation*}
                  \Ux{\optxto{N}} \supset \Urho{N} \defeq \left\{ x^N \in \Xto{N} \middle|
                  \begin{array}{l}
                      \frac{1}{N+1}
                      \ellto{N}(\Ato{N}(\xto{N}) - \Ato{N}(\optxto{N})) \le \rho,
                      \\
                      \frac{1}{N+1} \Rto{N}(x^N)
                      \le \frac{1}{N+1} \Rto{N}(\optxto{N}) + \rho
                  \end{array}
                  \right\}.
              \end{equation*}
        \item  \label{item:strong-misc:boundconst-limit}
              For $\boundconst{N}{\delta}$ defined in \eqref{eq:reg:boundconst}, we have $\lim_{\delta \downto 0} \sup_{N \in \N} \boundconst{N}{\delta} = \lim_{\delta \downto 0} \sup_{N \in \N} \frac{1}{N+1}\Rto{N}(\optxto{N}) < \infty$, along with $\lim_{\delta \downto 0} \alpha_\delta=0$
    \end{enumerate}
\end{assumption}

\begin{remark}
    \Cref{ass:reg:strong-misc}\,\cref{item:strong-misc:nbd} is a somewhat strong requirement on the neighbourhood of strong local subdifferentiability of the single point $\optxto{N}$: it has to contain the level set neighbourhood $\Urho{N}$, which is independent of the choice of the point $\optxto{N} \in \optXto{N}$, by the definition of the latter set.
    We relax this to a covering property when treating the semi-strong source condition of \cref{sec:reg:semistrong}.

    The condition \cref{item:strong-misc:boundconst-limit} is a uniform-in-$N$ limiting property of the parameters and errors as $\delta\downto0$, as well a bound on the average values of the regularisation term for the minimal solutions.
    The $\sup_{N \in \N}$ can be relaxed to a $\lim_{N \to \infty}$ if we only require the lemma that follows the next example to hold for $N \ge N_\delta$ for some $N_\delta \in \N$.
\end{remark}

\begin{example}
    \label{ex:reg:strong-misc:containment}
    Suppose $\ellto{N}$ is minimised at zero.
    Let $\optxto{N} \in \optXto{N}$.
    Then, recalling the basic source condition \eqref{eq:reg:basic-sc}, the function $\Psi$ defined by
    \[
        \Psi(\xto{N}) \defeq \ellto{N}(\Ato{N}(\xto{N}) - \optbto{N}) + \Rto{N}(\xto{N}) + \dualprod{\Ato{N}'(\optxto{N})^*\optwto{N}}{\xto{N}-\optxto{N}}
    \]
    is minimised at $\optxto{N}$, and satisfies $\Psi(\optxto{N})=0$.
    (The last two convex terms are minimised at $\optxto{N}$ and, by assumption, so is the first term.)
    If $\Psi$ is strongly (globally) $\norm{\freevar}_{\Xto{N}}$-subdifferentiable at $\optxto{N}$ (for $0$), then $\Urho{N} \subset \B(\optxto{N},r)$ for any $r>0$ for small enough $\rho>0$.
    Consequently, \cref{ass:reg:strong-misc}\,\cref{item:strong-misc:nbd} holds.

    Note that $\Psi$ differs from $\Qto{N}{\delta}$, which has to be \mbox{$\norm{\freevar}_{N,A,\circ}$}-strongly locally $\epsilonerror{N}{\delta}$-subdifferentiable (\cref{ass:reg:strong-sc}\,\cref{item:reg:semistrong-sc:subdiff}) through the additional linear term, which does not effect the “strength” of the subdifferentiability, and through the true data $\optbto{N}$ in place of the noisey data $\bto{N}{\delta}$.
    The data can be swapped for small enough $\delta^*$ through Lipschitz assumptions on $\ellto{N}$, but the required growth here is possibly stronger due to requiring growth with respect to a norm instead of merely a semi-norm.
    However, $\Ux{\optxto{N}}$ may itself contain a subspace when the growth of  $\Qto{N}{\delta}$, is with respect to a proper semi-norm.
    Therefore, it may also be possible to relax the growth requirement on $\Psi$.
\end{example}

The next lemma explains why we require \cref{ass:reg:strong-misc}: it ensures that for small enough $\delta>0$, the algorithmic solutions $\xsolto{N}{\delta}$ belong to the neighbourhood of strong local subdifferentiability.

\begin{lemma}
    \label{lemma:reg:containment}
    Suppose \cref{ass:reg:strong-misc} holds.
    Then there exists $\bar\delta \in (0, \delta^*)$ such that $\xsolto{N}{\delta} \in \Ux{\optxto{N}}$ for all $\delta \in (0, \bar\delta)$ and $N \in \N$.
\end{lemma}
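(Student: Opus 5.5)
The plan is to show that $\xsolto{N}{\delta} \in \Urho{N}$ for all small $\delta$ and all $N$, uniformly. \Cref{ass:reg:strong-misc}\,\cref{item:strong-misc:nbd} then gives $\Urho{N} \subset \Ux{\optxto{N}}$, which is the claim. Membership in $\Urho{N}$ requires two inequalities: a bound on the averaged data discrepancy $\frac{1}{N+1}\ellto{N}(\Ato{N}(\xsolto{N}{\delta}) - \Ato{N}(\optxto{N}))$, and a bound on the averaged regulariser excess $\frac{1}{N+1}\bigl(\Rto{N}(\xsolto{N}{\delta}) - \Rto{N}(\optxto{N})\bigr)$. Both should come from \cref{thm:reg:a-convergence}, combined with the uniform limits in \cref{ass:reg:strong-misc}\,\cref{item:strong-misc:boundconst-limit}.

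For the data term, \cref{thm:reg:a-convergence} gives the bound $\alpha_\delta C \boundconst{N}{\delta} \le \alpha_\delta C \sup_{M\in\N}\boundconst{M}{\delta}$. By \cref{item:strong-misc:boundconst-limit}, $\sup_M \boundconst{M}{\delta}$ converges to a finite limit as $\delta \downto 0$, so it is bounded for small $\delta$, while $\alpha_\delta \to 0$. Hence there is $\delta_1 \in (0,\delta^*)$ such that $\alpha_\delta C\sup_M\boundconst{M}{\delta} \le \rho$ for all $\delta\in(0,\delta_1)$. This gives the first condition uniformly in $N$.

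For the regulariser term, \cref{thm:reg:a-convergence} gives
\[
    \frac{1}{N+1}\Rto{N}(\xsolto{N}{\delta}) - \frac{1}{N+1}\Rto{N}(\optxto{N}) \le \boundconst{N}{\delta} - \frac{1}{N+1}\Rto{N}(\optxto{N}) = \frac{C'\delta^q+\delta}{\alpha_\delta} + \frac{\errto{N}{\delta}}{\alpha_\delta(N+1)}.
\]
This step is the main obstacle. We need the right-hand side to be at most $\rho$ uniformly in $N$, yet \cref{item:strong-misc:boundconst-limit} only states the equality of the two limits $\lim_\delta\sup_N\boundconst{N}{\delta} = \lim_\delta\sup_N\frac{1}{N+1}\Rto{N}(\optxto{N})$. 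A difference of suprema does not directly control the supremum of the difference. I would read the assumption as saying that the noise/error part of $\boundconst{N}{\delta}$ vanishes uniformly, i.e.\ $\sup_N\bigl[\frac{C'\delta^q+\delta}{\alpha_\delta}+\frac{\errto{N}{\delta}}{\alpha_\delta(N+1)}\bigr]\to0$; this is the intended meaning, and it is implied e.g.\ when $\frac{1}{N+1}\Rto{N}(\optxto{N})$ is independent of $N$ or convergent. If needed, I would make this explicit. Either way, we obtain $\delta_2$ such that the right-hand side is at most $\rho$ for $\delta\in(0,\delta_2)$ and all $N$.

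Finally, set $\bar\delta \defeq \min\{\delta_1,\delta_2\}$. For $\delta\in(0,\bar\delta)$ and every $N\in\N$, both inequalities defining $\Urho{N}$ hold, so $\xsolto{N}{\delta}\in\Urho{N}\subset\Ux{\optxto{N}}$.
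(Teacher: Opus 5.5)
Your argument is the paper's: \cref{thm:reg:a-convergence} together with \cref{ass:reg:strong-misc}\,\cref{item:strong-misc:boundconst-limit} places $\xsolto{N}{\delta}$ in $\Urho{N}$ for all $N$ once $\delta$ is small, and \cref{item:strong-misc:nbd} gives $\Urho{N}\subset\Ux{\optxto{N}}$. Your worry about the literal form of \cref{item:strong-misc:boundconst-limit} is justified, since the paper's one-line proof passes over it. The uniform vanishing of $\frac{C'\delta^q+\delta}{\alpha_\delta}+\frac{\errto{N}{\delta}}{\alpha_\delta(N+1)}$ that you adopt is exactly the ``relative rate control'' the paper says it requires at the end of \cref{sec:infconv:subreg}. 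Only your aside that convergence of $\frac{1}{N+1}\Rto{N}(\optxto{N})$ in $N$ would imply this is wrong: a large value of this average at a single $N$ can mask a non-vanishing error term at the other $N$, whereas independence of $N$ does suffice.
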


\begin{proof}
    By  \cref{ass:reg:strong-misc}\,\cref{item:strong-misc:boundconst-limit} and \cref{thm:reg:a-convergence}, there exists $\bar\delta \in (0, \delta^*)$, independent of $N$, such that $\xsolto{N}{\delta} \in \Urho{N}$ when $\delta \in (0, \bar\delta)$.
    By \cref{ass:reg:strong-misc}\,\cref{item:strong-misc:nbd}, it follows that $\xsolto{N}{\delta} \in \Ux{\optxto{N}}$.
\end{proof}

We can now obtain our main estimate for the strong source condition. We will afterwards convert it into a convergence result.

\begin{theorem}
    \label{thm:reg:strong}
    Let $N \in \N$.
    Suppose \cref{ass:reg:general,ass:reg:strong-sc,ass:reg:strong-misc} hold at $\optxto{N} \in \Xto{N}$.
    Then there exists $\bar\delta \in (0, \delta^*)$, independendent of $N$, such that if  $\delta \in (0, \bar\delta)$, we have
    \begin{equation}
        \label{eq:reg:strong:result}
        \frac{1}{N+1}\norm{\xsolto{N}{\delta} - \optxto{N} }^2_{N,\circ}
        \le
        \frac{1}{\subdifffactor{N}{\delta}}\left(
        \frac{\errto{N}{\delta}+\epsilonerror{N}{\delta}}{N+1}
        +\frac{\delta}{2\subdifffactor{N}{\delta}}
        +\frac{\alpha_{\delta}^2}{2\subdifffactor{N}{\delta}(N+1)}\norm{\optwto{N}}_{\Yto{N}}^2
        \right).
    \end{equation}
\end{theorem}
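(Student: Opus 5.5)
We first invoke \cref{lemma:reg:containment} to obtain $\bar\delta \in (0,\delta^*)$, independent of $N$, such that $\xsolto{N}{\delta} \in \Ux{\optxto{N}}$ for all $\delta \in (0,\bar\delta)$. This is the only place where \cref{ass:reg:strong-misc} enters. Because of it we may apply the strong local $\epsilonerror{N}{\delta}$-subdifferentiability of \cref{ass:reg:strong-sc}\,\cref{item:reg:strong-sc:subdiff} at the point $x=\xsolto{N}{\delta}$. Dropping indices as in the proof of \cref{thm:reg:bregman}, this gives
\[
    Q_\delta(x_\delta) \ge Q_\delta(\hat x) + \dualprod{\hat x^*}{x_\delta-\hat x} + \gamma\norm{A'(\hat x)(x_\delta-\hat x)}^2 + \gamma\norm{x_\delta-\hat x}_{\circ}^2 - \epsilon,
\]
with $\gamma=\subdifffactor{N}{\delta}$ and $\epsilon=\epsilonerror{N}{\delta}$. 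We then combine this with the accuracy bound \eqref{eq:reg:general:accuracy}, $Q_\delta(x_\delta)\le Q_\delta(\hat x)+e$, which yields
\[
    \gamma\norm{A'(\hat x)(x_\delta-\hat x)}^2+\gamma\norm{x_\delta-\hat x}_\circ^2 \le e+\epsilon-\dualprod{\hat x^*}{x_\delta-\hat x}.
\]

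The next step is to handle the linear term. Since $\hat x^* = J_\delta'(\hat x) - \alpha_\delta A'(\hat x)^*\hat w$ and $J_\delta'(\hat x)=A'(\hat x)^*\ell'(A(\hat x)-b_\delta)=A'(\hat x)^*\ell'(\hat b-b_\delta)$, we can write
\[
    -\dualprod{\hat x^*}{x_\delta-\hat x}=-\iprod{\ell'(\hat b-b_\delta)-\alpha_\delta\hat w}{A'(\hat x)(x_\delta-\hat x)}.
\]
This quantity involves only $A'(\hat x)(x_\delta-\hat x)$, so we apply Young's inequality \eqref{eq:intro:young}, absorbing into $\gamma\norm{A'(\hat x)(x_\delta-\hat x)}^2$. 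The result is the bound $\frac{1}{4\gamma}\norm{\ell'(\hat b-b_\delta)-\alpha_\delta\hat w}^2\le \frac{1}{2\gamma}\norm{\ell'(\hat b-b_\delta)}^2+\frac{\alpha_\delta^2}{2\gamma}\norm{\hat w}^2$.

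Finally, \eqref{eq:reg:general:noise} gives $\norm{\ell'(b_\delta-\hat b)}^2\le(N+1)\delta$. Here we take $\ell'$ to be odd, as for quadratic $\ell$, or else we read the noise bound symmetrically. We then divide by $\gamma(N+1)$ and obtain exactly \eqref{eq:reg:strong:result}.

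The main subtlety is the sign and adjoint bookkeeping in $\hat x^*$: the statement writes $\Ato{N}'(\optxto{N})\optwto{N}$ where the adjoint is meant. Besides that, we must check that the Young's split matches the stated constants. We also need to ensure that the dependence of $\bar\delta$ on $N$ is avoided, which is supplied by \cref{lemma:reg:containment}.
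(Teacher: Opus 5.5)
Your proposal is correct and follows the paper's proof essentially step for step. The common steps are: containment via \cref{lemma:reg:containment}; strong local $\epsilonerror{N}{\delta}$-subdifferentiability combined with \eqref{eq:reg:general:accuracy}; rewriting $\optxtostar{N}$ as $\Ato{N}'(\optxto{N})^*(\ellto{N}'(\optbto{N}-\bto{N}{\delta})-\alpha_\delta\optwto{N})$; Young's inequality absorbed into the $\norm{\Ato{N}'(\optxto{N})(\xsolto{N}{\delta}-\optxto{N})}^2$ term; the split $\norm{a-b}^2\le 2\norm{a}^2+2\norm{b}^2$; and the noise bound, all with matching constants. Your caveat is well taken: \eqref{eq:reg:general:noise} literally controls $\ellto{N}'(\bto{N}{\delta}-\optbto{N})$ rather than $\ellto{N}'(\optbto{N}-\bto{N}{\delta})$, and the paper switches between them without comment, so your explicit symmetry assumption is more careful than the original.
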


\begin{proof}
    By \cref{lemma:reg:containment}, there exists $\bar\delta>\delta^*$ such that $\xsolto{N}{\delta} \in \Urho{N}$ for all $N \in \N$ when $\delta \in (0, \bar\delta)$.
    We also have
    \[
        \begin{split}
            \Jto{N}{\delta}(\optxto{N}) - \alpha_{\delta} \Ato{N}'(\optxto{N})^*\optwto{N}
             &
            =
            \Ato{N}'(\optxto{N})^*(\ellto{N}'(\Ato{N}(\optxto{N}) - \bto{N}{\delta}) - \alpha_{\delta} \optwto{N})
            \\
             &
            =
            \Ato{N}'(\optxto{N})^*(\ellto{N}'(\optbto{N} - \bto{N}{\delta}) - \alpha_{\delta} \optwto{N}).
        \end{split}
    \]
    Hence, by \cref{ass:reg:general}\,\cref{item:reg:general:accuracy} and \ref{ass:reg:strong-sc}\,\cref{item:reg:strong-sc:subdiff} followed by Young's inequality \eqref{eq:intro:young},
    \begin{equation}
        \label{eq:reg:strong:base-est}
        \begin{aligned}[t]
            \errto{N}{\delta} + \epsilonerror{N}{\delta}
             &
            \overset{\eqref{eq:reg:general:accuracy}}{\ge}
            \left[\Jto{N}{\delta}(\xsolto{N}{\delta}) + \alpha_\delta \Rto{N}(\xsolto{N}{\delta}) \right]
            -
            \left[\Jto{N}{\delta}(\optxto{N}) + \alpha_\delta\Rto{N}(\hat x ) \right]
            +
            \epsilonerror{N}{\delta}
            \\
             &
            \overset{\eqref{eq:subreg:local:strong}}{\ge}
            \dualprod{\Jto{N}{\delta}(\optxto{N}) - \alpha_{\delta}  \Ato{N}'(\optxto{N})^*\optwto{N}}{\xsolto{N}{\delta}-\optxto{N}}
            + \subdifffactor{N}{\delta} \norm{\xsolto{N}{\delta} - \optxto{N}}_{N,A,\circ}^2
            \\
             &
            =
            \iprod{
                \ellto{N}'(\optbto{N} - \bto{N}{\delta}) - \alpha_{\delta} \optwto{N}
            }{
                \Ato{N}'(\optxto{N})(\xsolto{N}{\delta} - \optxto{N})
            }
            \\
            \MoveEqLeft[-1]
            +
            \subdifffactor{N}{\delta} \norm{\Ato{N}'(\optxto{N})(\xsolto{N}{\delta} - \optxto{N})}_{Y_k}^2
            +
            \subdifffactor{N}{\delta} \norm{\xsolto{N}{\delta}-\optxto{N}}_{N,\circ}^2
            \\
             &
            \ge
            -
            \frac{1}{4\subdifffactor{N}{\delta}}\norm{\ellto{N}'(\optbto{N}-\bto{N}{\delta}) - \alpha_{\delta} \optwto{N}}_{\Yto{N}}^2
            +
            \subdifffactor{N}{\delta} \norm{\xsolto{N}{\delta}-\optxto{N}}_{N,\circ}^2.
        \end{aligned}
    \end{equation}
    Thus, rearranging and continuing with Young's inequality along with \cref{ass:reg:general}\,\cref{item:reg:general:noise}, we obtain
    \begin{equation}
        \label{eq:reg:strong:final-est}
        \begin{split}
            \norm{\xsolto{N}{\delta} - \optxto{N}}_{N,\circ}^2
             &
            \le
            \frac{1}{\subdifffactor{N}{\delta}}(\errto{N}{\delta} + \epsilonerror{N}{\delta})
            +\frac{1}{4\subdifffactor{N}{\delta}^2}\norm{\ellto{N}'(\optbto{N} - \bto{N}{\delta})-\alpha_{\delta} \optwto{N}}_{\Yto{N}}^2
            \\
             &
            \le
            \frac{1}{\subdifffactor{N}{\delta} }(\errto{N}{\delta} + \epsilonerror{N}{\delta})
            +\frac{\norm{\ellto{N}'(\optbto{N} - \bto{N}{\delta})}^2}{2\subdifffactor{N}{\delta}^2}
            +\frac{\alpha_{\delta}^2}{2\subdifffactor{N}{\delta}^2}\norm{\optwto{N}}_{\Yto{N}}^2
            \\
             &
            \le
            \frac{1}{\subdifffactor{N}{\delta} }(\errto{N}{\delta} + \epsilonerror{N}{\delta})
            +\frac{\delta(N+1)}{2\subdifffactor{N}{\delta}^2}
            +\frac{\alpha_{\delta}^2}{2\subdifffactor{N}{\delta}^2}\norm{\optwto{N}}_{\Yto{N}}^2.
        \end{split}
    \end{equation}
    Dividing by $N+1$, we obtain the claim.
\end{proof}

The next corollary shows norm convergence under similar parameter choices as in \cref{thm:reg:bregman}.
Define
\begin{subequations}
    \label{eq:reg:lim-quantities}
    \begin{align}
         & \mathscr{E}_{1,\delta}: = \lim_{N\to \infty} \frac{\errto{N}{\delta}+\epsilonerror{N}{\delta}}{(N+1)\subdifffactor{N}{\delta}} < \infty,
        \\
         & \mathscr{E}_{2,\delta}: = \lim_{N\to \infty}\frac{\delta}{\subdifffactor{N}{\delta}^2}< \infty,
        \\
         & \mathscr{E}_{3,\delta}: = \lim_{N\to \infty}\frac{\alpha_{\delta}^2}{(N+1)\subdifffactor{N}{\delta}^2}\norm{\optwto{N}}_{\Yto{N}}^2 < \infty.
    \end{align}
\end{subequations}

\begin{corollary}
    \label{cor:reg:strong-limit}
    Suppose \cref{ass:reg:general,ass:reg:strong-sc,ass:reg:strong-misc} hold at some $\optxto{N} \in \Xto{N}$ for all $N \in \N$.
    If
    \[
        \lim_{\delta \downto 0} \max\{\mathscr{E}_{1,\delta},\mathscr{E}_{2,\delta}, \mathscr{E}_{3,\delta}\} = 0,
    \]
    then
    \[
        \lim_{\delta \downto 0}  \lim_{N\to \infty} \frac{1}{N+1} \norm{\xsolto{N}{\delta}-\optxto{N}}^2_{N,\circ} = 0.
    \]
\end{corollary}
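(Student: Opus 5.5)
The corollary follows directly from \cref{thm:reg:strong}. We pass to the limits in \eqref{eq:reg:strong:result}, first as $N \to \infty$ and then as $\delta \downto 0$.

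First, \cref{thm:reg:strong} provides a $\bar\delta \in (0,\delta^*)$ that does not depend on $N$. For every $\delta \in (0,\bar\delta)$ and every $N \in \N$, the estimate \eqref{eq:reg:strong:result} therefore holds. Rewriting its right-hand side term by term gives
\[
    \frac{1}{N+1}\norm{\xsolto{N}{\delta} - \optxto{N}}^2_{N,\circ}
    \le
    \frac{\errto{N}{\delta}+\epsilonerror{N}{\delta}}{(N+1)\subdifffactor{N}{\delta}}
    + \frac{1}{2}\frac{\delta}{\subdifffactor{N}{\delta}^2}
    + \frac{1}{2}\frac{\alpha_\delta^2}{(N+1)\subdifffactor{N}{\delta}^2}\norm{\optwto{N}}_{\Yto{N}}^2 .
\]
The three terms on the right are exactly the quantities whose $N$-limits define $\mathscr{E}_{1,\delta}$, $\mathscr{E}_{2,\delta}$ and $\mathscr{E}_{3,\delta}$ in \eqref{eq:reg:lim-quantities}. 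The independence of $\bar\delta$ from $N$, which comes from \cref{lemma:reg:containment}, is the key point: it lets us fix one $\delta$ and send $N \to \infty$ without leaving the neighbourhood of subdifferentiability.

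Next, fix $\delta \in (0,\bar\delta)$ and take $\limsup_{N\to\infty}$. The definition \eqref{eq:reg:lim-quantities} presupposes that these limits exist and are finite. Hence the limsup of the right-hand side equals $\mathscr{E}_{1,\delta} + \tfrac12\mathscr{E}_{2,\delta} + \tfrac12\mathscr{E}_{3,\delta} \le 2\max\{\mathscr{E}_{1,\delta},\mathscr{E}_{2,\delta},\mathscr{E}_{3,\delta}\}$.

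Finally, the left-hand side is nonnegative. Sending $\delta \downto 0$ and using the hypothesis $\lim_{\delta\downto0}\max\{\mathscr{E}_{1,\delta},\mathscr{E}_{2,\delta},\mathscr{E}_{3,\delta}\}=0$, we get that the limsup in $N$ tends to zero. The liminf is squeezed between zero and this limsup, so the inner limit exists in the limiting sense of the statement and the iterated limit is zero.

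The only delicate point is that the inner limit $\lim_{N\to\infty}$ of the left-hand side need not exist a priori. I would handle this by reading the statement with $\limsup_{N}$, as above; in any case the squeeze makes the iterated limit well defined and equal to $0$. Everything else is a direct substitution, so I expect no substantive obstacle beyond the uniformity in $N$ already supplied by \cref{thm:reg:strong}.
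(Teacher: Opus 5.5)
Your proposal is correct and follows the paper's proof: apply \cref{thm:reg:strong} with its $N$-independent threshold $\bar\delta$, let $N\to\infty$ so the right-hand side becomes $\mathscr{E}_{1,\delta}+\tfrac12\mathscr{E}_{2,\delta}+\tfrac12\mathscr{E}_{3,\delta}$, and then let $\delta\downto 0$. Your $\limsup_N$ reading is a sensible tightening that the paper leaves implicit, but the squeeze only yields $\lim_{\delta\downto0}\limsup_{N}=\lim_{\delta\downto0}\liminf_{N}=0$; it does not show that $\lim_{N}$ exists for each fixed $\delta$.
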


\begin{proof}
    Since the threshold $\bar\delta$ of \cref{thm:reg:strong} is independent of $N$, it suffices to take $N \to \infty$ in \eqref{eq:reg:strong:result} divided by $N+1$ for all $\delta \in (0, \bar\delta)$, and then let $\delta \downto 0$.
\end{proof}

\subsection{Estimates based on a semi-strong source condition}
\label{sec:reg:semistrong}

We now reduce the assumption of \mbox{$\norm{\freevar}_{N,A,\circ}$}-strong local $\epsilonerror{N}{\delta}$-subdifferentiability, to \mbox{$(\Ato{N}, \norm{\freevar}_{N,\circ})$}-semi-strong local $\epsilonerror{N}{\delta}$-subdifferentiability.
This will yield convergence to a \emph{set} of minimal-$\Rto{N}$ solutions, instead of a specific minimal-$\Rto{N}$ solution.
We recall that the former set $\optXto{N}$ is defined in \cref{eq:reg:problem:truth:regularised}.
The results here, even in the static case with no $N$-dependence, relax the assumptions of \cite{valkonen2021regularisation}.

\begin{assumption}[Semi-strong source condition]
    \label{ass:reg:semistrong-sc}
    Let $N \in \N$.
    We say that $\optxto{N} \in \Xto{N}$ satisfies the \term{semi-strong source condition}, if $\Xto{N}$ is equipped with a semi-norm $\norm{\freevar}_{N,\circ}$, and the following hold:
    \begin{enumerate}[label=(\roman*)]
        \item\label{item:reg:semistrong-sc:basic-sc}
              The basic source condition  \eqref{eq:reg:basic-sc} holds for some $\optwto{N} \in \Yto{N}$.
        \item\label{item:reg:semistrong-sc:subdiff}
              There exist a $\delta^*>0$ such that for all $\delta \in (0, \delta^*)$, for given $\alpha_{\delta}>0$, the function $\Qto{N}{\delta}$ is \emph{$(\Ato{N}, \norm{\freevar}_{N,\circ})$-semi-strongly locally $\epsilonerror{N}{\delta}$-subdifferentiable} (see \eqref{eq:subreg:strong-subdiff}) at $\optxto{N}$ for $\optxtostar{N} \defeq \Jto{N}{\delta}'(\optxto{N}) - \alpha_{\delta} \Ato{N}'(\optxto{N})^*\optwto{N}$ with respect to $\optXto{N}$ in a neighbourhood $\Ux{\optxto{N}}$ of $\optxto{N}$ with the factor $\subdifffactor{N}{\delta}>0$.
    \end{enumerate}
\end{assumption}

Again, we refer to \cite{tuomov-subreg} for examples of the satisfaction of the corresponding static condition. We discuss ways to prove the condition for the spatiotemporal regularisers of \cref{ex:reg:r} in \cref{sec:infconv}.

To pass to the limit, we also require the following relaxation of \cref{ass:reg:strong-misc}.

\begin{assumption}
    \label{ass:reg:semistrong-misc}
    We require:
    \begin{enumerate}[label=(\roman*)]
        \item\label{item:reg:semistrong-misc:containment}
              Let $\Urho{N}$ be defined in \cref{ass:reg:strong-misc}\,\cref{item:strong-misc:nbd}.
              Then, there exists a $\delta^*>0$ such that for all $\delta \in (0, \delta^*)$ and $N \in \N$, there exist collections $\barXto{N}{\delta} \subset \optXto{N}$ of points satisfying the \emph{semi-strong} source condition of \cref{ass:reg:semistrong-sc} for the same $\delta^*$ and $\subdifffactor{N}{\delta}$ with $\Union_{\bar x^N \in \barXto{N}{\delta}} \Ux{\bar x^N} \supset \Urho{N}$.
        \item\label{item:reg:semistrong-misc:boundconst-limit}
              For $\boundconst{N}{\delta}$ defined in \eqref{eq:reg:boundconst},
              we have $\lim_{\delta \downto 0} \lim_{N \to \infty} \boundconst{N}{\delta} = \lim_{\delta \downto 0} \lim_{N \to \infty} \frac{1}{N+1}\Rto{N}(\optxto{N}) < \infty$, as well as $\lim_{\delta \downto 0} \alpha_\delta=0$
    \end{enumerate}
\end{assumption}

\begin{example}
    \label{ex:reg:semistrong-misc:containment}
    Recall \cref{ex:reg:strong-misc:containment}.
    If, there, we only have semi-strong subdifferentiability of $\Psi$, then $\Urho{N} \subset \optXto{N}+\B(0,r)$ for any $r>0$ for small enough $\rho>0$.
    Consequently, if there exists a set $\barXto{N}{\delta} \subset \optXto{N}$ where \cref{ass:reg:semistrong-sc} holds for uniform  $\subdifffactor{N}{\delta}$ and $\delta^*$ with $\Ux{\bar x^N} = \B(\optxto{N}, \epsilon_\delta)$  and $\barXto{N}{\delta} + \B(0, \epsilon_\delta) \supset \optXto{N}$ for some uniform $\epsilon_\delta>0$, then \cref{ass:reg:semistrong-misc}\,\cref{item:reg:semistrong-misc:containment} holds.
    The existence of the uniform $\epsilon_\delta>0$ holds, for instance, when $\barXto{N}{\delta}$ is finite, or when $\optXto{N}$ is compact and all $\optxto{N} \in \optXto{N}$ satisfy \cref{ass:reg:semistrong-sc}.
\end{example}

We denote the set distance with respect to $\norm{\freevar}_{N,\circ}$ by
\begin{equation}
    \label{eq:reg:semistrong-dist}
    \dist_{N,\circ}(\xto{N}, \optXto{N},) \defeq \inf_{\optxto{N} \in \optXto{N}} \norm{\optxto{N}- \xto{N}}_{N,\circ}
    \quad\text{for any}\quad
    \optXto{N} \subset \Xto{N}.
\end{equation}
Then we have the following initial estimate.

\begin{lemma}
    \label{lemma:reg:semistrong-dist}
    Let $N \in \N$.
    Suppose \cref{ass:reg:general,ass:reg:semistrong-sc} hold at some $\optxto{N} \in \optXto{N}$.
    Let $\delta \in (0, \delta^*)$.
    If $\xsolto{N}{\delta} \in \Ux{\optXto{N}}$, then
    \[
        \dist_{N,\circ}^2(\xsolto{N}{\delta}, \optXto{N})
        \le
        \frac{\errto{N}{\delta}+\epsilonerror{N}{\delta}}{(N+1)\subdifffactor{N}{\delta}}
        + \frac{\delta}{2\subdifffactor{N}{\delta}2}
        + \frac{\alpha_{\delta}^2}{2(N+1)\subdifffactor{N}{\delta}^2} \norm{\optwto{N}}_{\Yto{N}}^2.
    \]
\end{lemma}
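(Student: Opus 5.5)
The plan is to repeat the proof of \cref{thm:reg:strong} almost verbatim, with the strong local subdifferentiability \eqref{eq:subreg:local:strong} replaced by the semi-strong variant \eqref{eq:subreg:strong-subdiff} with respect to $\optXto{N}$. I read the hypothesis $\xsolto{N}{\delta} \in \Ux{\optXto{N}}$ as $\xsolto{N}{\delta} \in \Ux{\optxto{N}}$ for the point $\optxto{N}$ at which \cref{ass:reg:semistrong-sc} holds. This is the situation that \cref{ass:reg:semistrong-misc} will later provide through the covering. In the application one picks $\bar x^N \in \barXto{N}{\delta}$ with $\xsolto{N}{\delta} \in \Ux{\bar x^N}$, and the estimate below does not depend on which element of $\optXto{N}$ is used, because $\Qto{N}{\delta}(\optxto{N})$ comparisons via \eqref{eq:reg:general:accuracy} hold for any minimum-$\Rto{N}$ solution.

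First, by the chain rule and $\Ato{N}(\optxto{N}) = \optbto{N}$, I would rewrite the subgradient as
\[
\optxtostar{N} = \Ato{N}'(\optxto{N})^*\bigl(\ellto{N}'(\optbto{N}-\bto{N}{\delta}) - \alpha_\delta \optwto{N}\bigr).
\]
Next I would combine the accuracy bound \eqref{eq:reg:general:accuracy} with \eqref{eq:subreg:strong-subdiff} evaluated at $x = \xsolto{N}{\delta}$. This gives
\[
\errto{N}{\delta}+\epsilonerror{N}{\delta} \ge \iprod{\ellto{N}'(\optbto{N}-\bto{N}{\delta}) - \alpha_\delta\optwto{N}}{\Ato{N}'(\optxto{N})(\xsolto{N}{\delta}-\optxto{N})} + \subdifffactor{N}{\delta}\norm{\Ato{N}'(\optxto{N})(\xsolto{N}{\delta}-\optxto{N})}^2 + \subdifffactor{N}{\delta}\dist_{N,\circ}^2(\xsolto{N}{\delta},\optXto{N}).
\]
Young's inequality \eqref{eq:intro:young} with parameter chosen to cancel the $\norm{\Ato{N}'(\optxto{N})(\cdot)}^2$ term leaves $-\frac{1}{4\subdifffactor{N}{\delta}}\norm{\ellto{N}'(\optbto{N}-\bto{N}{\delta}) - \alpha_\delta \optwto{N}}^2$. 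The triangle inequality together with $(a+b)^2\le 2a^2+2b^2$ then splits this into the noise part and the $\alpha_\delta^2\norm{\optwto{N}}^2$ part.

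Finally, the noise bound \eqref{eq:reg:general:noise} gives $\norm{\ellto{N}'(\bto{N}{\delta}-\optbto{N})}^2 \le \delta(N+1)$. I would rearrange, divide by $\subdifffactor{N}{\delta}(N+1)$, and obtain the claimed bound (the $\delta$ term has denominator $2\subdifffactor{N}{\delta}^2$, matching \eqref{eq:reg:strong:result}). The only point needing care is the noise term: \eqref{eq:reg:general:noise} is stated for $\ellto{N}'(\bto{N}{\delta}-\optbto{N})$, while $\ellto{N}'(\optbto{N}-\bto{N}{\delta})$ appears here. These have the same norm for a symmetric fidelity such as $\frac12\norm{\freevar}^2$, and otherwise I would follow \cref{thm:reg:strong}, which uses them interchangeably. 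I would also note that the lemma's left-hand side should presumably carry the factor $\frac{1}{N+1}$, as in \eqref{eq:reg:strong:result}. Beyond these points there is no real obstacle, since the argument is the strong case with the $\norm{\freevar}_{N,\circ}$ term replaced by the distance term.
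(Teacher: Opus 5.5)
Your proposal is correct and follows essentially the same route as the paper. The paper likewise combines the accuracy bound \eqref{eq:reg:general:accuracy} with the semi-strong inequality \eqref{eq:subreg:strong-subdiff}, absorbs the linearised data term by Young's inequality as in \eqref{eq:reg:strong:base-est}, and finishes exactly as in \eqref{eq:reg:strong:final-est}; your readings of the statement's typos (membership in the neighbourhood of the point $\optxto{N}$, the missing factor $\frac{1}{N+1}$ on the left-hand side, and the denominator $2\subdifffactor{N}{\delta}^2$ in the $\delta$-term) are the right ones.
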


\begin{proof}
    As in \eqref{eq:reg:strong:base-est}, using the assumed $(A_N,\norm{\freevar}_{N,\circ})$-strong local $\epsilonerror{N}{\delta}$-subdifferentiability of $\Qto{N}{\delta}$ (\cref{ass:reg:semistrong-sc}), we estimate
    \begin{equation*}
        \begin{aligned}[t]
            \errto{N}{\delta} + \epsilonerror{N}{\delta}
             &
            \overset{\eqref{eq:reg:general:accuracy}}{\ge}
            \left( \Jto{N}{\delta}(\xsolto{N}{\delta}) +\alpha_{\delta} \Rto{N}(\xsolto{N}{\delta}) \right)
            -
            \left(\Jto{N}{\delta}(\optxto{N})+\alpha_{\delta} \Rto{N}(\optxto{N}) \right)
            +
            \epsilonerror{N}{\delta}
            \\
             &
            \overset{\eqref{eq:subreg:strong-subdiff}}{\ge}
            \left( \dualprod{\Jto{N}{\delta}'(\optxto{N}) - \alpha_{\delta}  \Ato{N}'(\optxto{N})^*\optwto{N}}{\xsolto{N}{\delta} - \optxto{N}}_{\Xto{N}^*,\Xto{N}}
            +
            \subdifffactor{N}{\delta} \norm{\Ato{N}'(\optxto{N})(\xsolto{N}{\delta} - \optxto{N})}_{\Yto{N}}^2\right)
            \\
            \MoveEqLeft[-1]
            +
            \subdifffactor{N}{\delta} \dist_{N,\circ}^2(\xsolto{N}{\delta}, \optXto{N})
            \\
             &
            =
            \left(  \iprod{\ellto{N}'(\optbto{N}-\bto{N}{\delta})-\alpha_{\delta} \optwto{N}}{\Ato{N}'(\optxto{N})(\xsolto{N}{\delta}-\optxto{N})}_{\Yto{N}}
            +
            \subdifffactor{N}{\delta} \norm{\Ato{N}'(\optxto{N})(\xsolto{N}{\delta}-\optxto{N})}_{\Yto{N}}^2
            \right)
            \\
            \MoveEqLeft[-1]
            +
            \subdifffactor{N}{\delta} \dist_{N,\circ}^2(\xsolto{N}{\delta}, \optXto{N})
            \\
             &
            \ge
            -
            \frac{1}{4\subdifffactor{N}{\delta}}\norm{\ellto{N}'(\optbto{N}-\bto{N}{\delta})-\alpha_{\delta} \optwto{N}}_{\Yto{N}}^2
            +
            \subdifffactor{N}{\delta} \dist_{N,\circ}^2(\xsolto{N}{\delta}, \optXto{N}) .
        \end{aligned}
    \end{equation*}
    Now estimating as in \eqref{eq:reg:strong:final-est} yields the claim.
\end{proof}

\begin{theorem}
    \label{thm:reg:semistrong-limit}
    Suppose \cref{ass:reg:general,ass:reg:semistrong-misc} hold.
    If, for $\mathscr{E}_{i,\delta}$ defined in \eqref{eq:reg:lim-quantities},
    \begin{equation}
        \label{eq:reg:semistrong-limit}
        \lim_{\delta \downto 0} \max\{\mathscr{E}_{1,\delta}, \mathscr{E}_{2,\delta}, \mathscr{E}_{3,\delta}\} = 0,
    \end{equation}
    then
    \[
        \lim_{\delta \downto 0} \lim_{N\to\infty} \frac{1}{N+1}\dist_{N,\circ}(\xsolto{N}{\delta}, \optXto{N})=0.
    \]
\end{theorem}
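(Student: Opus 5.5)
The proof has two steps, mirroring \cref{cor:reg:strong-limit}. First, a containment step in the spirit of \cref{lemma:reg:containment}: for small $\delta$, the inexact solution $\xsolto{N}{\delta}$ lies in the neighbourhood $\Ux{\bar x^N}$ of some point $\bar x^N \in \barXto{N}{\delta}$. Second, we apply \cref{lemma:reg:semistrong-dist} at that point and pass to the limits.

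\emph{Containment.} By \cref{thm:reg:a-convergence},
\[
    \frac{1}{N+1}\ellto{N}(\Ato{N}(\xsolto{N}{\delta}) - \Ato{N}(\optxto{N})) \le \alpha_\delta C \boundconst{N}{\delta}
    \quad\text{and}\quad
    \frac{1}{N+1}\Rto{N}(\xsolto{N}{\delta}) \le \boundconst{N}{\delta}.
\]
By \cref{ass:reg:semistrong-misc}\,\cref{item:reg:semistrong-misc:boundconst-limit}, $\alpha_\delta \to 0$ and $\boundconst{N}{\delta} - \frac{1}{N+1}\Rto{N}(\optxto{N})$ tends to zero, in the iterated limit $N\to\infty$, then $\delta \downto 0$. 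Hence there is a $\bar\delta$ such that for every $\delta<\bar\delta$ there exists an $N_\delta$ with $\xsolto{N}{\delta} \in \Urho{N}$ for all $N \ge N_\delta$. Since only $\lim_{N}$ is assumed rather than $\sup_N$, we must work with an $N$-threshold depending on $\delta$; this is harmless because the conclusion only involves $\lim_{N\to\infty}$ at fixed $\delta$. The covering in \cref{ass:reg:semistrong-misc}\,\cref{item:reg:semistrong-misc:containment} then supplies some $\bar x^N \in \barXto{N}{\delta} \subset \optXto{N}$ with $\xsolto{N}{\delta} \in \Ux{\bar x^N}$, and \cref{ass:reg:semistrong-sc} holds at $\bar x^N$ with the common factor $\subdifffactor{N}{\delta}$ and threshold $\delta^*$.

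\emph{Estimate at the chosen point.} We invoke \cref{lemma:reg:semistrong-dist} with $\bar x^N$ in place of $\optxto{N}$. Because $\bar x^N \in \optXto{N}$, the accuracy condition \eqref{eq:reg:general:accuracy} holds for it as well, since the paper states it for "some (and hence any)" minimum-$\Rto{N}$ solution. This gives
\[
    \dist_{N,\circ}^2(\xsolto{N}{\delta}, \optXto{N})
    \le
    \frac{\errto{N}{\delta}+\epsilonerror{N}{\delta}}{(N+1)\subdifffactor{N}{\delta}}
    + \frac{\delta}{2\subdifffactor{N}{\delta}^2}
    + \frac{\alpha_{\delta}^2}{2(N+1)\subdifffactor{N}{\delta}^2} \norm{\bar w^N}_{\Yto{N}}^2,
\]
where $\bar w^N$ is the source element associated with $\bar x^N$. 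Taking $\lim_{N\to\infty}$, the right-hand side is bounded by $\mathscr{E}_{1,\delta}+\tfrac12\mathscr{E}_{2,\delta}+\tfrac12\mathscr{E}_{3,\delta}$. By \eqref{eq:reg:semistrong-limit} this tends to $0$ as $\delta\downto0$.

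Finally, the statement concerns $\frac{1}{N+1}\dist_{N,\circ}$, not the square. For $N \ge 1$ we have $\frac{1}{N+1}\dist_{N,\circ} \le \frac{1}{\sqrt{N+1}}\dist_{N,\circ} = \bigl(\frac{1}{N+1}\dist_{N,\circ}^2\bigr)^{1/2}$, so convergence of the averaged squared distance yields the claim.

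\emph{Main obstacle.} The delicate point is the dependence of $\mathscr{E}_{3,\delta}$ on the source element. It is defined via $\optwto{N}$, whereas the point $\bar x^N$ produced by the covering varies with $N$ and $\delta$ and carries its own $\bar w^N$. I would handle this by interpreting \eqref{eq:reg:lim-quantities} uniformly over $\barXto{N}{\delta}$, i.e.\ replacing $\norm{\optwto{N}}$ by $\sup_{\bar x^N\in\barXto{N}{\delta}}\norm{\bar w^N}$, which is implicit in the uniform-constants requirement of \cref{ass:reg:semistrong-misc}. The dependence of $\epsilonerror{N}{\delta}$ on the chosen point needs the same uniform treatment.
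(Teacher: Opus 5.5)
Your proposal is correct and uses the same ingredients as the paper's proof: containment in $\Urho{N}$ for large $N$ via \cref{thm:reg:a-convergence} and \cref{ass:reg:semistrong-misc}\,\cref{item:reg:semistrong-misc:boundconst-limit}, the covering of \cref{ass:reg:semistrong-misc}\,\cref{item:reg:semistrong-misc:containment}, and \cref{lemma:reg:semistrong-dist} applied at the covering point; you argue directly, whereas the paper puts the same steps inside a contradiction argument along sequences $\delta_j \downto 0$ and $N_{j,k} \to \infty$. The paper also uses, only implicitly, both the uniformity over $\barXto{N}{\delta}$ of $\optwto{N}$ and $\epsilonerror{N}{\delta}$ that you flag and the square-root step from the averaged squared distance to $\frac{1}{N+1}\dist_{N,\circ}$; note that \cref{lemma:reg:semistrong-dist} really bounds the averaged squared distance, since its printed left-hand side drops the factor $\frac{1}{N+1}$.
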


\begin{proof}
    Suppose, to reach a contradiction, that there exist
    $\varepsilon > 0$ and a sequence $\delta_j \downto 0$
    such that for every $j \in \N$ there exists a sequence
    $\{N_{j,k}\}_{k \in \N}$, $\lim_{k \to \infty} N_{j,k} = \infty$, satisfying
    \[
        \frac{1}{N_{j,k}+1}
        \dist_{N,\circ}(x_{\delta_j}^{N_{j,k}}, \hat X^{N_{j,k}})
        \ge \varepsilon.
    \]
    We may assume that $\delta_{j_0} \le \delta^*$ for $\delta^*$ given by \cref{ass:reg:semistrong-sc}\,\cref{item:reg:semistrong-sc:subdiff}.
    By \cref{thm:reg:a-convergence} and \cref{ass:reg:semistrong-misc}\,\cref{item:reg:semistrong-misc:boundconst-limit}, there exists $j_0 \in \N$ and for all $j \ge j_0$ an index $k(j) \in \N$ such that
    $
        x_{\delta_j}^{N_{j,k}} \in \Urho{N_{j,k}}
    $
    for $k \ge k(j)$.
    By \cref{ass:reg:semistrong-misc}\,\cref{item:reg:semistrong-misc:containment}, we have $x_{\delta_j}^{N_{j,k}} \in \Ux[\delta_j]{\bar x}$ for some $\bar x \in \barXto{N_{j,k}}{\delta_j}$, for all  $k \ge k(j)$ and $j \ge j_0$.
    By \cref{lemma:reg:semistrong-dist,eq:reg:semistrong-limit}, we then find some $n(j) \ge k(j)$ that satisfies the contradiction
    \[
        \lim_{j \to \infty}
        \frac{1}{N_{j,n(j)} + 1} \dist_{N,\circ}(\hat X^{N_{j,n(j)}}, x^{N_{j,n(j)}}_{\delta_j}) = 0.
        \qedhere
    \]
\end{proof}

\section{Strong local subdifferentiability and infimal convolutions}
\label{sec:infconv}

We now provide tools to verify the subregularity assumptions of \cref{ass:reg:strong-misc,ass:reg:semistrong-sc} when $\Rto{N}$ arises from the algorithms of \cite{tuomov2024online-eit,online-tracking} as discussed in \cref{ex:reg:r}.
In particular, we show in \cref{sec:infconv:basic} that $\Rto{N}$ is a proper infimal convolutions, not merely a “sub-infimal convolution”. There we also provide several results of general interest in the analysis of infimal convolutions.
Then in \cref{sec:infconv:subreg} we treat ways to obtain the required subregularity.
Through out this section, $X$ is a normed space, unless otherwise indicated.

\subsection{Basic properties of infimal convolutions}
\label{sec:infconv:basic}

We recall that the infimal convolution $G \infconv H$ of two functions $G, H: X \to \extR$ is defined as
\[
    [G \infconv H](x) \defeq \inf_{\tilde x \in X}\left(
    G(\tilde x) + H(x-\tilde x)
    \right).
\]
In analogy with the Fenchel conjugate \eqref{eq:intro:fenchel-conjugate}, we also define for $E: X^* \to \extR$ the \term{Fenchel preconjugate} $E_*: X \to \extR$ through
\[
    E_*(x) \defeq \sup_{x^* \in X^*}\left( \dualprod{x^*}{x}_{X^*,X} - E(x^*)\right).
\]
Then the \term{biconjugate}
\[
    F^{**} \defeq (F^*)_* : X \to \extR.
\]
A well-known conjugate formula establishes $[G \infconv H]^* = G^* + H^*$; in finite dimensions, see, e.g., \cite[Theorem 11.23]{rockafellar-wets-va}.
It then follows that $[G \infconv H]^{**} = [G^* + H^*]_*$.
Now, if $G \infconv H$ is convex, proper, and lower semicontinuous, we have $[G \infconv H]^{**}=G \infconv H$ \cite[Theorem 5.2]{clasonvalkonen2020nonsmooth}, so obtain $G \infconv H = [G^* + H^*]_*$. It is easy to see that $G \infconv H$ is convex and proper if $G$ and $F$ are, however, the lower semicontinuity requires further assumptions.
See \cite[Theorem 11.23]{rockafellar-wets-va} in finite dimensions, and \cite[Proposition 12.14]{bauschke2017convex} in Hilbert spaces.
Since the result is not easily found in the literature in normed spaces, we include it here in full together with assumptions that are practical in our applications.

\begin{lemma}
    \label{lemma:infconv:inverse-formula}
    Let $G, H: X \to \extR$ be convex and proper. Then $G \infconv H$ is convex and proper.

    If, moreover, $H$ or $G$ is Lipschitz continuous, then $G \infconv H$ is also lower semicontinuous; in consequence, $G \infconv H = [G^* + H^*]_*$.
\end{lemma}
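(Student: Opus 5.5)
The plan is to treat the three claims (convexity and properness, lower semicontinuity, the conjugate formula) in turn, each by elementary arguments directly from the definitions. I use the symmetry $G \infconv H = H \infconv G$ (substitute $\tilde x \mapsto x - \tilde x$) to assume throughout that $H$ is the Lipschitz function.

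\emph{Convexity and properness.} For convexity, fix $x_1, x_2 \in X$, $\lambda \in [0,1]$, and reals $t_i > [G \infconv H](x_i)$. Choose $\tilde x_i$ with $G(\tilde x_i) + H(x_i - \tilde x_i) < t_i$. Then test the infimum at $\lambda x_1 + (1-\lambda)x_2$ with the point $\lambda\tilde x_1 + (1-\lambda)\tilde x_2$. Convexity of $G$ and $H$ gives the value $< \lambda t_1 + (1-\lambda) t_2$, which is convexity of the strict epigraph and hence of $G \infconv H$.

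For $G \infconv H \not\equiv +\infty$, take $x_G \in \Dom G$ and $x_H \in \Dom H$; then $[G \infconv H](x_G + x_H) \le G(x_G) + H(x_H) < \infty$.

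The remaining part of properness, that $G \infconv H > -\infty$, is where I expect the main obstacle. It does not follow from convexity and properness alone: for instance $G = \dualprod{a}{\freevar}$ and $H = -\dualprod{a}{\freevar}$ give $-\infty$. I would therefore first show the dichotomy that $G \infconv H$ is either finite everywhere or $\equiv -\infty$; this comes out of the Lipschitz estimate below. To exclude $-\infty$, I would use a common continuous affine minorant. If $G \ge \dualprod{x^*}{\freevar} + c_1$ and $H \ge \dualprod{x^*}{\freevar} + c_2$ for the same $x^* \in X^*$, equivalently $\Dom G^* \isect \Dom H^* \ne \emptyset$, then
\[
    G(\tilde x) + H(x - \tilde x) \ge \dualprod{x^*}{x} + c_1 + c_2,
\]
so the infimum is finite. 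I would state this as the (mild, and in our applications immediate since $G,H \ge 0$) extra condition under which properness holds, or otherwise interpret properness as ``not identically $+\infty$''.

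\emph{Lower semicontinuity.} Let $H$ be $L$-Lipschitz. For any $x, y, \tilde x$,
\[
    [G \infconv H](x) \le G(\tilde x) + H(y - \tilde x) + L\norm{x-y}.
\]
Taking the infimum over $\tilde x$ gives $[G \infconv H](x) \le [G \infconv H](y) + L\norm{x-y}$, and by symmetry in $x$ and $y$ the reverse inequality also holds. Hence $G \infconv H$ is either identically $-\infty$ or finite and $L$-Lipschitz. In particular it is continuous, so lower semicontinuous; this same estimate supplies the dichotomy used above.

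\emph{Conjugate formula.} First compute $[G \infconv H]^* = G^* + H^*$ directly. Interchange the suprema in
\[
    \sup_{x,\tilde x}\left( \dualprod{x^*}{x} - G(\tilde x) - H(x - \tilde x) \right)
\]
and substitute $z = x - \tilde x$, which splits it as $G^*(x^*) + H^*(x^*)$. Then apply the Fenchel–Moreau theorem in normed spaces \cite[Theorem 5.2]{clasonvalkonen2020nonsmooth} to the convex, proper, lower semicontinuous function $G \infconv H$. This yields
\[
    G \infconv H = [G \infconv H]^{**} = [G^* + H^*]_*,
\]
as claimed.
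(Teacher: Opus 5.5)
Your proof is correct and follows the same skeleton as the paper's. Convexity comes from the joint convexity of $(\tilde x, x) \mapsto G(\tilde x) + H(x-\tilde x)$ via near-minimisers. Lower semicontinuity comes from transferring the Lipschitz bound of $H$: the paper argues by contradiction along a sequence $x_n \to x$, while your two-sided estimate is more direct and gives the stronger conclusion that $G \infconv H$ is either $L$-Lipschitz or $\equiv -\infty$. The formula then follows from $[G \infconv H]^* = G^* + H^*$ and Fenchel--Moreau.

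Where you go beyond the paper is properness. Its proof only exhibits a point of $\Dom [G \infconv H]$ and never excludes the value $-\infty$. Your example $G = \dualprod{a}{\freevar}$, $H = -\dualprod{a}{\freevar}$, in which both functions are Lipschitz, shows that the properness claim is false as stated. Your hypothesis $\Dom G^* \isect \Dom H^* \ne \emptyset$ is therefore a genuine correction rather than a convenience. When one of the functions is Lipschitz, it is also necessary: a finite, continuous, convex $G \infconv H$ has a subgradient $x^*$ at any point, and then $x^* \in \Dom [G \infconv H]^* = \Dom G^* \isect \Dom H^*$.

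Your alternative reading of properness as ``not identically $+\infty$'' needs one more line. There, Fenchel--Moreau does not literally apply in the degenerate case $G \infconv H \equiv -\infty$. The formula still holds in that case, because $G^* + H^* = [G \infconv H]^* \equiv +\infty$ gives $[G^* + H^*]_* \equiv -\infty$. So the conjugate formula needs only the Lipschitz assumption, and your extra condition is needed only for properness.
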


\begin{proof}
    By assumption, there exist some $\tilde x \in \Dom G$ and $x \in \Dom H$.
    Now, $x+\tilde x \in \Dom[G \infconv H]$, proving properness.
    Let then $\Psi(\tilde x, x) \defeq  G(\tilde x) + H(x-\tilde x)$ and $x_1,x_2\in X$ and $\lambda\in[0,1]$.
    The function $\Psi$ is convex, and there exist sequences $\{z^1_n\}_{n\in\N}$ and $\{z^2_n\}_{n\in\N}\subset X$ with $\Psi(x_1,z^1_n)\to[G \infconv H](x_1)$ and $\Psi(x_2,z^2_n)\to[G \infconv H](x_2)$.
    Convexity now follow from passing to the limit in
    \[
        [G \infconv H](\lambda x_1 + (1-\lambda)x_2)
        \le
        \Psi(\lambda (x_1,z^1_n) + (1-\lambda) (x_2,z^2_n) )
        \le
        \lambda \Psi(x_1,z^1_n) + (1-\lambda)\Psi(x_2,z^2_n).
    \]
    Finally, we need to show that $[G \infconv H](x) \le \liminf_{n \to \infty} [G \infconv H](x_n)$ for all converging sequences $x_n \to x$.
    Because $G \infconv H = H \infconv G$, we only need to consider the case that $H$ is Lipschitz continuous.
    Suppose, to reach a contradiction, that there exist $\epsilon \ge 0$ and $x_n \to x$ such that $[G \circ H](x) \ge [G \circ H](x_n) + \epsilon$ with $[G \circ H](x_n)<\infty$.
    By the definition of the infimal convolution, there then exists $z_n \in X$ such that $[G \circ H](x_n) \ge \Psi(x_n, z_n) - 1/n$.
    It follows that
    \[
        [G \circ H](x) \ge \Psi(x_n, z_n) + \epsilon -1/n = G(z_n) + H(x_n - z_n) + \epsilon -1/n.
    \]
    Since $H$ is $L$-Lipschitz for some $L$, it follows that
    \[
        [G \circ H](x)
        \ge
        G(z_n) + H(x - z_n) - L\norm{x-x_n} + \epsilon -1/n
        =
        \Psi(x, z_n) - L\norm{x-x_n} + \epsilon -1/n.
    \]
    Thus there exists $N \in \N$ such that for $n \ge N$, we have $[G \circ H](x) \ge \Psi(x, z_n) + \epsilon/2 \ge [G \circ H](x) + \epsilon/2$. This is the desired contradiction.
\end{proof}

Our first contribution in this section shows that the spatiotemporal “sub-infimal convolutions” of \cref{ex:reg:r} reduce to infimal convolutions with respect to the support function of the convex hull of the dual dynamic constraint set.

\begin{lemma}
    \label{lemma:infconv:set-inverse-formula}
    Suppose the assumptions of \cref{lemma:infconv:inverse-formula} hold.
    Let $F=(G^* + \delta_U)_*$ for a convex, proper, and lower semicontinuous $G: X \to \extR$, and $\emptyset \ne U \subset X^*$.
    Then $F=[G \infconv \delta_{\closure \conv U}^*]^{**}$, where $\closure \conv U$ is the closed convex hull of $U$.

    If $G \infconv \delta_{\closure \conv U}^*$ is lower semicontinuous (e.g., the full conditions of \cref{lemma:infconv:inverse-formula} hold for $G$ and $H=\delta_{\closure \conv U}^*$) then $F=G \infconv \delta_{\closure \conv U}^*$.
\end{lemma}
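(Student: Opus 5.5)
The plan is to compute the Fenchel conjugate of $F$ and identify it with the conjugate of $G \infconv \delta_{\closure \conv U}^*$. Both sides will then be shown to be preconjugates of the same function.

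First, I note that the support function does not change when $U$ is replaced by its closed convex hull. Indeed, $\delta_U^* = \delta_{\closure\conv U}^*$, since $x \mapsto \sup_{u\in U}\dualprod{u}{x}$ is unchanged under convex combinations and weak-$*$ limits of elements of $U$. Here I read $\delta_U^*$ as the preconjugate on $X$ of the indicator on $X^*$, matching the usage in \cref{ex:reg:r}, and the closure is taken in the weak-$*$ topology.

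Second, I compute the conjugate of the infimal convolution using the formula $[G\infconv H]^* = G^* + H^*$, which holds in normed spaces by a direct interchange of suprema:
\[
    \sup_x \sup_{\tilde x}\bigl(\dualprod{x^*}{\tilde x} - G(\tilde x) + \dualprod{x^*}{x-\tilde x} - H(x-\tilde x)\bigr) = G^*(x^*) + H^*(x^*).
\]
Taking $H=\delta_{\closure\conv U}^*$ gives $H^* = (\delta_{\closure\conv U})_*{}^{*}$ restricted suitably, which equals $\delta_{\closure\conv U}$. This step uses that $\delta_{\closure\conv U}$ is convex, proper and weak-$*$ lower semicontinuous on $X^*$, so it equals its "bi-preconjugate". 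I expect this to be the main technical obstacle. The duality $(X^*, X)$ is not symmetric in a non-reflexive normed space, so I would invoke the Hahn–Banach separation theorem in $X^*$ with its weak-$*$ topology, whose dual is $X$. Hence
\[
    [G\infconv \delta_{\closure\conv U}^*]^* = G^* + \delta_{\closure\conv U},
\]
and taking preconjugates yields
\[
    [G\infconv \delta_{\closure\conv U}^*]^{**} = (G^*+\delta_{\closure\conv U})_*.
\]

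Third, I must show that $(G^*+\delta_U)_* = (G^*+\delta_{\closure\conv U})_*$, that is,
\[
    \sup_{x^*\in U}\bigl(\dualprod{x^*}{x}-G^*(x^*)\bigr) = \sup_{x^*\in \closure\conv U}\bigl(\dualprod{x^*}{x}-G^*(x^*)\bigr).
\]
The inequality $\le$ is trivial. The reverse inequality is the delicate point: the map $x^*\mapsto \dualprod{x^*}{x}-G^*(x^*)$ is concave and weak-$*$ upper semicontinuous, so its supremum over a convex hull need not equal its supremum over $U$. In general this step fails. I would therefore reinterpret the statement through the structure of \cref{ex:reg:r}, where the intended quantity is $\sup_{\tilde y\in U}\,\sup_{y}\bigl(\dots\bigr)$, i.e. the infimal convolution formula. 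An alternative route, which I would try first, is to write $F(x) = \inf_{\tilde x}\bigl(G(\tilde x)+\delta_U^*(x-\tilde x)\bigr)$ via a minimax argument. This would use $G^* = $ the conjugate of a convex lower semicontinuous $G$, so that $\dualprod{x^*}{x}-G^*(x^*) = \inf_{\tilde x}\bigl(G(\tilde x)+\dualprod{x^*}{x-\tilde x}\bigr)$. A sup–inf exchange would then be needed, justified by passing to $\closure\conv U$ and using convexity/compactness; I expect that exchange to be the crux. Combined with the first step, this gives $F\le$ and then $=[G\infconv\delta_{\closure\conv U}^*]^{**}$.

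Finally, if $G\infconv\delta_{\closure\conv U}^*$ is lower semicontinuous, then by \cref{lemma:infconv:inverse-formula} it is convex and proper as well. It therefore equals its biconjugate \cite[Theorem 5.2]{clasonvalkonen2020nonsmooth}, which gives $F=G\infconv\delta_{\closure\conv U}^*$.
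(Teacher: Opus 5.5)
Your first two steps reproduce the paper's reduction. Your insistence on the weak-$*$ closed convex hull is in fact what is needed for $(\delta^*_{\closure\conv U})^*=\delta_{\closure\conv U}$ when $X$ is not reflexive. Everything then rests on $(G^*+\delta_U)_*=(G^*+\delta_{\closure\conv U})_*$, and your diagnosis of this third step is correct. The map $x^*\mapsto\dualprod{x^*}{x}-G^*(x^*)$ is concave and weak-$*$ upper semicontinuous, and convexifying the constraint set can increase the supremum of a concave function. The paper's proof passes over this with the claim that the supremum of a lower semicontinuous concave function over $U$ equals its supremum over $\closure\conv U$. That holds for lower semicontinuous \emph{convex} (or continuous affine) functions, not concave ones.

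In fact the statement is false, even under the full hypotheses of \cref{lemma:infconv:inverse-formula}. Take $X=\R$, $G=\frac12|\freevar|^2=G^*$ and $U=\{-1,1\}$. Then $F(x)=\max_{u=\pm1}(ux-\frac12)=|x|-\frac12$. On the other side, $\closure\conv U=[-1,1]$, $\delta^*_{[-1,1]}=|\freevar|$ is Lipschitz, and $G\infconv|\freevar|$ is the Huber function. The Huber function is convex and continuous, hence equal to its biconjugate, and it vanishes at $0$, while $F(0)=-\frac12$.

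So no argument can complete your proposal, and neither fallback you suggest works. ``Reinterpreting'' the lemma through \cref{ex:reg:r} changes what is being proved. The minimax route fails at exactly the same place. Set $L(x^*,\tilde x)\defeq G(\tilde x)+\dualprod{x^*}{x-\tilde x}$, which is affine in $x^*$. The value $\inf_{\tilde x}\sup_{x^*}L$ is therefore the same over $U$ and over $\closure\conv U$, and equals $[G\infconv\delta^*_{\closure\conv U}](x)$. The quantity that changes is $\sup_{x^*\in U}\inf_{\tilde x}L=F(x)$. A minimax theorem applied on $\closure\conv U$ only re-proves $(G^*+\delta_{\closure\conv U})_*=G\infconv\delta^*_{\closure\conv U}$ and says nothing about $F$.

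What your argument legitimately yields is the inequality $F\le[G\infconv\delta^*_{\closure\conv U}]^{**}\le G\infconv\delta^*_{\closure\conv U}$, i.e.\ your ``trivial'' direction. Equality needs an extra hypothesis, for example $U$ convex and weak-$*$ closed, or $G^*$ finite and constant on $\closure\conv U$ (such as $G^*=\delta_B$ with $U\subset B$, the total-variation case). The same issue affects \cref{cor:infconv:fullset-inverse-formula}, where the correct set is $\closure\conv(B\isect U)$ rather than $B\isect\closure\conv U$. Your closing sentence should therefore stop at ``$F\le$''.
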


\begin{proof}
    The supremum of a lower semicontinuous concave function over a possibly nonconvex set equals its supremum over the closed convex hull of that set.
    Therefore,
    \[
        \begin{split}
            (G^* + \delta_U)^*(x)
             &
            =
            \sup_{x^* \in X^*} \left(
            \dualprod{x^*}{x}
            - G^*(x^*) - \delta_U(x^*)
            \right)
            \\
             &
            =
            \sup_{x^* \in X^*} \left(
            \dualprod{x^*}{x}
            - G^*(x^*) - \delta_{\closure \conv U}(x^*)
            \right)
            \\
             &
            =
            [G^* + \delta_{\closure \conv U}]_*(x)
            =
            [G \infconv \delta_{\closure \conv U}^*]^{**}(x).
        \end{split}
    \]
    By \cref{lemma:infconv:inverse-formula}, $G \infconv \delta_{\closure \conv U}^*$ is convex and proper.
    When it is also lower semicontinuous, it equals its biconjugate \cite[Theorem 5.2]{clasonvalkonen2020nonsmooth}.
\end{proof}

\begin{corollary}
    \label{cor:infconv:fullset-inverse-formula}
    Let $F=(\delta_B + \delta_U)_*$ for a non-empty $U \subset X^*$ and a closed, convex, non-empty $B \subset X^*$.
    Then $F=\delta_{B \isect \closure \conv U}^*$.
\end{corollary}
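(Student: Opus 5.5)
Throughout, the preconjugate is taken on the dual $X^*$, as in the definitions above. The plan is to compute $F = (\delta_B + \delta_U)_*$ directly from the definition of the preconjugate, rather than going through \cref{lemma:infconv:set-inverse-formula}. That route would need $G=\delta_B^{\,*}$-type identifications together with lower semicontinuity of an infimal convolution, which the direct computation avoids. The direct computation is
\[
    F(x) = \sup_{x^* \in X^*}\bigl(\dualprod{x^*}{x} - \delta_B(x^*) - \delta_U(x^*)\bigr) = \sup_{x^* \in B \isect U} \dualprod{x^*}{x}.
\]
So $F$ is the support function of $B \isect U$, while the claim is that it equals $\sup_{x^* \in B \isect \closure\conv U}\dualprod{x^*}{x}$, the support function of $B \isect \closure \conv U$.

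The inequality $F \le \delta_{B\isect\closure\conv U}^*$ is immediate, since $U \subset \closure\conv U$. The reverse inequality is the main obstacle. In general the support function of $B \isect U$ is strictly smaller than that of $B \isect \closure \conv U$. For example, take $U$ to be two points outside $B$ whose segment crosses $B$: then $B\isect U=\emptyset$ while $B \isect \conv U \neq \emptyset$. So intersecting with $B$ does not commute with taking convex hulls.

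Following the argument of \cref{lemma:infconv:set-inverse-formula}, I would first try to justify replacing $\delta_U$ by $\delta_{\closure\conv U}$ inside the supremum. The reasoning there is that one maximises the concave, upper semicontinuous function $x^* \mapsto \dualprod{x^*}{x} - \delta_B(x^*)$ over $U$. Here that function is $\dualprod{x^*}{x}$ on $B$ and $-\infty$ off $B$. The sup-over-hull principle is, however, only valid for \emph{linear} (or convex) objectives, not for concave ones. Hence this step is exactly where the counterexample bites.

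My plan is therefore twofold. First, I would check whether the intended reading is the one inherited from \cref{lemma:infconv:set-inverse-formula} with $G=\delta_B^*$ (so $G^*=\delta_B$). That would give $F=[\delta_B^* \infconv \delta_{\closure\conv U}^*]^{**}$, and $[\delta_B^*\infconv\delta_{C}^*]^{**} = (\delta_B+\delta_C)_* = \delta^*_{B\isect C}$ for $C=\closure\conv U$, using $\delta_B+\delta_C = \delta_{B \isect C}$ and \cref{lemma:infconv:inverse-formula} for the conjugate of a sum. Second, I would accept that this chain inherits the same gap at the first equality of that lemma. So I expect to need an extra hypothesis making $B\isect U$ and $B\isect \closure\conv U$ have the same closed convex hull, for instance $U$ convex, or $U\subset B$, or $B = X^*$. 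Under such a hypothesis the final step is the standard fact that a support function is unchanged by passing to the closed convex hull: $\sup_{S}\dualprod{\freevar}{x}=\sup_{\closure\conv S}\dualprod{\freevar}{x}$, by linearity and continuity in $x^*$.
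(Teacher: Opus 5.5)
\textbf{Verdict.} Your diagnosis is correct: the statement is false as written, and the paper's proof fails at exactly the step you flagged.

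\textbf{Where the paper's proof breaks.} The paper takes the route you anticipated. It notes that $\delta_B+\delta_{\closure\conv U}=\delta_{B\isect\closure\conv U}$ is lower semicontinuous, and then invokes \cref{lemma:infconv:set-inverse-formula} with $G=(\delta_B)_*$. Everything therefore rests on the first equality in that lemma's proof,
\[
\sup_{x^*\in U}\bigl(\dualprod{x^*}{x}-G^*(x^*)\bigr)=\sup_{x^*\in\closure\conv U}\bigl(\dualprod{x^*}{x}-G^*(x^*)\bigr).
\]
The lemma justifies this by claiming that a concave semicontinuous function has the same supremum over a set as over its closed convex hull. As you say, that principle holds for convex lower semicontinuous (in particular continuous linear) objectives, but not for concave ones. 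With $G^*=\delta_B$, it amounts to replacing $B\isect U$ by $B\isect\closure\conv U$.

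\textbf{Counterexamples.} Directly from the definition, $F=\sup_{x^*\in B\isect U}\dualprod{x^*}{\freevar}$. This is the support function of $\closure\conv(B\isect U)$, which can be strictly smaller than that of $B\isect\closure\conv U$. Your two-point example is valid; there $B\isect U=\emptyset$, so $F\equiv-\infty$. To see that the failure is not merely a degeneracy of improper functions, take $X=\R$, $B=[-1,1]$ and $U=\{0,2\}$. Then $B\isect U=\{0\}$ gives $F\equiv 0$, whereas $B\isect\closure\conv U=[0,1]$ gives $\delta^*_{B\isect\closure\conv U}(x)=\max\{x,0\}$.

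\textbf{Correction to your repairing hypotheses.} Convexity of $U$ alone is not enough, because the closure still matters. With $U=(0,1)$ and $B=[-1,0]$ in $\R$, we get $B\isect U=\emptyset$ while $B\isect\closure U=\{0\}$. You need $U$ closed and convex. Your conditions $U\subset B$ and $B=X^*$ are fine, and $U\supset B$ also works. The general criterion is $\closure\conv(B\isect U)=B\isect\closure\conv U$. Without it, the correct conclusion is only $F=\delta^*_{\closure\conv(B\isect U)}$.
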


\begin{proof}
    We have
    $
        \delta_B + \delta_{\closure \conv U} = \delta_{B \isect \closure \conv U},
    $
    which is lower semicontinuous by the closedness of $B \isect \closure \conv U$.
    The claim now follows from \cref{lemma:infconv:set-inverse-formula} with $G=(\delta_B)_*$.
\end{proof}

\begin{example}
    \label{ex:subreg:r-improvement}
    In \cref{ex:reg:r}, suppose the $G= \Tto{N}$ is Lipschitz continuous, and $H=\delta_{\closure \conv \DpredictConstrTo{N}}^*$.
    Then $\mathring T_{:N} = \Tto{N} \infconv \delta_{\closure \conv \DpredictConstrTo{N}}^*$,
    hence
    \[
        \Rto{N}(\xto{N}) =  [\Tto{N} \infconv \delta_{\closure \conv \DpredictConstrTo{N}}^*](\Kto{N} \xto{N}).
    \]
    In particular, with $Z$ a normed space, let
    \[
        T_{:N}: (Z^*)^N \to \R,
        \quad
        T_{:N}(\zto{N})=\sum_{k=0}^N \norm{\ztoind{N}{k}}_\bullet = (\delta_{B^{N+1}})_*(\zto{N})
    \]
    for some norm $\norm{\freevar}_\bullet=(\delta_B)_*$ in $Z^*$ with the corresponding dual unit ball $B$.
    Then
    \[
        \Rto{N}(\xto{N}) =  \delta_{B^{N+1} \isect \closure \conv \DpredictConstrTo{N}}^*(\Kto{N} \xto{N})
        =
        \sup_{\yto{N} \in B^{N+1} \isect \closure \conv \DpredictConstrTo{N}} \iprod{\yto{N}}{\Kto{N} \xto{N}}.
    \]
    If $\closure \conv \DpredictConstrTo{N} \supset \B^{N+1}$, we obtain $\Rto{N}(\xto{N})=\sum_{k=0}^N \norm{K_k \xtoind{N}{k}}_\bullet$.
    When we take $K_k=\grad$ to model total variation, this says that the spatiotemporal regulariser $\Rto{N}$ still penalises the gradients, but the dual temporal dynamics $\closure \conv \DpredictConstrTo{N}$ restrict the penalisation amount.
\end{example}

We also require a characterisation of the subdifferentials of the infimal convolution.
In Hilbert spaces, the following result can be found in \cite[Proposition 16.48]{bauschke2017convex}.

\begin{lemma}
    \label{lemma:subreg:infconv-subdiff}
    Let $F=G \infconv H$ for a convex, proper, and lower semicontinuous $G,H:X \to \extR$.
    Then
    \[
        \begin{split}
            \subdiff F(x)
             &
            \supset
            \{
            x^* \in X^*
            \mid
            x^* \in \subdiff G(\tilde x) \isect \subdiff H(x-\tilde x)
            \text{ for some } \tilde x \in X
            \}
            \\
             &
            =
            \{
            x^* \in X^*
            \mid
            x^* \in \subdiff G(\tilde x) \isect \subdiff H(x-\tilde x),\,
            F(x) = G(\tilde x) + H(x-\tilde x)
            \text{ for some } \tilde x \in X
            \}.
        \end{split}
    \]
    Equality holds when there exists some $x_0^* \in \interior \Dom G^* \isect \Dom H^*$.
\end{lemma}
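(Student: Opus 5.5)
The inclusion is proved directly from the definition of the subdifferential, the equality of the two descriptions by a Fenchel–Young argument, and the reverse inclusion under the interiority condition through the conjugate formula $F^* = G^* + H^*$ together with a sum rule for conjugates.

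\textbf{Inclusion.} Let $x^* \in \subdiff G(\tilde x) \isect \subdiff H(x-\tilde x)$ for some $\tilde x \in X$. For arbitrary $z \in X$ and $\tilde z \in X$, the two subgradient inequalities give
\[
G(\tilde z) + H(z-\tilde z) \ge G(\tilde x) + H(x-\tilde x) + \dualprod{x^*}{\tilde z-\tilde x} + \dualprod{x^*}{(z-\tilde z)-(x-\tilde x)}.
\]
The inner products combine to $\dualprod{x^*}{z-x}$. Taking the infimum over $\tilde z$ yields
\[
F(z) \ge G(\tilde x)+H(x-\tilde x) + \dualprod{x^*}{z-x}.
\]
Choosing $z=x$ shows $F(x) \ge G(\tilde x)+H(x-\tilde x)$. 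The reverse inequality holds by the definition of the infimum, so $F(x) = G(\tilde x)+H(x-\tilde x)$ is finite. Substituting this back gives $x^*\in\subdiff F(x)$.

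\textbf{Equality of the two sets.} The argument above also shows that every element of the first set satisfies the exactness condition $F(x)=G(\tilde x)+H(x-\tilde x)$. Hence the first set is contained in the second. The converse containment is trivial, since the second set only adds a condition.

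\textbf{Reverse inclusion under the qualification.} Take $x^*\in\subdiff F(x)$, so $F(x)$ is finite. Fenchel–Young equality gives
\[
F(x)+F^*(x^*)=\dualprod{x^*}{x}.
\]
I use $F^*=G^*+H^*$, which holds for any infimal convolution by direct computation and needs no lower semicontinuity.

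The key step is the conjugate identity
\[
[G^*+H^*]^{*} = G^{**}\infconv H^{**} \text{ on } X^{**}, \text{ with exact infimum,}
\]
restricted to $X$ via the canonical embedding. It follows from the Fenchel–Rockafellar/Attouch–Brézis type sum rule, valid when $G^*$ is continuous at a point of $\Dom H^*$. This is exactly what $x_0^*\in\interior\Dom G^*\isect\Dom H^*$ provides, because a convex function that is lower semicontinuous in the weak-$*$ sense on a normed dual is continuous on the interior of its domain. Continuity on $\interior \Dom G^*$ in the norm topology needs a Baire argument, so it holds when $X^*$ is complete, which it always is. Equivalently, the sum rule $\subdiff(G^*+H^*)=\subdiff G^*+\subdiff H^*$ holds under this condition.

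Using the sum-rule form, the Fenchel–Young equality for $F$ reads $x\in\subdiff F^*(x^*)=\subdiff(G^*+H^*)(x^*)$, where $x$ is viewed in $X^{**}$. The sum rule gives $x = u+v$ with $u\in\subdiff G^*(x^*)$ and $v\in\subdiff H^*(x^*)$, a priori in $X^{**}$.

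\textbf{Main obstacle.} The hardest step is ensuring that $u,v$ lie in $X$ rather than merely in $X^{**}$. To handle this, I would avoid the dual sum rule and work primally. I would apply the Fenchel duality theorem to $\inf_{\tilde z}\bigl(G(\tilde z)-\dualprod{x^*}{\tilde z}\bigr)+\bigl(H(x-\tilde z)-\dualprod{x^*}{x-\tilde z}\bigr)$, whose dual problem has attainment under the qualification. I expect the qualification to give only dual attainment, not primal attainment. So I would instead argue as follows. Once $u\in X^{**}$ with $u\in\subdiff G^*(x^*)$ is found, set $\tilde x\defeq u$. If $X$ is reflexive, $u\in X$ directly. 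Otherwise, I would invoke weak-$*$ lower semicontinuity of $G^*,H^*$ on $X^*$ to show that the subgradients can be chosen in $X$.

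Given $\tilde x\in X$ with $x^*\in\subdiff G^*$ relations, Fenchel–Young equality for $G$ and $H$ (using $G=G^{**}$ and $H=H^{**}$ on $X$ by lower semicontinuity) gives $x^*\in\subdiff G(\tilde x)\isect\subdiff H(x-\tilde x)$, which proves the equality.
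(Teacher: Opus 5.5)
Your inclusion argument is correct and more direct than the paper's. Adding the two subgradient inequalities and minimising over $\tilde z$ avoids the paper's passage through $x\in\subdiff F^*(x^*)$, whose return step tacitly needs $F(x)=F^{**}(x)$. It also proves the equality of the two sets, which the paper never argues.

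The gap is in the reverse inclusion. There you follow the paper's route: Fenchel--Young, $F^*=G^*+H^*$, and the sum rule for $\subdiff(G^*+H^*)$ under $x_0^*\in\interior\Dom G^*\isect\Dom H^*$. You rightly notice what the paper glosses over: this only produces $u\in\subdiff G^*(x^*)$ and $v\in\subdiff H^*(x^*)$ in $X^{**}$. Your remedy, invoking weak-$*$ lower semicontinuity to choose them in $X$, cannot work, because the equality is false for non-reflexive $X$. The paper's own proof has the same hole.

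Here is a counterexample. Take $X=c_0$, $f=(2^{-n})_{n\ge 1}\in\ell^1=X^*$, $G=\norm{\freevar}_\infty$ and $H=\delta_C$ with $C=\ker f$. Then $G^*=\delta_{\{\norm{\freevar}_1\le 1\}}$ and $H^*=\delta_{\R f}$, so $x_0^*=0$ satisfies the qualification, and $F=\dist(\freevar,C)=|f(\freevar)|$. At $x=e_1$ we have $f\in\subdiff F(x)$. However, $f\in\subdiff G(\tilde x)\isect\subdiff H(x-\tilde x)$ would require $\sum_n 2^{-n}\tilde x_n=f(\tilde x)=\frac{1}{2}=\norm{\tilde x}_\infty$. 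That forces $\tilde x_n=\frac{1}{2}$ for all $n$, so $\tilde x\notin c_0$. Indeed, the dual sum rule returns $u=\frac{1}{2}(1,1,\ldots)\in\ell^\infty\setminus c_0$.

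What is missing is a hypothesis guaranteeing that the infimum defining $F(x)$ is attained in $X$. The simplest is reflexivity of $X$, under which your argument is complete since $X^{**}=X$. In that case there is also a purely primal route that bypasses the dual sum rule:
\begin{enumerate}[label=(\roman*)]
\item Since $G^*$ is bounded above by some $M$ on a ball $B(x_0^*,r)$, you get $G(z)\ge\dualprod{x_0^*}{z}+r\norm{z}-M$.
\item Together with $H\ge\dualprod{x_0^*}{\freevar}-H^*(x_0^*)$, this makes minimising sequences of $\tilde z\mapsto G(\tilde z)+H(x-\tilde z)$ bounded.
\item Weak compactness and weak lower semicontinuity then give a minimiser $\tilde x$.
\item Splitting $F(x)+F^*(x^*)=\dualprod{x^*}{x}$ into the two nonnegative Fenchel--Young gaps, of $G$ at $\tilde x$ and of $H$ at $x-\tilde x$, yields $x^*\in\subdiff G(\tilde x)\isect\subdiff H(x-\tilde x)$.
\end{enumerate}
Outside reflexive spaces, weak compactness of the sublevel sets of $G-\dualprod{x_0^*}{\freevar}$ suffices for the same argument.
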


\begin{proof}
    By the Fenchel–Young theorem \cite[Lemma 5.9]{clasonvalkonen2020nonsmooth}, $x^* \in \subdiff F(x)$ if and only if $x \in \subdiff F^*(x^*)$.
    We have $F^* = G^* + H^*$.
    By the sum rule for convex subdifferentials, $\subdiff F^*(x^*) \supset \subdiff G^*(x^*) + \subdiff H^*(x^*)$.
    Therefore $x^* \in \subdiff F(x)$ follows from $x \in \subdiff G^*(x^*) + \subdiff H^*(x^*)$, i.e., $\tilde x \in \subdiff G^*(x^*)$ and $x-\tilde x \in \subdiff H^*(x^*)$ for some $\tilde x$, i.e., $x^* \in \subdiff G(\tilde x) \isect \subdiff H(x-\tilde x)$.
    This proves the inclusion.

    The sum rule holds with equality when there exists some $x_0^* \in \interior \Dom G^* \isect \Dom H^*$. In that case the opposite implication also holds.
\end{proof}

It is not difficult to see that the infimal convolution of semi-norms is a semi-norm.
In fact, we can also square-and-root them:

\begin{lemma}
    \label{lemma:infconv:seminorm}
    Let $G, H: X \to \extR$ be semi-norms.
    Write $G^2(x) \defeq G(x)^2$.
    Then $F(x) \defeq \sqrt{[G^2 \infconv H^2](x)}$ is a semi-norm.
\end{lemma}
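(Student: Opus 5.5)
I would verify the three semi-norm properties directly from the definition
\[
    F(x)^2 = \inf_{\tilde x \in X}\left( G(\tilde x)^2 + H(x-\tilde x)^2 \right).
\]
Nonnegativity and finiteness are immediate: $G,H$ are finite and nonnegative, and taking $\tilde x = 0$ gives $F(x)^2 \le H(x)^2 < \infty$. Also $F(0)=0$, by taking $\tilde x=0$ again. Absolute homogeneity follows from a change of variables. For $\lambda \ne 0$, substitute $\tilde x = \lambda z$ to get
\[
    \inf_{z}\left(G(\lambda z)^2 + H(\lambda(x - z))^2\right) = \lambda^2 F(x)^2 .
\]
Taking square roots then gives $F(\lambda x) = |\lambda| F(x)$, and the case $\lambda=0$ is covered by $F(0)=0$.

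The substantive step is the triangle inequality $F(x+y) \le F(x) + F(y)$. Fix $\epsilon>0$ and choose $\tilde x, \tilde y$ with $G(\tilde x)^2 + H(x-\tilde x)^2 \le F(x)^2+\epsilon$, and similarly for $y$. Use the candidate $\tilde x + \tilde y$ for $x+y$. By the triangle inequality for $G$ and $H$,
\[
    F(x+y)^2 \le \left(G(\tilde x)+G(\tilde y)\right)^2 + \left(H(x-\tilde x) + H(y - \tilde y)\right)^2 .
\]
Write $a = (G(\tilde x), H(x-\tilde x))$ and $b = (G(\tilde y), H(y-\tilde y))$ as vectors in $\R^2$. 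The right-hand side is then $|a+b|^2$ for the Euclidean norm. Minkowski's inequality in $\R^2$ gives $|a+b| \le |a| + |b| \le \sqrt{F(x)^2+\epsilon} + \sqrt{F(y)^2+\epsilon}$. Letting $\epsilon \downto 0$ yields the claim.

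I expect the only point needing care to be this reduction to the Euclidean triangle inequality in $\R^2$; everything else is routine. Convexity of $F^2$ is not needed separately, since the triangle inequality together with homogeneity already gives convexity of $F$.
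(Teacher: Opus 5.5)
Your proof is correct and follows essentially the same route as the paper: combine (near-)optimal splittings of $x$ and $y$, use subadditivity of $G$ and $H$, and conclude with the triangle inequality in $\R^2$. The only difference is that you get absolute homogeneity separately, via the change of variables $\tilde x=\lambda z$, which covers negative $\lambda$ directly; the paper instead folds positive homogeneity into the same computation with $x+\lambda z$ for $\lambda>0$.
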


\begin{proof}
    The definition of the infimal convolution immediately establishes non-negativity.
    For any $x, z \in X$ and $\lambda>0$, since $F$ and $G$ are positively homogeneous and satisfy the triangle inequality,
    \[
        \begin{split}%
            [G^2 \infconv H^2](x + \lambda z)
             &
            =
            \inf_{\tilde x, \tilde z \in X}\left(G(\tilde x + \lambda \tilde z )^2 + H(\tilde x + \lambda \tilde z - x - \lambda z)^2\right)
            \\
             &
            \le
            \inf_{\tilde x, \tilde z \in X}\left([G(\tilde x) + \lambda G(\tilde z)]^2 + [H(\tilde x -x) + \lambda H(\tilde z - z)]^2\right).
        \end{split}
    \]
    Hence, by the triangle inequality in $\R^2$ ($\norm{a+b}_2 \le \norm{a}_2 + \norm{b}_2$),
    \[
        \begin{split}
            \sqrt{[G^2 \infconv H^2](x + z)}
             &
            \le
            \inf_{\tilde x, \tilde z \in X}\left(
            \sqrt{G(\tilde x)^2 + H(\tilde x -x)^2} + \lambda \sqrt{G(\tilde z)^2 +  H(\tilde z - z)^2}\right)
            \\
             &
            = \sqrt{[G^2 \infconv H^2](x)} + \lambda \sqrt{[G^2 \infconv H^2](z)}.
        \end{split}
    \]
    This establishes both the triangle inequality and positive homogeneity.
\end{proof}

\subsection{Strong local subdifferentiability}
\label{sec:infconv:subreg}

Recall the concepts \cref{eq:subreg:local:strong,eq:subreg:local:semistrong} of strong and semi-strong local subdifferentiability.

\begin{lemma}
    \label{lemma:infconv:subdiff-infconv}
    Let $F=G \infconv H$ for convex, proper, and lower semicontinuous $G,H:X \to \extR$ such that the full assumptions of \cref{lemma:infconv:inverse-formula} hold.
    Suppose
    \[
        S(x) \defeq \{ \tilde x \in X \mid \subdiff G(\tilde x) \isect \subdiff H(x-\tilde x) \ne \emptyset \}
    \]
    satisfies for a given $\tilde x \in S(\optx)$ and some $\epsilon,\rho>0$ the continuity property
    \begin{equation}
        \label{eq:infconv:metric:s-continuity}
        \dist(\tilde x, S(x)) \le \rho
        \quad\text{for all}\quad
        x \in \B(\optx, \epsilon).
    \end{equation}
    Let $\optx^* \in \subdiff F(\optx)$, and assume further that, for semi-norms $\norm{\freevar}_\diamond$ and $\norm{\freevar}_\triangle$ on $X$,
    \begin{enumerate}[label=(\roman*)]
        \item $G$ is $\norm{\freevar}_\diamond$-(semi)strongly locally subdifferentiable with the factor $\gamma>0$ at $\tilde x$ for $\optx^*$ in $B(\optx, \rho)$, and
        \item $H$ is $\norm{\freevar}_\triangle$-(semi)strongly locally subdifferentiable with the factor $\gamma$ at $\optx-\tilde x$ for $\optx^*$ in $B(\optx-\tilde x, \rho)$.
    \end{enumerate}
    Recalling \cref{lemma:infconv:seminorm}, let $\norm{\freevar}_\square \defeq \sqrt{\norm{\freevar}_\diamond^2 \infconv \norm{\freevar}_\triangle^2}$.
    Then $\optx^* \in \subdiff F(\optx)$ and $F$ is $\norm{\freevar}_\square$-(semi)strongly locally subdifferentiable at $\optx$ for $\optx^*$ with the factor $\gamma$ in $\B(\optx, \min\{\epsilon, 2\rho\})$.
\end{lemma}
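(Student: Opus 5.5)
The plan is to compare $F$ at a nearby point $x$ with the value at $\optx$ through a single optimal splitting, and to sum the two growth inequalities for $G$ and $H$. Throughout I read the statement so that $\optx^* \in \subdiff G(\tilde x) \isect \subdiff H(\optx-\tilde x)$ at the given $\tilde x \in S(\optx)$; this is the natural meaning of the hypotheses (i)–(ii), although the statement only assumes $\optx^* \in \subdiff F(\optx)$ and that the intersection is nonempty.

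\textbf{Step 1: exact splittings.} By the inclusion part of \cref{lemma:subreg:infconv-subdiff}, $\optx^* \in \subdiff F(\optx)$. I will show directly that any $z \in S(x)$ is an exact minimiser in the definition of $[G \infconv H](x)$. Take $x^* \in \subdiff G(z) \isect \subdiff H(x-z)$. For any $z'$, the two subgradient inequalities give
\[
    G(z') + H(x-z') \ge G(z) + H(x-z) + \dualprod{x^*}{z'-z} + \dualprod{x^*}{z-z'},
\]
and the two linear terms cancel. Hence $F(x) = G(z)+H(x-z)$ whenever $z \in S(x)$. In particular $F(\optx) = G(\tilde x) + H(\optx-\tilde x)$.

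\textbf{Step 2: localisation.} Fix $x \in \B(\optx, \min\{\epsilon, 2\rho\})$. By \eqref{eq:infconv:metric:s-continuity}, I choose $z \in S(x)$ with $\norm{z-\tilde x} \le \rho$; if the distance is not attained, I take $z$ with $\norm{z-\tilde x} \le \rho+\eta$ and let $\eta \downto 0$ at the end. Then $z$ lies in the neighbourhood where (i) applies. I also need $x - z$ to lie in $\B(\optx-\tilde x, \rho)$, where (ii) applies. The triangle inequality only gives
\[
    \norm{(x-z)-(\optx-\tilde x)} \le \norm{x-\optx} + \norm{z-\tilde x},
\]
so this is where the radius $\min\{\epsilon,2\rho\}$ must be matched against the neighbourhoods in (i)–(ii).

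I expect this neighbourhood bookkeeping to be the main obstacle, since the radii as stated do not obviously fit together. My first attempt would be to reinterpret the neighbourhoods in (i)–(ii) as the appropriate balls around $\tilde x$ and $\optx-\tilde x$ with a radius large enough to absorb both errors. Failing that, I would shrink $\rho$ in \eqref{eq:infconv:metric:s-continuity} so that both points fall inside their respective balls.

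\textbf{Step 3: summing the growth inequalities.} For the strong case, (i) and (ii) give
\[
    G(z) \ge G(\tilde x) + \dualprod{\optx^*}{z-\tilde x} + \gamma\norm{z-\tilde x}_\diamond^2
\]
and
\[
    H(x-z) \ge H(\optx-\tilde x) + \dualprod{\optx^*}{(x-z)-(\optx-\tilde x)} + \gamma\norm{(x-z)-(\optx-\tilde x)}_\triangle^2 .
\]
Adding these and using Step 1, the linear terms combine to $\dualprod{\optx^*}{x-\optx}$. The quadratic terms have the form $\gamma(\norm{a}_\diamond^2+\norm{b}_\triangle^2)$ with $a + b = x-\optx$. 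Their sum is therefore at least $\gamma[\norm{\freevar}_\diamond^2 \infconv \norm{\freevar}_\triangle^2](x-\optx) = \gamma\norm{x-\optx}_\square^2$, and $\norm{\freevar}_\square$ is a semi-norm by \cref{lemma:infconv:seminorm}.

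\textbf{Semi-strong case.} Here $\norm{z-\tilde x}_\diamond$ is replaced by $\dist_\diamond(z, \inv{[\subdiff G]}(\optx^*))$, and similarly for $H$ with $\dist_\triangle(x-z, \inv{[\subdiff H]}(\optx^*))$. Take near-optimal points $g \in \inv{[\subdiff G]}(\optx^*)$ and $h \in \inv{[\subdiff H]}(\optx^*)$ for these distances. Then $\optx^* \in \subdiff G(g) \isect \subdiff H(h)$, so $g \in S(g+h)$, and by the inclusion in \cref{lemma:subreg:infconv-subdiff}, $g + h \in \inv{[\subdiff F]}(\optx^*)$. The decomposition $x - (g+h) = (z-g) + (x-z-h)$ then gives
\[
    \dist_\diamond^2(z, \inv{[\subdiff G]}(\optx^*)) + \dist_\triangle^2(x-z, \inv{[\subdiff H]}(\optx^*)) \ge \dist_\square^2(x, \inv{[\subdiff F]}(\optx^*))
\]
up to an arbitrarily small error, which yields the claim in the semi-strong case.
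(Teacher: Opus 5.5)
Your Steps 1 and 3 and your semi-strong argument match the paper's proof. The paper also takes an exact splitting at a point of $S(x)$ within $\rho$ of $\tilde x$; it cites \cref{lemma:subreg:infconv-subdiff} at the point where you verify directly that the two subgradient inequalities cancel. It then adds the growth inequalities of $G$ and $H$ and bounds the quadratic terms below by $\gamma\norm{x-\optx}_\square^2$. In the semi-strong case it reparametrises over pairs with $\optx^*\in\subdiff G(g)\isect\subdiff H(h)$ and uses $g+h\in\inv{[\subdiff F]}(\optx^*)$, exactly as you do. Your reading $\optx^*\in\subdiff G(\tilde x)\isect\subdiff H(\optx-\tilde x)$ is not an extra assumption. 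It follows from (i)--(ii) once the ball in (i) is read as $\B(\tilde x,\rho)$, because a local subgradient inequality for a convex function holds globally.

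The obstacle you isolate in Step 2 is a genuine gap, and better bookkeeping will not remove it. The paper's proof applies (ii) at $x-x'$ knowing only that $x'\in\B(\tilde x,\rho)$, so it has the same hole, and with the radius $\min\{\epsilon,2\rho\}$ the statement is false. Take the following data:
\begin{itemize}
\item $X=\R$, $\gamma=\rho=1$, $\epsilon=2$, $\optx=\tilde x=0$, $\optx^*=0$;
\item $G(t)=2t^2$, and the Lipschitz Huber function $H(t)=t^2$ for $|t|\le 1$, $H(t)=2|t|-1$ otherwise;
\item $\norm{t}_\diamond=\sqrt{2}|t|$ and $\norm{t}_\triangle=|t|$.
\end{itemize}
Then (i) holds on all of $\R$ and (ii) holds on $\B(0,1)$. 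We have $S(x)=\{x/3\}$ for $|x|\le 3/2$ and $S(x)=\{1/2\}$ for $x>3/2$ (symmetrically for $x<-3/2$), so $\dist(0,S(x))\le 1/2\le\rho$ for every $x$. Also $\norm{t}_\square^2=\frac{2}{3}t^2$. However,
\[
    F(x)=\frac{2}{3}x^2 \quad\text{for } |x|\le\frac{3}{2},
    \qquad
    F(x)=2|x|-\frac{3}{2} \quad\text{for } |x|>\frac{3}{2}.
\]
The second branch is the tangent line to $\frac{2}{3}x^2$ at $|x|=\frac{3}{2}$. Hence $F(x)<\gamma\norm{x}_\square^2$ for $\frac{3}{2}<|x|<2=\min\{\epsilon,2\rho\}$. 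The semi-strong version fails too, since all the inverse subdifferentials at $0$ equal $\{0\}$.

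This is exactly your failure mode: for such $x$, $z=\frac{1}{2}\in\B(\tilde x,\rho)$, but $x-z\in(1,\frac{3}{2})$ lies where $H$ grows only linearly. Your fallback of shrinking $\rho$ in \eqref{eq:infconv:metric:s-continuity} is not available, because that property is assumed for a single pair $(\epsilon,\rho)$ with no modulus. The hypotheses themselves must change. For instance, suppose \eqref{eq:infconv:metric:s-continuity} holds with some $\rho'<\rho$ in place of $\rho$. Then for $\norm{x-\optx}<\min\{\epsilon,\rho-\rho'\}$ you can choose $z\in S(x)$ with $\norm{z-\tilde x}+\norm{x-\optx}<\rho$. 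Both $z$ and $x-z$ then lie in the balls of (i) and (ii), and your Steps 1 and 3 prove the claim verbatim on $\B(\optx,\min\{\epsilon,\rho-\rho'\})$.
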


\begin{proof}
    We first treat strong local subdifferentiability.
    Let $x \in \B(\optx, \min\{\epsilon, 2\rho\})$.
    We have $[G \infconv H](\optx)=G(\tilde x) + H(\optx-\tilde x)$ for $\tilde x \in S(\optx)$.
    Also using the definition of the infimal convolution, it follows that
    \[
        \begin{split}
            F(x) - F(\optx) - \dualprod{\optx^*}{x-\optx}_{X^*,X}
             &
            =
            \inf_{x' \in X}\bigl(
            G(x') + H(x-x') - G(\tilde x) - H(\optx-\tilde x)
            \\
            \MoveEqLeft[-3]
            - \dualprod{\optx^*}{(x'-\tilde x)+(x-x'-(\optx-\tilde x))}_{X^*,X}
            \bigr).
        \end{split}
    \]
    Due to \eqref{eq:infconv:metric:s-continuity} and $x \in \B(\optx, \epsilon)$, there exists a $x' \in \B(\tilde x, \rho) \isect S(x)$, so we can use the assumed strong local metric subdifferentiabilities to obtain, as required
    \[
        \begin{split}
            F(x) - F(\optx) - \dualprod{\optx^*}{x-\optx}_{X^*,X}
             &
            \ge
            \inf_{x' \in \B(\tilde x, \rho)}\left(
            \gamma\norm{x'-\tilde x}_\diamond^2
            + \gamma\norm{(x-x')-(\optx-\tilde x)}_\triangle^2
            \right)
            \\
             &
            \ge
            \inf_{x' \in X}\left(
            \gamma\norm{x'-\tilde x}_\diamond^2
            + \gamma\norm{(x-\optx)-(x'-\tilde x)}_\triangle^2
            \right)
            =
            \gamma\norm{x-\optx}_\square^2.
        \end{split}
    \]

    For semi-strong local subdifferentiability, we similarly have
    \[
        \begin{split}
            F(x) - F(\optx) & - \dualprod{\optx^*}{x-\optx}
            \ge
            \inf_{x' \in X}\left(
            \gamma\dist^2(x', \inv{[\subdiff G]}(\optx^*))
            + \gamma\dist^2(x-x', \inv{[\subdiff H]}(\optx^*))
            \right)
            \\
                            &
            =
            \inf\Bigl\{
            \inf_{x' \in X}\left(
            \gamma\norm{x'-\tilde x}_\diamond^2
            + \gamma\norm{x-x' - \bar x}_\triangle^2
            \right)
            \Bigm|
            \optx^* \in \subdiff G(\tilde x) \isect \subdiff H(\bar x);\,
            \tilde x, \bar x \in X
            \Bigr\}
            \\
                            &
            =
            \inf\Bigl\{
            \inf_{x' \in X}\left(
            \gamma\norm{x'-\tilde x}_\diamond^2
            + \gamma\norm{x-\bar x - (x' - \tilde x)}_\triangle^2
            \right)
            \Bigm|
            \optx^* \in \subdiff G(\tilde x) \isect \subdiff H(\bar x - \tilde x);\,
            \tilde x, \bar x \in X
            \Bigr\}
            \\
                            &
            =
            \inf\{
            \gamma\norm{x- \bar x}_\square^2
            \mid
            \optx^* \in \subdiff G(\tilde x) \isect \subdiff H(\bar x - \tilde x);\,
            \tilde x, \bar x \in X
            \}.
        \end{split}
    \]
    On the other hand, \cref{lemma:subreg:infconv-subdiff} establishes that
    $\subdiff G(\tilde x) \isect \subdiff H(\bar x - \tilde x) \subset \subdiff F(\bar x)$.
    It follows that
    \[
        F(x) - F(\optx) - \dualprod{\optx^*}{x-\optx}
        \ge
        \inf\{
        \gamma\norm{x- \bar x}_\square^2
        \mid
        \optx^* \in \subdiff F(\bar x);\,
        \bar x \in X
        \}
        =
        \gamma\dist_\circ^2(x, \inv{[\subdiff F]}(\optx^*)),
    \]
    where $\dist_\circ$ is the set-distance generated by $\norm{\freevar}_\circ$.
\end{proof}

\begin{remark}
    The continuity property \eqref{eq:infconv:metric:s-continuity} holds if $S$ has the Aubin property (see \cite[Chapter 27]{clasonvalkonen2020nonsmooth}) at $\optx$ for $\tilde x$ in $\B(\optx, \epsilon)$ with modulus $\kappa>0$.
    Indeed, then
    \[
        \dist(\tilde x, S(x)) \le \kappa \dist(\inv S(\tilde x), x) \le \kappa \norm{\optx -x} \le \kappa\epsilon=\rho
        \quad\text{for all}\quad
        x \in \B(\optx, \epsilon).
    \]
    By \cite[Theorem 28.3]{clasonvalkonen2020nonsmooth}, if $x^* \in \subdiff G(\tilde x) \isect \subdiff H(\optx-\tilde x)$, and
    \[
        0 \in D^*[\subdiff G](\tilde x|x^*)(x^{**}) + p
        \text{ and }
        p \in D^*[\subdiff H](\optx-\tilde x|x^*)(x^{**})
        \implies
        p=0, x^{**}=0
    \]
    then $S$ has the Aubin property at $x$ for $x^*$.
\end{remark}

\begin{example}[Growth properties of spatiotemporal total variation]
    \label{ex:infconv:spatiotemporal-tv}
    Let $\Rto{N}$ be formed from a spatial total variation regulariser as in \cref{ex:subreg:r-improvement,ex:reg:r}.
    Suppose
    \[
        G=T_{:N},
        \quad
        T_{:N}(\zto{N})=\sum_{k=0}^N \norm{\ztoind{N}{k}}_\bullet
        \quad\text{and}\quad
        H=\delta_{\closure \conv \DpredictConstrTo{N}}^*=(\delta_{B^{N+1}})_*(\zto{N})
    \]
    satisfy the assumptions of \cref{lemma:infconv:subdiff-infconv}.
    Then $T_{:N} \infconv \delta_{\closure \conv \DpredictConstrTo{N}}^*$ is $\norm{\freevar}_\square$-(semi-)strongly locally subdifferentiable; consequently $\Rto{N}(\xto{N}) =  [\Tto{N} \infconv \delta_{\closure \conv \DpredictConstrTo{N}}^*](\Kto{N} \xto{N})$ is  $\norm{\freevar}_{N,\circ}$-(semi-)strongly locally subdifferentiable for  $\norm{\xto{N}}_{N,\circ} \defeq \norm{\Kto{N}\xto{N}}_{\square}$.

    The semi-norm $\norm{\freevar}_\diamond$ may be zero on a very large subspace: it will generally only be non-zero when $z_k^N(\xi)=0$ for a spatial point $\xi$, and the corresponding subdifferential is in the interior of the dual ball; compare the analysis of both the Lasso and spatial total variation in \cite[Section 4]{valkonen2021regularisation}.
    Likewise, a small $\DpredictConstrTo{N}$ may produce $\norm{\freevar}_\square$ that provides growth only on a very small subspace.
    We, therefore, have to compensante for this weak growth estimate with the data term.
\end{example}

Suppose now that the basic source condition of \cref{ass:reg:basic-sc} holds for $\optxto{N}$.
To verify the strong or semi-strong source conditions \cref{ass:reg:semistrong-sc,ass:reg:strong-sc}, we need to verify the respective (semi-)strong local subdifferentiability of $\Qto{N}{\delta} = \Jto{N}{\delta} + \alpha_\delta \Rto{N}$ at $\optxto{N}$ for $\optxtostar{N} \defeq \Jto{N}{\delta}'(\optxto{N}) - \alpha_\delta\Ato{N}'(\optxto{N})\optwto{N}$.
Recall that
\[
    \Jto{N}{\delta}(x) = \ellto{N}(\Ato{N}(x)-\bto{N}{\delta}).
\]

\begin{lemma}
    \label{lemma:infconv:j}
    Suppose \cref{ass:reg:general} holds, and that
    \begin{enumerate}[nosep,label=(\roman*)]
        \item\label{item:infconv:j:a}
              $\norm{[\Ato{N}'(\zeta)-\Ato{N}'(\hat x)]h} \le \beta\norm{\Ato{N}'(\hat x)h}$ for all $h$ and for all $\zeta \in U$ for some $\beta>0$ and a neighbourhood $U$ of $\optxto{N}$.
        \item $\ellto{N}$ is $\gamma$-strongly convex.
    \end{enumerate}
    Then, for any $\eta>0$ and $\xto{N} \in U$,
    \begin{equation}
        \label{eq:infconv:j}
        \Jto{N}{\delta}(\xto{N})-\Jto{N}{\delta}(\optxto{N})
        \ge
        \dualprod{\Jto{N}{\delta}'(\optxto{N})}{\xto{N}-\optxto{N}}
        -
        \delta(N+1)\inv\eta
        +
        \frac{\gamma-\eta\beta^2}{2}\norm{\Ato{N}'(\optxto{N})(\xto{N}-\optxto{N})}^2.
    \end{equation}
\end{lemma}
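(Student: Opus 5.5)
Write $\hat x\defeq\optxto{N}$, $x\defeq\xto{N}$, $\ell\defeq\ellto{N}$, $A\defeq\Ato{N}$, $b_\delta\defeq\bto{N}{\delta}$, $\hat b\defeq\optbto{N}=A(\hat x)$, and $h\defeq x-\hat x$. The plan is to expand $\Jto{N}{\delta}(x)-\Jto{N}{\delta}(\hat x)$ with the strong convexity of $\ell$, linearise $A$ via the mean value theorem, and absorb the nonlinearity error with the noise bound and the Lipschitz-type condition \cref{item:infconv:j:a}.

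\textbf{Step 1 (strong convexity of $\ell$).} Evaluating at the points $A(x)-b_\delta$ and $A(\hat x)-b_\delta=\hat b-b_\delta$ gives
\[
    \Jto{N}{\delta}(x)-\Jto{N}{\delta}(\hat x)\ge \dualprod{\ell'(\hat b-b_\delta)}{A(x)-A(\hat x)}+\frac{\gamma}{2}\norm{A(x)-A(\hat x)}^2.
\]
Since $\Jto{N}{\delta}'(\hat x)=A'(\hat x)^*\ell'(\hat b-b_\delta)$, it suffices to control the linearisation remainder
\[
    r\defeq A(x)-A(\hat x)-A'(\hat x)h
\]
inside both terms.

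\textbf{Step 2 (mean value form).} Assume $U$ is convex, or shrink it to a ball. Then $r=\int_0^1[A'(\hat x+th)-A'(\hat x)]h\,dt$, and \cref{item:infconv:j:a} gives
\[
    \norm{r}\le\beta\norm{A'(\hat x)h}.
\]
The first term splits as
\[
    \dualprod{\ell'(\hat b-b_\delta)}{A(x)-A(\hat x)}=\dualprod{\Jto{N}{\delta}'(\hat x)}{h}+\dualprod{\ell'(\hat b-b_\delta)}{r}.
\]
Young's inequality with parameter $\eta$ bounds the last piece by
\[
    -\frac{1}{2\eta}\norm{\ell'(\hat b-b_\delta)}^2-\frac{\eta}{2}\norm{r}^2\ge-\frac{\delta(N+1)}{2\eta}-\frac{\eta\beta^2}{2}\norm{A'(\hat x)h}^2,
\]
using \cref{ass:reg:general}\,\cref{item:reg:general:noise}. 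Note that $\ell'(\hat b-b_\delta)$ has the same norm as $\ell'(b_\delta-\hat b)$ only when $\ell$ is even, e.g.\ quadratic. Otherwise I would need \eqref{eq:reg:general:noise} to be read for the appropriate sign, and I flag this as a point to check.

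\textbf{Step 3 (the quadratic term).} Bounding $\frac{\gamma}{2}\norm{A(x)-A(\hat x)}^2$ from below by $\frac{\gamma}{2}\norm{A'(\hat x)h}^2$ is the main obstacle, because $A(x)-A(\hat x)=A'(\hat x)h+r$ could partially cancel. A reverse triangle estimate only yields $(1-\beta)^2$ when $\beta<1$. The target constant $\frac{\gamma-\eta\beta^2}{2}$ instead suggests keeping the full expansion $\frac{\gamma}{2}\norm{A'(\hat x)h+r}^2$ and using $\norm{a+r}^2\ge\norm a^2+2\iprod{a}{r}$. A cleaner alternative is to apply the strong convexity inequality to the auxiliary point $\hat b-b_\delta+A'(\hat x)h$ and treat the difference $\ell(A(x)-b_\delta)-\ell(\hat b-b_\delta+A'(\hat x)h)$ as the error, bounded by $\ell'$ at $A(x)-b_\delta$ paired with $r$.

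I would try the first route, then collect the cross terms with the Young step of Step 2 and choose the split so that only $-\eta\beta^2$ and $-\delta(N+1)\inv\eta$ remain. The stated constant $\inv\eta$ in place of $(2\eta)^{-1}$ leaves slack, and I would use it to absorb the extra cross term. If exact constants fail, I would accept the bound with $\gamma$ replaced by $\gamma(1-\beta)^2$, assuming $\beta<1$, which suffices for \cref{ass:reg:strong-sc,ass:reg:semistrong-sc}.
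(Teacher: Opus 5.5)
Your Steps 1--2 match the paper's proof. The paper also applies the $\gamma$-strong convexity of $\ellto{N}$ at $\Ato{N}(\optxto{N})-\bto{N}{\delta}$ and linearises $\Ato{N}$. It then splits off $\dualprod{\Jto{N}{\delta}'(\optxto{N})}{\xto{N}-\optxto{N}}$ and bounds the remainder pairing by Young's inequality, condition (i) and the noise bound. The paper uses a single mean-value point $\zeta$, which does not exist in general for vector-valued maps, so your integral remainder is the correct version. Your Step 2 constant $\delta(N+1)/(2\eta)$ is also already better than the stated one.

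The genuine gap is Step 3, and it cannot be closed. The paper gets past it only by replacing $\frac{\gamma}{2}\norm{\Ato{N}'(\zeta)h}^2$ with $\frac{\gamma}{2}\norm{\Ato{N}'(\optxto{N})h}^2$ inside an equality, which is unjustified. Your proposed repairs fail for a structural reason. Write $a\defeq\Ato{N}'(\optxto{N})h$. The cross term $\gamma\iprod{a}{r}$ in $\frac{\gamma}{2}\norm{a+r}^2$ does not involve the noise at all, so no slack in the $\delta(N+1)\inv\eta$ term can absorb it. The bound $\norm{a+r}^2\ge\norm{a}^2+2\iprod{a}{r}$ only gives the factor $1-2\beta$. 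The auxiliary-point variant has the same defect: $\ellto{N}'$ at the auxiliary point differs from $\ellto{N}'(\optbto{N}-\bto{N}{\delta})$ by a noise-independent amount of order $\norm{a}$, which is then paired with $r$.

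In fact the stated inequality is false. Take $N=0$, $X=Y=\R$, $\ellto{0}(y)=\frac{\gamma}{2}y^2$, $\Ato{0}(x)=x-x^2/2$, $\optxto{0}=0$, and $U=(-\beta,\beta)$ with $\beta\in(0,1)$. Use exact data $\bto{0}{\delta}=\optbto{0}=0$, so the noise bounds hold for every $\delta>0$; with, e.g., $\Rto{0}(x)=x^2$ and $\xsolto{0}{\delta}=0$, all of \cref{ass:reg:general} holds. Condition (i) holds since $|\Ato{0}'(\zeta)-\Ato{0}'(0)|=|\zeta|\le\beta$, and $\Jto{0}{\delta}'(0)=0$, so the claim becomes
\[
    \frac{\gamma}{2}x^2\Bigl(1-\frac{x}{2}\Bigr)^2\ge\frac{\gamma-\eta\beta^2}{2}x^2-\frac{\delta}{\eta}
    \quad\text{for all}\quad x\in U,\ \eta>0,\ \delta\in(0,\delta^*).
\]
Letting $\delta\downto0$ and then $\eta\downto0$ gives $(1-x/2)^2\ge1$, which is false for $x\in(0,\beta)$.

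Hence some loss in the quadratic factor is unavoidable, and you should commit to your fallback, which is the correct statement. Steps 1--2 together with $\norm{a+r}\ge(1-\beta)\norm{a}$ prove, for $\beta<1$,
\[
    \Jto{N}{\delta}(\xto{N})-\Jto{N}{\delta}(\optxto{N})
    \ge
    \dualprod{\Jto{N}{\delta}'(\optxto{N})}{\xto{N}-\optxto{N}}
    -\frac{\delta(N+1)}{2\eta}
    +\frac{\gamma(1-\beta)^2-\eta\beta^2}{2}\norm{\Ato{N}'(\optxto{N})(\xto{N}-\optxto{N})}^2.
\]
The paper's subsequent choice $0<\eta<\gamma/\beta^2$ must then become $0<\eta<\gamma(1-\beta)^2/\beta^2$. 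Your two caveats are also legitimate, and the paper uses both tacitly: the need for $[\optxto{N},\xto{N}]\subset U$, and the sign inside $\ellto{N}'(\optbto{N}-\bto{N}{\delta})$ versus $\ellto{N}'(\bto{N}{\delta}-\optbto{N})$.
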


\begin{proof}
    Using the $\gamma$-strong convexity of $\ellto{N}$ and the mean value theorem, for some $\zeta \in [\optxto{N},\xto{N}] \subset U$, we have
    \[
        \begin{split}
            \Jto{N}{\delta}(\xto{N})-\Jto{N}{\delta}(\optxto{N})
             &
            =
            \ellto{N}(\Ato{N}(\xto{N})-\bto{N}{\delta})
            -
            \ellto{N}(\Ato{N}(\optxto{N})-\bto{N}{\delta})
            \\
             &
            =
            \ellto{N}(\Ato{N}(\optxto{N})-\bto{N}{\delta}+\Ato{N}'(\zeta)(\xto{N}-\optxto{N}))
            -
            \ellto{N}(\Ato{N}(\optxto{N})-\bto{N}{\delta})
            \\
             &
            \ge
            \iprod{\grad\ellto{N}(\Ato{N}(\optxto{N})-\bto{N}{\delta})}{\Ato{N}'(\zeta)(\xto{N}-\optxto{N})}
            +
            \frac{\gamma}{2}\norm{\Ato{N}'(\zeta)(\xto{N}-\optxto{N})}^2
            \\
             &
            =
            \dualprod{\Jto{N}{\delta}'(\optxto{N})}{\xto{N}-\optxto{N}}
            +
            \iprod{\grad\ellto{N}(\optbto{N}-\bto{N}{\delta})}{[\Ato{N}'(\zeta)-\Ato{N}'(\optxto{N})](x-\optxto{N})}
            \\
            \MoveEqLeft[-1]
            +
            \frac{\gamma}{2}\norm{\Ato{N}'(\optxto{N})(\xto{N}-\optxto{N})}^2.
        \end{split}
    \]
    The claim now follows using Young's inequality, \cref{item:infconv:j:a}, and \cref{ass:reg:general}\,\cref{item:reg:general:noise}.
\end{proof}

Take $0<\eta<\gamma/\beta^2$.
Suppose $\Rto{N}$ is $\norm{\freevar}_{N,\circ}$-(semi-)strongly locally subdifferentiable with the factor $\gamma^R>0$ (independent of $N$) at $\optx$ for $- \alpha_\delta\Ato{N}'(\optxto{N})\optwto{N}$, that is
\begin{equation}
    \label{eq:infconv:r-semi}
    \Rto{N}(\xto{N})
    - \Rto{N}(\optxto{N})
    \ge
    - \dualprod{\alpha_\delta\Ato{N}'(\optxto{N})\optwto{N}}{\xto{N}-\optxto{N}}
    + \gamma^R\norm{\xto{N}-\optxto{N}}_{N,\circ}^2.
\end{equation}
or
\begin{equation}
    \label{eq:infconv:r-strong}
    \Rto{N}(\xto{N})
    - \Rto{N}(\optxto{N})
    \ge
    - \dualprod{\alpha_\delta\Ato{N}'(\optxto{N})\optwto{N}}{\xto{N}-\optxto{N}}
    + \gamma^R\dist_{N,\circ}^2(\xto{N}, \optXto{N})
\end{equation}
These estimates can in in applicable cases be verified with \cref{lemma:infconv:subdiff-infconv}; compare \cref{ex:infconv:spatiotemporal-tv}.
We can then by summing \eqref{eq:infconv:r-semi} (resp.~\eqref{eq:infconv:r-strong}) with \eqref{eq:infconv:j} obtain \cref{ass:reg:strong-sc} (resp.~\cref{ass:reg:semistrong-sc}) with
\[
    \epsilonerror{N}{\delta} =\delta(N+1)\inv\eta
    \quad\text{and, independent of $N$,}\quad
    \subdifffactor{N}{\delta} = \gamma \defeq \min\{\gamma-\eta\beta^2, \gamma^R\}/2.
\]
That is, for $\vto{N} \defeq \Jto{N}{\delta}'(\optxto{N}) - \alpha_\delta\Ato{N}'(\optxto{N})\optwto{N}$, we have in the strong case,
\begin{equation}
    \label{eq:infconv:q-semi}
    \begin{split}
        \Qto{N}{\delta}(\xto{N})
         &
        - \Qto{N}{\delta}(\optxto{N})
        \\
         &
        \ge
        \dualprod{\vto{N}}{\xto{N}-\optxto{N}}
        + \frac{\gamma-\eta\beta^2}{2}\norm{\Ato{N}'(\optxto{N})(\xto{N}-\optxto{N})}^2
        + \frac{\gamma^R}{2}\norm{\xto{N}-\optxto{N}}_{N,\circ}^2
        - \epsilonerror{N}{\delta}
        \\
         &
        \ge
        \dualprod{\vto{N}}{\xto{N}-\optxto{N}}
        + \subdifffactor{N}{\delta}\left(
        \norm{\Ato{N}'(\optxto{N})(\xto{N}-\optxto{N})}^2
        + \norm{\xto{N}-\optxto{N}}_{N,\circ}^2
        \right)
        - \epsilonerror{N}{\delta}
    \end{split}
\end{equation}
and, likewise, in the semi-strong case,
\begin{equation}
    \label{eq:infconv:q-strong}
    \begin{split}
        \Qto{N}{\delta}(\xto{N})
         &
        - \Qto{N}{\delta}(\optxto{N})
        \\
         &
        \ge
        \dualprod{\vto{N}}{\xto{N}-\optxto{N}}
        + \subdifffactor{N}{\delta}\norm{\Ato{N}'(\optxto{N})(\xto{N}-\optxto{N})}^2
        + \subdifffactor{N}{\delta}\dist_{N,\circ}^2(\xto{N}, \optXto{N})
        - \epsilonerror{N}{\delta}.
    \end{split}
\end{equation}
Recalling the discussion in \cref{ex:infconv:spatiotemporal-tv}, in the case of the strong source condition of \cref{ass:reg:strong-sc}, a part of $\norm{\Ato{N}'(\optxto{N})(\xto{N}-\optxto{N})}^2$ in \eqref{eq:infconv:q-semi} can be fused with $\norm{\xto{N}-\optxto{N}}_{N,\circ}^2$ to improve the latter semi-norm.
Indeed, compare the treatment of spatial total variation in \cite[Section 4.3]{valkonen2021regularisation}.
To do the same with \cref{eq:infconv:q-strong} and the semi-strong source condition of \cref{ass:reg:semistrong-sc} requires further assumptions.
In particular, if $\Ato{N}$ is linear then the value of
\[
    \Qto{N}{\delta}(\xto{N})
    - \Qto{N}{\delta}(\optxto{N})
    - \dualprod{\vto{N}}{\xto{N}-\optxto{N}}
\]
is independent of the choice of $\optxto{N} \in \optXto{N}$.
Therefore, in this case, we can take the supremum over $\optxto{N} \in \optXto{N}$ in \eqref{eq:infconv:q-strong}, and use $\sup A + \inf B \ge \inf(A+B)$ to improve the $\dist_{N,\circ}$.

By \cref{ex:reg:strong-misc:containment,ex:reg:semistrong-misc:containment}, the containment and covering properties of \cref{ass:reg:strong-misc}\,\cref{item:strong-misc:nbd} and \cref{ass:reg:semistrong-misc}\,\cref{item:reg:semistrong-misc:containment} can be studied using the very same techniques that we just applied to $\Qto{N}{\delta}$.

Recalling \eqref{eq:reg:lim-quantities}, we now get
\[
    \mathscr{E}_{1,\delta}
    = \lim_{N\to \infty}
    \frac{\errto{N}{\delta}}{(N+1)\gamma}
    +
    \frac{\delta}{\eta\gamma}.
\]
The condition $\lim_{\delta \downto 0} \mathscr{E}_{1,\delta}=0$ of \cref{thm:reg:semistrong-limit,cor:reg:strong-limit} can therefore be ensured by controlling the algorithm errors $\errto{N}{\delta}$.
The condition $\lim_{\delta \downto 0} \mathscr{E}_{2,\delta}=0$ automatically holds from the definition \eqref{eq:reg:lim-quantities}, while $\lim_{\delta \downto 0} \mathscr{E}_{3,\delta}=0$ requires $\norm{\optwto{N}}_{\Yto{N}}^2/(N+1)$ to be bounded and $\alpha_\delta \downto 0$.
To ensure the limiting properties of $\boundconst{N}{\delta}$ defined in \eqref{eq:reg:boundconst}, and as required by \cref{ass:reg:strong-misc}\,\cref{item:strong-misc:boundconst-limit}, we further require the relative rate control
\[
    \lim_{\delta \downto 0} \sup_{N \in \N}\left(
    \frac{\delta^q +\delta}{\alpha_{\delta}}
    + \frac{\errto{N}{\delta} }{\alpha_{\delta}(N+1)}
    \right)
    =0.
\]
For  \cref{ass:reg:semistrong-misc}\,\cref{item:reg:semistrong-misc:boundconst-limit}, the supremum becomes a limit.

\section{Numerical illustration}
\label{sec:numerical}

In this section, we numerically verify the regularisation theory of \cref{sec:dynamic-regtheory}  in the context of the dynamic EIT problem from the introduction.
In the latter, the goal is to reconstruct a time-varying conductivity distribution containing moving (and occasionally disappearing) inclusions. The evolution of the inclusions is governed by a transport equation corresponding to incompressible flow with constant density.

The numerical experiments, including the EIT forward model, data generation, discretisation choices, and noise model, closely follow the setup introduced in \cite{tuomov2024online-eit}.
The algorithmic setup follows our companion paper \cite{online-tracking}.
We therefore only summarise the key ingredients here and refer the reader to those works for the full details.
Our Julia implementation of the experiments and algorithms is available on Zenodo \cite{jauhiainen2025online-eit-codes}.

\subsection{Mathematical model of EIT}
\label{sec:numerical:eit}

On time step $k$, the forward problem of EIT is governed by the Complete Electrode Model (CEM).
The objective is to reconstruct an electric conductivity $\sigma^k \in L^\infty(\Omega)$ inside the measurement domain $\Omega \subset \R^d$ from noisy measurements $\EITmeas^{j,k}_\delta$ of electrode currents $I^{j,k} = I(\sigma^k, U^{j,k}) \in \R^{\Nelec}$ at $\Nelec \in \N$ electrodes over $j=1,\ldots,\Npat$ different potential patterns $U^{j,k} =(U^{j,k}_1,\ldots,U^{j,k}_{\Nelec}) \in \R^{\Nelec}$ at the same electrodes.
We enforce enforce the uniform bounds $0 < \sigma_m \le \sigma^k \le \sigma_M < \infty$ a.e.~in the imaging domain $\Omega$.
Then the physics are governed by the PDE
\begin{subequations}
    \label{eq:eit:cem}
    \begin{alignat}{4}
        \nabla\cdot(\sigma^k\nabla u^{j,k})                           & = 0          &  & \quad \text{in }\Omega,
        \\
        u^{j,k} + \zeta_i \sigma^k\nabla u^{j,k}\cdot\nu              & = U_i^{j,k}  &  & \quad \text{on }\partial\Omega_{e_i},
        \\
        \sigma^k\nabla u^{j,k}\cdot\nu                                & = 0          &  & \quad \text{on }\partial\Omega\setminus\cup_i \partial\Omega_{e_i},
        \\
        \int_{\partial\Omega_{e_i}}\sigma^k\nabla u^{j,k}\cdot\nu\,dS & = -I_i^{j,k} &  & \quad \text{for }i=1,\dots,\Nelec
        \text{ and }j=1,\ldots,\Npat.
    \end{alignat}
\end{subequations}
Here $\partial\Omega_{e_i}$ are the electrode surfaces, $\zeta_i>0$ are contact impedances, and $\nu$ is the outward unit normal.
This potential-to-current formulation reflects modern EIT measurement devices \cite{jauhiainen2020relaxed}.
In practice, only noisy measurements $\EITmeas^{j,k}$ of these currents are available, and the inverse problem is to recover $\sigma^k$ from the measurements $\EITmeas^{j,k}$.
For a single time index $k$, this is commonly formulated as the total variation regularised optimisation problem
\begin{equation}
    \label{eq:numerical:static}
    \min_{\sigma \in \BVspace(\Omega) \isect L^\infty(\Omega)} \frac12\sum_{j=1}^{\Npat}
    \norm{\WOp ( I(\sigma,U^{j,k}) - \EITmeas^{j,k}_\delta)}_2^2 + \delta_{[\sigma_m,\sigma_M]}(\sigma) + \alpha_\delta\mathop{\mathrm{TV}}(\sigma).
\end{equation}
where $0<\sigma_m<\sigma_M$ enforce physical bounds, $\WOp$ is a data precision matrix, and $\mathop{\mathrm{TV}}$ promotes structured sparsity.
Similar formulations have been studied in \cite{voss2018imaging,voss2019three,jauhiainen2021nonplanar}.
In the dynamic framework of \cref{sec:dynamic-regtheory}, following \cref{ex:reg:simple}, we would in the spaces $\Xto{N}=(\BVspace(\Omega) \isect L^\infty(\Omega))^{N+1}$ and $\Yto{N}=(\R^{\Nelec \times \Npat})^{N+1}$ take
\begin{gather*}
    \ellto{N}(\yto{N}) = \sum_{k=0}^{N-1} \norm{\WOp\ytoind{N}{k}}_2^2,
    \quad
    \bto{N}{\delta} = (b_{0,\delta},\ldots,b_{N,\delta}),
    \quad
    b_{k,\delta}=(\EITmeas^{1,k}_\delta,\ldots,\EITmeas^{\Nelec,k}_\delta)
    \shortintertext{and}
    \Ato{N}(\xto{N}) = (A_0(\xtoind{N}{0}), \ldots, A_N(\xtoind{N}{N}))
    \quad\text{for}\quad
    A_k(\xtoind{N}{k}) =  (I(\xtoind{N}{1}, U^{1,k}), \ldots, I(\xtoind{N}{N}, U^{\Nelec,k})).
\end{gather*}
The spatiotemporal regulariser $\Rto{N}$ arises, as in \cref{ex:reg:r,ex:subreg:r-improvement}, as the temporal infimal convolution of the box constraints and total variation of \eqref{eq:numerical:static}, with the algorithm-dependent dual temporal dynamics.
The online algorithms of \cite{online-tracking,tuomov2024online-eit} satisfy $\xsolto{N}{\delta}=(x_{0,\delta}, \ldots, x_{N,\delta})$, i.e., the iterates for past time indices are not updated afterwards. Each $\sigma=x_{k,\delta}$ is produced exclusively from $b_{k,\delta}$ and an initial prediction of its value formed using the past iterates, as we explain in \cref{sec:numerical:algorithm}.

\subsection{Experimental setup}
\label{sec:numerical:scenarios}

\def\illustration#1{
    \def\mesh{Disk16}
    \def\motion{#1}

    \def\choseniters{
        \raisebox{-.5\height}{\includegraphics[width=0.12\linewidth]{\pfx_1.png}}
        &
        \raisebox{-.5\height}{\includegraphics[width=0.12\linewidth]{\pfx_400.png}}
        &
        \raisebox{-.5\height}{\includegraphics[width=0.12\linewidth]{\pfx_800.png}}
        &
        \raisebox{-.5\height}{\includegraphics[width=0.12\linewidth]{\pfx_1200.png}}
        &
        \raisebox{-.5\height}{\includegraphics[width=0.12\linewidth]{\pfx_1600.png}}
        &
        \raisebox{-.5\height}{\includegraphics[width=0.12\linewidth]{\pfx_2000.png}}
    }

    \begin{figure}
        \centering
        \setlength{\tabcolsep}{0pt}
        \begin{tabular}{lcccccc}
            Frame               & 1 & 1200 & 2300 & 3400 & 4300 & 7000 onwards
            \\
            Rescaled time index & 0 & 0.2  & 0.4  & 0.6  & 0.8  & 1.0
            \\
            \hline
            Ground truth
                                &
            \gdef\pfx{Results/Disk16/DisappearingMotion0.1/true/conductivity}%
            \choseniters%
        \end{tabular}
        \caption{Illustration of the experimental data.}
        \label{fig:illustration}
    \end{figure}

}

We perform our experiments on synthetic data in a disk-shaped domain $\Omega$ equipped with $\Nelec = 16$ equally spaced boundary electrodes.
The CEM is solved using a finite element approximation of the electrode potentials and the conductivity, both represented with piecewise linear basis functions.
For each time instance, current injection is applied sequentially at a single electrode while all remaining electrodes are grounded, resulting in $\Npat = 16$ excitation patterns. As is standard in EIT, the current measured at the excited electrode is excluded from the data, yielding $(\Nelec - 1)\Npat = 240$ measurements per frame.

For the movement of the inclusion, we limit our attention the following test scenario from \cite{tuomov2024online-eit}:
\begin{description}

    \item[Disappearing Inclusions]
          Two inclusions move at uniform speed along circular paths. One inclusion disappears at time index $1/4$ (near frame 1500) and the other at time index 2/4 (near frame 2800); both reappear at time index 3/4 (near frame 4200). This scenario violates the incompressibility assumption.
\end{description}
We make the following important changes:
\begin{enumerate}
    \item
          We use the same discretisation (5039 nodes) for both the forward simulations and the inverse problem. Although this would typically be regarded as an “inverse crime”, here it is a deliberate modelling choice: it eliminates discretisation-induced modelling error, allowing us to study the effect of measurement noise in isolation and, in particular, to consider the regime $\delta \downto 0$.
    \item
          We gradually slow down the originally uniform-speed motion of the inclusions for increasing time index $t = \min\{k/7000, 1\}$ by rescaling it to $s(t) \defeq t + t^2 - t^3$. This satisfies $s(0)=0$, $s(1)=1$, $s'(0)=1$, and $s'(1)=0$. Therefore, initially, the inclusions start at the same speed as in \cite{tuomov2024online-eit}, but---to allow evaluating the predictions of the regularisation theory---the movement slows to zero towards frame $t=7000$, after which, there is no movement.
          The total number of data frames is $N=10000$.
\end{enumerate}
We illustrate the ground-truth data in \cref{fig:illustration}.
To the ground-truth data, in our experiments, we add various levels of Gaussian noise $\delta$.

The regularisation parameter $\alpha_\delta$ is chosen heuristically. Based on a manual fit to empirically optimal values observed across a range of noise levels, we set $\alpha_\delta=1+\log_{10}(\delta)/10$.
This choice is intended only for the numerically relevant range of noise levels considered here and is not derived from theoretical principles; in particular, it is not suitable for $\delta \le 10^{-10}$.
All remaining experimental parameters, as well as the affine prediction scheme, are adopted from \cite{tuomov2024online-eit}.

\subsection{Algorithm and results}
\label{sec:numerical:algorithm}

For the numerical algorithm, we use the “exact” variant of the online primal-dual method of our companion paper \cite{online-tracking} with the step length and other parameters described therein, andn the “affine scaling” dual predictor from \cite{tuomov-better-predict}.
This variant of the method, on each data frame $k$:
\begin{enumerate}
    \item Uses an optical flow predictor to predict the the primal iterate (conductivity) $\sigma^k$ to form the prediction $\breve\sigma^{k+1}$,
    \item Uses an “affine scaling” predictor (see \cite{tuomov2024online-eit}) to likewise form a prediction $\breve y^{k+1}$ from the dual iterate $y^k$. The dual iterates correspond to the variables that give rise to the total variation as a supremum over the dual ball; compare \cref{ex:subreg:r-improvement}.
    \item Solves both the PDE and its adjoint exactly to form $I(\sigma^k, U^{j,k})$. (This is what the “exact” refers to. In \cite{online-tracking}, also inexact PDE solves are studied.)
    \item Takes \emph{only one} primal-dual proximal splitting step from the prediction $(\breve\sigma^{k+1}, \breve y^{k+1})$ to form the next primal and dual iterates $(\sigma^{k+1}, y^{k+1})$.
\end{enumerate}
The predictions, to transfer iterates from one time from to the next one, along with the inexact (single-step) solve of the frame-wise optimisation problem in the final step, introduce the errors $e_{k,\delta}$ in \eqref{eq:reg:general:accuracy}.

Using the above experimental setup,  we can now, in \cref{fig:reg}, illustrate the claim of \cref{thm:reg:strong} on the convergence of the algorithmic solutions to a (minimum norm) ground truth as the noise level $\delta \downto 0$, given that the errors and regularisation parameter have appropriate limiting behaviour.

\illustration{DisappearingMotion}

\begingroup

\def\doplot#1{
    \def\motion{#1}
    \begin{tikzpicture}
        \begin{axis}[
                width=0.75\linewidth,
                height=0.35\linewidth,
                grid=major,
                xlabel={Frame},
                ylabel={$\frac{1}{N+1} \sum_{t=0}^N \norm{\hat x^N_t- x_t}^2$},
                ymode=log,
                scaled x ticks=false,
                xminorticks=true,
                yminorticks=true,
                axis x line*=bottom,
                axis y line*=left,
                legend style={
                        legend pos=north east,
                        inner sep=1pt,
                        outer sep=1pt,
                        draw=none,
                        fill=none,
                        font=\scriptsize,
                        at={(1.02,1)},
                        anchor=north west,
                    },
                line width = 1pt,
                colormap/PuBu,
                cycle list = {[colors of colormap={400,550,...,1000}]},
                log ticks with fixed point,
                ytick={10, 5, 1, 0.5, 0.1, 0.05, 0.01},
                ymax=0.5,
                xmin=1,
                xmax=10000,
            ]

            \foreach \del in {0.100, 0.050, 0.010, 0.005}{
                    \addplot
                    table [x=iter, y=gt_cum_avg_sqerror, col sep=comma]
                        {RegResults/\mesh/\motion_noise\del000_logalpha_polyspeed7000_lowres/exact\extra_\pred_\steplengths/timings.txt};
                    \addlegendentryexpanded{$\delta = \del$}
                }

        \end{axis}
    \end{tikzpicture}
}

\def\plotall#1{
    \gdef\pred{#1}

    \begin{figure}
        \centering
        \doplot{DisappearingMotion}%
        \caption{Illustration of asymptotic regularisation behaviour for various noise levels $\delta$ and regularisation parameter $\alpha=1 + \log_{10}(\delta)/10$.}
        \label{fig:reg}
    \end{figure}

}

\def\mesh{Disk16}
\def\steplengths{sigma10.0_tau0.0053}
\def\extra{_fixedstep}

\plotall{PDP2}

\endgroup

\bibliographystyle{jnsao}
\end{document}